\newtheorem{theorem}{Theorem}[section]
\newtheorem{proposition}[theorem]{Proposition}
\newtheorem{lemma}[theorem]{Lemma}
\newtheorem{corollary}[theorem]{Corollary}
\theoremstyle{definition}
\newtheorem{remark}[theorem]{Remark}
\newtheorem{example}[theorem]{Example}
\newtheorem{definition}[theorem]{Definition}
\numberwithin{equation}{section}
\definecolor{myblue}{RGB}{0,0,128}
\newcommand{\LM}[1]{\hbox{\vrule width.2pt \vbox to#1pt{\vfill \hrule
width#1pt
height.2pt}}}
\newcommand{\LL}{{\mathchoice {\>\LM7\>}{\>\LM7\>}{\,\LM5\,}{\,\LM{3.35}\,}}}
\def\XXint#1#2#3{{\setbox0=\hbox{$#1{#2#3}{\int}$ }
		\vcenter{\hbox{$#2#3$ }}\kern-.57\wd0}}
\newcommand{\oset}[3][0ex]{%
  \mathrel{\mathop{#3}\limits^{
    \vbox to#1{\kern-.75\ex@
    \hbox{$\scriptstyle#2$}\vss}}}}
\newcommand{\wcont}{\subset\subset}
\newcommand{\sm}{\setminus}
\newcommand{\dx}{\,\mathrm{d}x}
\newcommand{\e}{\varepsilon}
\newcommand{\dist}{{\rm{dist}}}
\newcommand{\w}{\omega}
\newcommand{\R}{\mathbb{R}}
\newcommand{\Z}{\mathbb{Z}}
\newcommand{\ZZ}{\mathcal{Z}}
\newcommand{\N}{\mathbb{N}}
\newcommand{\Q}{\mathbb{Q}}
\newcommand{\Sph}{\mathbb{S}}
\newcommand{\Sd}{\Sph^{d-1}}
\newcommand{\M}{\mathcal{M}}
\newcommand{\Hd}{\mathcal{H}^{d-1}}
\newcommand{\dHd}{\,\mathrm{d}\Hd}
\newcommand{\dH}{\,\mathrm{d}\mathcal{H}}
\newcommand{\A}{\mathcal{A}}
\newcommand{\m}{\mathbf{m}}
\newcommand{\Adm}{\mathrm{Adm}}
\newcommand{\F}{\mathcal{F}}
\newcommand{\I}{\mathcal{I}}
\renewcommand{\P}{\mathbb{P}}
\newcommand{\rb}{\partial^*\!}
\newcommand{\Eg}{E_1}
\newcommand{\Egw}{\Eg(\omega)}
\newcommand{\Hn}{H^\nu}
\newcommand{\Hnx}{\Hn(x_0)}
\newcommand{\Qn}{Q^\nu}
\newcommand{\Qnr}{\Qn_\rho}
\newcommand{\Qnxr}{\Qnr(x_0)}
\newcommand{\Br}{B_\rho}
\newcommand{\Brx}{\Br(x_0)}
\newcommand{\uzn}{u_0^{a,b,\nu}}
\newcommand{\ie}{\,{i.e.}, }
\newcommand{\defas}{:=}
\newcommand{\dsum}[2]{\sum_{\begin{smallmatrix}#1\\#2\end{smallmatrix}}}
\begin{document}
\author{Annika Bach}
\address[Annika Bach]{Dipartimento di Matematica ``Guido Castelnuovo'', Sapienza Universit\`{a} di Roma, Piazzale Aldo Moro 2,
00185 Roma, Italy}
\email{annika.bach@uniroma1.it}

\author{Matthias Ruf}
\address[Matthias Ruf]{Section de mathématiques, Ecole Polytechnique Fédérale de Lausanne, Station 8, 1015 Lausanne, Switzerland}
\email{matthias.ruf@epfl.ch}

\title[Stochastic homogenization of partitions]{Stochastic homogenization of functionals defined on finite partitions}

\begin{abstract}
We prove a stochastic homogenization result for integral functionals defined on finite partitions assuming the surface tension to be stationary and possibly ergodic. We also consider the convergence of boundary value problems when we impose a boundary value just on part of the boundary. As a consequence, we show that if the homogenized surface tension is isotropic, then one can obtain it by a multi-cell problem where Dirichlet boundary conditions are imposed only at the bottom and the top of the cube. We also show that this result fails in the general anisotropic case.
\end{abstract}

\maketitle
{\small
	\noindent\keywords{\textbf{Keywords:} Finite partitions, stochastic homogenization, partial boundary conditions}
	
	\noindent\subjclass{\textbf{MSC 2020:} 49J55, 49Q20, 49J45, 60G10}
}	


\section{Introduction}
Stochastic homogenization deals with the asymptotic description of models in a random environment that features oscillations on a micro-scale. As the micro-scale tends to zero, the goal is to find an effective model where the oscillations are replaced by a suitably averaged model (usually not the mean value, but a nonlinear average due to relaxation effects taking into account the micro-structure). Typical examples are PDEs with random coefficients or integral functionals with random integrands (see the monographs \cite{AKM_book,JKO_Hom} and references therein). In recent years, there has been a growing interest in the qualitative (i.e., without explicit error estimates) stochastic homogenization of variational functionals beyond the Sobolev setting: discrete-to-continuum limits for spin systems leading to interfaces \cite{ACR,BP,BP20}, Modica-Mortola-type functionals \cite{Marziani,Morfe}, free discontinuity problems \cite{CDMSZ19} and their approximations \cite{BCR19,BMZ21,R17}, $SBV$-functionals with linear growth \cite{CDMSZ22}, just to name a few. In \cite{BaRu22} we derived first quantitative estimates for the stochastic homogenization of functionals defined on finite partitions, i.e., functions of bounded variation taking values in a finite set. However, the qualitative homogenization result for this class of functionals has been derived only in the periodic setting in \cite{AmBrII}, the extension of this result to the random stationary setting instead was still missing. In this note we fill this gap. In particular, we prove a slightly more general version of \cite[Theorem 0.1]{BaRu22}, which we  stated there without including the proof. More precisely, given an open bounded Lipschitz set $D\subset\R^d$ and a parameter $\e>0$, we consider random heterogeneous interfacial energies of the form
\begin{equation}\label{intro:def-energy}
E_{\e}(\omega)(u)=\int_{D\cap S_u}g(\w,\tfrac{x}{\e},u^+,u^-,\nu_u)\dHd,
\end{equation}
where $u:D\to\M$ is a function of bounded variation taking values in a finite set $\M\subset\R^m$, $S_u$ is the set of approximate jump points of $u$, and $u^\pm$, $\nu_u$ are the traces of $u$ on $S_u$ and the measure theoretic normal to $S_u$, respectively. Moreover, $g:\Omega\times\R^d\times\M\times\M\times\Sd\to[0,+\infty)$ is a random surface tension depending on the outcome $\omega$ belonging to the sample space $\Omega$ of an underlying probability space $(\Omega,\F,\P)$. Assuming that $g$ is stationary and almost surely bounded from above and from below by deterministic constants (see Definition~\ref{defadmissible}) we show that for $\P$-almost all $\omega\in\Omega$ the functionals $E_\e(\omega)$ $\Gamma$-converge to a random homogeneous energy, given for $u\in BV(D;\M)$ by
\begin{equation}\label{intro:gamma-limit}
E_{\rm hom}(\omega)(u)=\int_{S\cap S_u}g_{\rm hom}(\omega,u^+,u^-,\nu_u)\dHd.
\end{equation}
For $(a,b,\nu)\in\M\times\M\times\Sd$ the integrand $g_{\rm hom}(\omega,a,b,\nu)$ is characterized via a multi-cell formula which is common in homogenization theory and involves the minimization of the energy at scale $\e=1$ on cubes $Q_t^\nu$ oriented with $\nu$ and with diverging side length $t$ under suitable boundary conditions. More precisely,
\begin{equation}\label{intro:ghom}
g_{\rm hom}(\omega,a,b,\nu)=\lim_{t\to+\infty}t^{1-d}\inf \big\{E_1(\omega)(u,Q_t^\nu)\colon u\in BV(Q_t^\nu;\M),\ u=u^{a,b,\nu}\ \text{near}\ \partial Q_t^\nu\big\}
\end{equation} 
where $u^{a,b,\nu}=a\mathds{1}_{\{x\cdot\nu>0\}}+b\mathds{1}_{\{x\cdot\nu<0\}}$ is the jump between the values $a$ and $b$ across the hyperplane orthogonal to $\nu$ and passing through the origin.

\smallskip
The present result in particular fills the gap between the qualitative $\Gamma$-convergence result \cite{AmBrII} in the periodic framework and the quantitative estimates derived in~\cite{BaRu22} on the level of the multi-cell formula defining $g_{\rm hom}$ in the stochastic framework. The result is slightly more general than the one announced in~\cite[Theorem 0.1]{BaRu22} as it allows for a general dependence of $g$ on the traces $u^\pm$ and not only on their difference $u^+-u^-$. This assumption was made in~\cite{BaRu22} for technical reasons that we will briefly comment on below while explaining the method of proof used to obtain the $\Gamma$-convergence result.

\smallskip
The proof of the result is split into a deterministic and a stochastic part, where the latter one deals with showing that the limit defining $g_{\rm hom}$ in~\eqref{intro:ghom} exists almost surely. This is done by rewriting the infimum problem in~\eqref{intro:ghom} in a suitable way as a subadditive stochastic process and by applying the subadditive ergodic theorem~\cite[Theorem 2.8]{AkKr}. To this end, it is necessary to show that the function that maps $\omega\in\Omega$ to the infimum value in~\eqref{intro:ghom} is measurable with respect to the $\sigma$-algebra $\F$. In the case where $g$ depends on $u^\pm$ only through their difference $u^+-u^-$, this follows from~\cite[Proposition A.1]{CDMSZ19}. In the present paper we prove the measurability by reducing to the case, where the elements of $\M$ are the standard orthonormal basis, so that the mapping $(u^+-u^-)\mapsto (u^+,u^-)$ is well defined. Note that the possibility of associating a subadditive stochastic process to the infimum problem in~\eqref{intro:ghom} is also related to the fact that the cubes $Q_t^\nu$ are concentric. To prove the full homogenization result, an additional probabilistic  step is required, where one shows that $g_{\rm hom}(\omega,a,b,\nu)$ coincides with the limit
\begin{equation}\label{intro:limitx0}
\lim_{t\to+\infty}t^{1-d}\inf \big\{E_1(\omega)(u,Q_t^\nu(t x_0)\colon u\in BV(Q_t^\nu(t x_0);\M),\ u=u_{t x_0}^{a,b,\nu}\ \text{near}\ \partial Q_t^\nu(t x_0)\big\}
\end{equation}
for every $x_0\in\R^d$, where the cubes are now centered at $t x_0$ and $u_{t x_0}^{a,b,\nu}$ correspondingly jumps across the hyperplane orthogonal to $\nu$ and passing through $t x_0$.

Based on the existence and equality of the two limits in~\eqref{intro:ghom} and~\eqref{intro:limitx0}, the $\Gamma$-convergence result can be obtained with completely deterministic arguments by applying the blow-up method together with the fundamental estimate~\cite[Lemma 4.4]{AmBrI} to establish the liminf-inequality and the density result in~\cite{BCG} to reduce the limsup-inequality to polyhedral partitions. We emphasize that the explicit construction of the recovery sequence for polyhedral partitions is still quite technical due to the presence of more than two phases. While in \cite{BaRu22} we indicated a proof of the homogenization result using integral representation techniques \cite[Theorem 3]{BFLM}, thus following closely the strategy of \cite{ACR,BrCi,BCR,CDMSZ19}, here we decided to take the more direct approach as it has the chance to be applicable also in the setting of degenerate growth conditions; cf. \cite{DG,RR23,RZ23} for recent results in the Sobolev setting. We plan to include this analysis in future work.

\smallskip
Eventually, we also prove a $\Gamma$-convergence result for the functionals $E_\e(\omega)$ in presence of Dirichlet boundary conditions on part of $\partial D$. As a consequence of this we are able to show that in the isotropic case, i.e., when $g_{\rm hom}$ does not depend on $\nu$, the integrand $g_{\rm hom}(\omega,a,b)$ can be equivalently characterized by a limit of minimization problems as in~\eqref{intro:ghom}, where now the boundary conditions are only prescribed on the upper and lower facets $\{x\cdot\nu=\pm t/2\}$ of $Q_t^\nu$ (see Corollary~\ref{cor:limit-alternativ-process}). We also show that this is in general not true in the anisotropic case (see Example~\ref{ex:failure_top_bottom}).

%
%
%
%

\section{Setting of the problem and statement of the main results}\label{s.setting-result}
\subsection{General notation}
We first introduce some notation that will be used in this paper. Given a measurable set $A\subset\R^d$ we denote by $|A|$ its $d$-dimensional Lebesgue measure, and by $\mathcal{H}^{k}(A)$ its $k$-dimensional Hausdorff measure. Moreover, $\mathscr{A}$ is the family of open, bounded subsets of $\R^d$ with Lipschitz boundary.
For $x,y\in\R^d$ we denote by $x\cdot y$ the scalar product between $x$ and $y$, $|x|$ is the Euclidean norm, and $\Brx$ denotes the open ball with radius $\rho>0$ centered at $x_0\in\R^d$. If $x_0=0$ we simply write $\Br$.
Given $x_0\in\R^d$ and $\nu\in\Sph^{d-1}$, we let $\Hnx$ be the hyperplane orthogonal to $\nu$ and passing through $x_0$ and for every $(a,b)\in\R^m\times\R^m$, the piecewise constant function taking values $a,b$ and jumping across $\Hnx$ is denoted by $u_{x_0}^{a,b,\nu}:\R^{d}\to\R^{m}$,\ie
\begin{equation}\label{eq:purejump}
u_{x_0}^{a,b,\nu}(x):=\begin{cases} a &\mbox{if $(x-x_0)\cdot\nu>0$,}
\\
b &\mbox{otherwise.}
\end{cases}
\end{equation}
If $x_0=0$ we write $\Hn$ in place of $H^\nu(0)$.
Let $\{e_1,\ldots,e_d\}$ be the standard basis of $\R^d$. Then $O_{\nu}$ is the orthogonal matrix induced by the linear mapping
\begin{equation}\label{eq:matrix}
x\mapsto \begin{cases}
\displaystyle{2\frac{(x\cdot\nu+e_d)}{|\nu+e_d|^2}(\nu+e_d)-x} &\mbox{if $\nu\in \mathbb{S}^{d-1}\setminus\{-e_d\}$,}
\\
-x &\mbox{otherwise.}
\end{cases}
\end{equation}
In this way, $O_\nu e_d=\nu$ and the set $\{\nu_j\defas O_\nu e_j\colon j=1,\ldots, d-1\}$ is an orthonormal basis for $\Hn$. 
Setting $\nu_d=\nu$, we define the cube $\Qn$ as
\begin{equation}\label{eq:defcube}
\Qn=\left\{x\in\R^d\colon |x\cdot\nu_j| <1/2\text{ for }j=1,\ldots,d\right\},
\end{equation}
and we set $\Qnxr=x_0+\rho \Qn$. 
%
%
%

Finally, the letter $C$ stands for a generic positive constant that may change every time it appears.
\subsection{BV-functions and finite partitions}\label{s.BV}
In this section we recall basic facts about functions of bounded variation. For more details we refer to the monograph \cite{AFP}. Let $U\subset\R^d$ be an open set. A function $u\in L^1(U;\R^m)$ is a function of bounded variation if its distributional derivative $D u$ is given by a finite matrix-valued Radon measure on $U$. In that case, we write $u\in BV(U;\R^m)$. The space $BV_{{\rm loc}}(U;\mathbb{R}^m)$ is defined as usual. The space $BV(U;\R^m)$ becomes a Banach space when endowed with the norm $\|u\|_{BV(U)}=\|u\|_{L^1(U)}+|D u|(U)$, where $|D u|$ denotes the total variation measure of $D u$. When $U\in\mathscr{A}$, then $BV(U;\R^m)$ is compactly embedded in $L^1(U;\R^m)$. 

\smallskip
Let us state some fine properties of $BV$-functions. To this end, we need some definitions. A function $u\in L^1(U;\mathbb{R}^m)$ is said to have an approximate limit at $x\in U$ whenever there exists $z\in\mathbb{R}^m$ such that
\begin{equation*}
	\lim_{\rho\to 0}\frac{1}{\rho^d}\int_{B_{\rho}(x)}|u(y)-z|\,\mathrm{d}y=0\,.
\end{equation*}
Next we introduce so-called approximate jump points. Given $x\in U$ and $\nu\in \mathbb{S}^{d-1}$ we set
\begin{equation}\label{def:Qpm}
	Q_\rho^{\nu,\pm}(x)\defas\{y\in Q_\rho^\nu(x):\;\pm (y-x) \cdot \nu >0\}\,.
\end{equation}
We say that $x\in U$ is an approximate jump point of $u$ if there exist $a\neq b\in\mathbb{R}^n$ and $\nu\in \mathbb{S}^{d-1}$ such that
\begin{equation}\label{def:jump-point}
	\lim_{\rho\to 0}\frac{1}{\rho^d}\int_{Q_\rho^{\nu,+}(x)}|u(y)-a|\,\mathrm{d}y=\lim_{\rho\to 0}\frac{1}{\rho^d}\int_{Q_\rho^{\nu,-}(x)}|u(y)-b|\,\mathrm{d}y=0 \, .
\end{equation}
The triplet $(a,b,\nu)$ for such $x\in U$ is determined uniquely up to the change to $(b,a,-\nu)$ and denoted by $(u^+(x),u^-(x),\nu_u(x))$. We let $S_u$ be the set of approximate jump points of $u$. The triplet $(u^+,u^-,\nu_u)$ can be chosen as a Borel function on the countably $\mathcal{H}^{d-1}$-rectifiable Borel set $S_u$. 

\smallskip
We will also make use of the slicing properties of $BV$-functions (cf. \cite[Section 3.11]{AFP}). Let $\nu \in \mathbb{S}^{d-1}$. For any Borel set $E\subset U$ and every $y \in H^{\nu}$, the section of $E$ corresponding to $y$ is the set $E^\nu_y := \{ t \in \R \ : \ y + t \nu \in E \}$. Accordingly, for any function $u \colon U \to \R^m$, the function $u^\nu_y \colon U^\nu_y \to \R^m$ is defined by $u^\nu_y(t) := u(y + t \nu)$. For $u \in L^1(U; \R^m)$ we have $u \in BV(U; \R^m)$ if and only if for every $\nu \in \mathbb{S}^{d-1}$ we have $u^\nu_y \in BV(U^\nu_y;\R^m)$ for $\mathcal{H}^{d-1}$-a.e.\ $y \in H^\nu$ and 
\[
\int_{H^\nu}{|D u^\nu_y|(U^\nu_y)}{ \dHd(y)} < +\infty \, .
\]
It is possible to reconstruct the distributional gradient $D u$ from the gradients of the slices $D u^\nu_y$ through the formula
\[
D u \, \nu(B) = \int_{H^\nu}{D u^\nu_y(B^\nu_y)}{\dHd(z)} \, ,
\]
for every Borel set $B \subset U$. Moreover, $S_{u^\nu_y} = (S_u)^\nu_y$ for $\mathcal{H}^{d-1}$-a.e.\ $y \in H^\nu$ and $(u^\nu_y)^{\pm}(t) = (u^\pm)^\nu_y(t)$  ($ = (u^\mp)^\nu_y(t)$, respectively) for every $t \in (S_u)^\nu_y$ if $\nu \cdot \nu_u(y+t\xi) > 0$ (if $\nu \cdot \nu_u(y+t\xi) < 0$, respectively).

\medskip
The relevant function space in this paper is the set of finite partitions,\ie the space of functions of bounded variation taking only finitely many values. Given a finite set $\M=\{a_1,\ldots,a_K\}\subset\R^m$, we set $BV(U;\M)\defas\{u\in BV(U;\R^m)\colon u(x)\in\M\text{ a.e.\! in }U\}$. In this case, the measure $Du(B)$ of a Borel set $B\subset U$ can be represented as 
\begin{equation*}
Du(B)=\int_{B\cap S_u}(u^+(x)-u^-(x))\otimes\nu_u(x)\dHd.	
\end{equation*}
Moreover, it follows from the coarea formula that each level set $A_i\defas\{u=a_i\}$ has finite perimeter in $U$, \ie $\mathds{1}_{A_i}\in BV(U;\{0,1\})$. Eventually, we recall that the reduced boundary $\rb A_i$ of $A_i$ is the set of all points $x\in{\rm supp}|D_{\mathds{1}_{A_i}}|$ for which the limit $\nu_{A_i}(x)\defas\lim_{\rho\to 0}\frac{D_{\mathds{1}_{A_i}}(B_\rho(x))}{|D_{\mathds{1}_{A_i}}|(B_\rho(x))}$ exists and satisfies $|\nu_{A_i}(x)|=1$. 
It holds that
	\begin{equation*}
	U\cap S_u=U\cap \bigcup_{i=1}^K\bigcup_{j\neq i}\rb A_i\cap\rb A_j
	\end{equation*}
up to an $\Hd$-negligible set and $(u^+,u^-,\nu_u)=(a_i,a_j,\nu_{A_i})$ $\Hd$-a.e.\ on $\rb A_i\cap\rb A_j$ (see~\cite[Proposition 5.9]{AFP}).
\subsection{Ergodic theory}
In this section, we recall some basic notions from probability theory. Throughout this paper $(\Omega,\F,\P)$ denotes a complete probability space.
We start by defining measure-preserving group actions.
\begin{definition}[Measure-preserving group action]\label{def:group-action} Let $k\in \N$, $k\geq 1$. A \emph{discrete, measure-preserving, additive group action} on $(\Omega,\F,\P)$  is a family $\{\tau_z\}_{z\in\Z^k}$ of mappings $\tau_z:\Omega\to\Omega$ satisfying the following properties:
	\begin{enumerate}[label=(\arabic*)]
		\item\label{meas} (measurability) $\tau_z$ is $\F$-measurable for every $z\in\Z^k$;
		\item\label{inv} (invariance) $\P(\tau_z A)=\P(A)$, for every $A\in\F$ and every $z\in\Z^k$;
		\item\label{group} (group property) $\tau_0=\rm id_\Omega$ and $\tau_{z_1+z_2}=\tau_{z_2}\circ\tau_{z_1}$ for every $z_1,z_2\in\Z^k$.
	\end{enumerate}
	If, in addition, $\{\tau_z\}_{z\in\Z^k}$ satisfies the implication
	\begin{equation*}
		\mathbb{P}(\tau_zA\Delta A)=0\quad\forall\, z\in\Z^k\implies \mathbb{P}(A)\in\{0,1\},
	\end{equation*}
	then it is called ergodic.
\end{definition}
The following is known as Birkhoff's additive ergodic theorem:
\begin{theorem}\label{thm:Birkhoff}
Let $f\in L^1(\Omega)$ and $\{\tau_z\}_{z\in\Z^k}$ be a discrete, measure-preserving group action. Then for $\mathbb{P}$-a.e. $\w\in\Omega$ and every $z\in\Z^k$ we have	
\begin{equation*}
	\sum_{i\in\N}f(\tau_{iz}\w)=\mathbb{E}[f|\mathcal{J}_{z}](\w),
\end{equation*}
where $\mathcal{J}_z$ is the $\sigma$-algebra of $\tau_{z}$-invariant sets, i.e., $\mathcal{J}_z=\{A\in\mathcal{F}:\,\mathbb{P}(\tau_z A\Delta A)=0\}$, and $\mathbb{E}[f|\mathcal{J}_z]$ denotes the conditional expectation of $f$ with respect to $\mathcal{J}_z$, i.e., the (up to null sets uniquely defined) $\mathcal{J}_z$-measurable function $h:\Omega\to\R$ such that $\mathbb{E}[\mathds{1}_Af]=\mathbb{E}[\mathds{1}_A h]$ for all $A\in\mathcal{J}_z$.
\end{theorem}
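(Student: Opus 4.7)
The plan is to reduce the multi-parameter assertion to the classical single-parameter Birkhoff ergodic theorem. For each fixed $z\in\Z^k$, the measurability, invariance, and group properties in Definition~\ref{def:group-action} imply that $T\defas\tau_z:\Omega\to\Omega$ is an $\F$-measurable, measure-preserving transformation, with iterates $T^n=\tau_{nz}$, and $\J_z$ coincides by construction with the $\sigma$-algebra of $T$-invariant sets. Hence the statement (interpreted as the Cesàro convergence
\begin{equation*}
\frac{1}{n}\sum_{i=0}^{n-1}f(T^i\w)\longrightarrow \E[f|\J_T](\w)\quad\text{for }\P\text{-a.e.\ }\w\in\Omega
\end{equation*}
as $n\to\infty$) reduces to the one-parameter case.

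For the one-parameter statement I would follow the classical route via the maximal ergodic inequality. Setting $S_n f(\w)\defas\sum_{i=0}^{n-1}f(T^i\w)$ and $M_n f(\w)\defas\max_{1\le k\le n}S_k f(\w)$, the key lemma is the maximal inequality $\int_{\{M_n f>0\}}f\,\mathrm{d}\P\ge 0$, derived by a short telescoping argument from the identity $S_{k+1}f(\w)=f(\w)+S_k f(T\w)$ together with $T$-invariance of $\P$. Applied to $f-\E[f|\J_T]\mp\e$ for arbitrary $\e>0$, this bound controls the $\limsup$ and $\liminf$ of the Birkhoff averages from above and below by $\E[f|\J_T]\pm\e$ outside a set of vanishing probability; sending $\e\to 0$ through a countable sequence yields the $\P$-almost sure convergence.

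An alternative route, perhaps more natural in the present context since the paper invokes the subadditive ergodic theorem \cite[Theorem 2.8]{AkKr} elsewhere, is to apply that result directly to the (additive, hence subadditive) process $X_n(\w)\defas S_n f(\w)$: the almost-sure limit is then identified with $\E[f|\J_T]$ via dominated convergence restricted to events in $\J_T$. The main obstacle in either approach is the maximal inequality, respectively the one-sided superadditivity bound lying at the heart of Kingman's theorem; once either is available, the remainder is a routine approximation argument exploiting density of bounded functions in $L^1(\Omega)$ and $L^1$-continuity of conditional expectation. No additional complication arises from the $\Z^k$-action, since the conclusion is only claimed for each fixed $z$ separately, so the exceptional null set is allowed to depend on $z$.
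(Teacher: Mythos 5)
The paper does not prove Theorem~\ref{thm:Birkhoff}: it is recalled as the classical Birkhoff ergodic theorem and used as a black box (in Step~4 of the proof of Proposition~\ref{prop:existence-limit}), so there is no ``paper's own proof'' to compare against. Your outline is a correct reduction to the one-parameter case followed by the standard maximal-inequality argument, and you rightly read the displayed identity as Cesàro convergence
\begin{equation*}
\lim_{n\to\infty}\frac{1}{n}\sum_{i=0}^{n-1}f(\tau_{iz}\omega)=\E[f\mid\mathcal{J}_z](\omega)\quad\text{for }\P\text{-a.e.\ }\omega,
\end{equation*}
the averaging being missing in the paper's display (an evident typo). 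One small wrinkle: the theorem as stated asks for ``$\P$-a.e.\ $\omega$ and every $z$'', i.e.\ a null set that is uniform over $z\in\Z^k$, whereas your proof a priori produces one null set per $z$; since $\Z^k$ is countable one simply takes the union, so this is harmless but worth making explicit rather than dismissing. Your alternative Kingman route via \cite[Theorem~2.8]{AkKr} is also sound and is tidy here given that the paper already invokes that theorem; note however that Kingman gives almost-sure convergence to a $\mathcal{J}_z$-measurable limit, and identifying that limit with $\E[f\mid\mathcal{J}_z]$ still needs the $L^1$-argument (integrate against $\mathds{1}_A$ for $A\in\mathcal{J}_z$ and use dominated convergence after truncation), which you correctly flag as the remaining step.
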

We further recall here the basics of subadditive stochastic processes tailored to our analysis on $d-1$-dimensional subspaces:

For every $p=(p_1,\dots,p_{d-1}),\,q=(q_1,\dots,q_{d-1})\in\R^{d-1}$ with $p_i<q_i$ for all $i\in\{1,\ldots,d-1\}$ we consider the $(d-1)$-dimensional half-open intervals 
\begin{equation*}
	[p,q):=\{x\in\R^{d-1}\colon p_i\le x_i<q_i\,\,\text{for}\,\,i=1,\ldots,d-1\}
\end{equation*}
and we set
\begin{equation*}
	\I:=\{[p,q)\colon p,q\in\R^{d-1}\,,\, p_i<q_i\,\,\text{for}\,\,i=1,\ldots,d-1\}\,.
\end{equation*}
\begin{definition}[Subadditive process]\label{sub_proc}
	A (bounded) subadditive process with respect to a discrete, measure-preserving, additive group action $\{\tau_z\}_{z\in\Z^{d-1}}$ is a function $\mu\colon\I\times\Omega\to\R$ satisfying the following properties:
	\begin{enumerate}[label=(\arabic*)]
		\item\label{subad:meas} (measurability) for every $I\in\I$ the function $\omega\mapsto\mu(I,\omega)$ is $\F$-measurable;
		\item\label{subad:cov} (stationarity) for a.e. $\omega\in\Omega$, $I\in\I$, and $z\in\Z^{d-1}$ we have $\mu(I+z,\omega)=\mu(I,\tau_z(\omega))$;
		\item\label{subad:sub} (subadditivity) for a.e. $\w\in\Omega$, every $I\in\I$ and for every finite partition $(I^i)_{i=1}^k$ of $I$, we have
		\begin{equation*}
			\mu(I,\omega)\leq\sum_{i=1}^k\mu(I_i,\omega)\quad\text{for every}\,\,\omega\in\Omega\,;
		\end{equation*}
		\item\label{subad:bound} (boundedness) there exists $M>0$ such that $0\leq\mu(I,\omega)\le M\mathcal{L}^{d-1}(I)$ for a.e. $\omega\in\Omega$ and $I\in\I$.
	\end{enumerate}
\end{definition}
We will use the following version of the subadditive ergodic theorem which is a special case of \cite[Theorem 2.8]{AkKr}.
\begin{theorem}[Subadditive ergodic theorem]\label{thm:subadditive}
Let $\mu:\mathcal{I}\times\Omega\to\R$ be a subadditive stochastic process. Then there exists a $\mathcal{F}$-measurable function $\phi:\Omega\to\R$ such that $\mathbb{P}$-almost surely for every cube $Q$
\begin{equation*}
\lim_{t\to +\infty}\frac{\mu(tQ,\w)}{\mathcal{L}^{d-1}(tQ)}=\phi(\w).
\end{equation*}	
\end{theorem}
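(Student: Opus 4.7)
The plan is to follow the classical proof of Akcoglu–Krengel's multi-parameter subadditive ergodic theorem: identify a candidate limit via a conditional expectation on reference cubes, prove matching $\limsup$ upper and $\liminf$ lower bounds by tiling arguments coupled with Birkhoff's theorem, and finally handle the uniformity in $Q$.

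For the candidate limit, let $\J$ denote the $\sigma$-algebra of $\{\tau_z\}_{z\in\Z^{d-1}}$-invariant sets and define
$$\phi(\w):=\inf_{n\in\N}\frac{\E[\mu([0,n)^{d-1},\cdot)\mid\J](\w)}{n^{d-1}},$$
which by~\ref{subad:bound} is $\F$-measurable and takes values in $[0,M]$. For the upper bound along $t\to\infty$, fix $m\in\N$ and tile $tQ$ by a maximal family of integer-lattice translates $\{mz+[0,m)^{d-1}:z\in Z_t\}\subset tQ$, with a boundary layer of $\L^{d-1}$-measure $O(mt^{d-2})$. Properties~\ref{subad:sub}--\ref{subad:bound} yield $\mu(tQ,\w)\le\sum_{z\in Z_t}f_m(\tau_{mz}\w)+CMmt^{d-2}$, where $f_m(\w):=\mu([0,m)^{d-1},\w)$ after applying stationarity~\ref{subad:cov}. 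Birkhoff's theorem (Theorem~\ref{thm:Birkhoff}) on the sub-lattice action $\{\tau_{mz}\}_{z\in\Z^{d-1}}$ gives $\limsup_t \mu(tQ,\w)/\L^{d-1}(tQ)\le m^{1-d}\E[f_m\mid\J_m](\w)$ $\P$-almost surely; taking the infimum over $m$ and reducing the sub-lattice $\sigma$-algebras $\J_m$ to $\J$ produces the upper bound $\phi(\w)$.

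For the lower bound, reverse the tiling: for large $n\in\N$ cover $[0,n)^{d-1}$ by integer translates $tQ+y_k$ lying inside (arranged by reducing to rational $Q$ and integer $t$, to be removed by approximation afterwards), with uncovered measure $O(tn^{d-2})$. Subadditivity gives $\mu([0,n)^{d-1},\w)\le\sum_k\mu(tQ+y_k,\w)+CMtn^{d-2}$; taking the conditional expectation given $\J$, using stationarity on the summands, and dividing by $n^{d-1}$ yields $\phi(\w)\le\E[\mu(tQ,\cdot)\mid\J](\w)/\L^{d-1}(tQ)+o(1)$ as $n\to\infty$. Letting $t\to\infty$ and combining conditional Fatou with the pointwise upper bound just proved (which is already $\J$-measurable) forces $\liminf_t\mu(tQ,\w)/\L^{d-1}(tQ)=\limsup_t\mu(tQ,\w)/\L^{d-1}(tQ)=\phi(\w)$ $\P$-almost surely.

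Finally, a countable dense family of cubes with rational vertices gives a single full-measure set on which convergence holds for all such $Q$ simultaneously; an arbitrary cube is sandwiched between nearby rational cubes, and subadditivity together with~\ref{subad:bound} squeezes the ratios between their limits, which can be made arbitrarily close. The principal obstacle is the lower bound: subadditivity only yields upper bounds naturally, and matching it from below requires the reverse-tiling argument together with the delicate interplay between the conditional expectation and the $\liminf$, which in turn is unlocked by the upper bound. In the ergodic case $\J=\{\emptyset,\Omega\}\bmod\P$, $\phi$ reduces to a deterministic constant, which is the setting relevant to the homogenization application of this paper.
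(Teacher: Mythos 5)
The paper does not prove this theorem: immediately after the statement it says the result is ``a special case of [AkKr, Theorem 2.8],'' and the accompanying Remark only sketches how to upgrade from rational scales~$t$ and rational cubes~$Q$ (which is what the cited theorem covers) to arbitrary ones by inner/outer approximation using the pointwise bound and pointwise subadditivity. Your proposal attempts a self-contained proof of the full Akcoglu--Krengel theorem, which is a genuinely different --- and much more ambitious --- route than what the paper takes.

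As such a proof, the sketch has the correct skeleton (candidate $\phi$ as an infimum of normalized conditional expectations, tiling plus Birkhoff for the upper bound, approximation for general cubes), but there is a genuine gap in the lower bound. You derive $\phi(\omega)\leq \mathbb{E}[\mu(tQ,\cdot)\mid\mathcal{J}](\omega)/\mathcal{L}^{d-1}(tQ)$ for each $t$, and then assert that ``conditional Fatou'' combined with the already-established $\limsup\leq\phi$ forces $\liminf_t\mu(tQ,\omega)/\mathcal{L}^{d-1}(tQ)\geq\phi(\omega)$. This step does not close. Writing $L=\liminf_t\mu(tQ,\cdot)/\mathcal{L}^{d-1}(tQ)$, Fatou yields $\mathbb{E}[L\mid\mathcal{J}]\leq\liminf_t\mathbb{E}[\mu(tQ,\cdot)\mid\mathcal{J}]/\mathcal{L}^{d-1}(tQ)$, an \emph{upper} bound on $\mathbb{E}[L\mid\mathcal{J}]$, whereas you need a \emph{lower} bound to conclude $L\geq\phi$. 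The reverse-Fatou/$\mathcal{J}$-measurability trick does pin down the $\limsup$ (since $\mathbb{E}[U\mid\mathcal{J}]\geq\limsup_t\mathbb{E}[\mu(tQ,\cdot)\mid\mathcal{J}]/\mathcal{L}^{d-1}(tQ)\geq\phi$ forces $U=\phi$), but the same device cannot give the matching bound for $L$: one only obtains $\mathbb{E}[L]\leq\mathbb{E}[\phi]$, which is already implied by $L\leq U=\phi$. The liminf direction is precisely the hard part of the multi-parameter subadditive ergodic theorem, and in Akcoglu--Krengel it is handled by a Wiener-type covering/maximal argument that has no analogue in your sketch. A secondary, smaller issue is the passage from the sub-lattice invariant $\sigma$-algebras $\mathcal{J}_m$ (associated with $\{\tau_{mz}\}$) down to $\mathcal{J}$: since $\mathcal{J}\subset\mathcal{J}_m$, $\mathbb{E}[f_m\mid\mathcal{J}_m]$ is not pointwise dominated by $\mathbb{E}[f_m\mid\mathcal{J}]$; the reduction requires averaging the tiling over all $m^{d-1}$ shifts in a fundamental domain and using invariance of the $\limsup$ under bounded translations, a step you only gesture at. For the purposes of this paper it is simpler --- and is what the authors do --- to invoke [AkKr, Theorem 2.8] directly and supply only the rational-to-general approximation.
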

\begin{remark}
	Strictly speaking, \cite[Theorem 2.8]{AkKr} only allows us to consider rational sequences $t\to +\infty$ and cubes with rational vertices. However, the pointwise bound in Definition \ref{def:subadprocess} (4) and pointwise subadditivity  (except for a null set not depending on the cubes!) help to extend the convergence to the more general setting by an inner and outer approximation argument.
\end{remark}

\subsection{Boundedness and probabilistic assumptions}
In this subsection we give the precise assumptions we make on the random integrand. Throughout this note we fix a finite set $\M\subset\R^m$ and a parameter $c\geq 1$. We denote by $\A_c$ the class of all Borel measurable functions $g:\R^d\times \M\times\M\times \mathbb{S}^{d-1}\to [0,+\infty)$ satisfying
\begin{equation}\label{cond:Ac}
\frac{1}{c}\leq g(x,a,b,\nu)\leq c,\quad \text{and}\quad g(x,a,b,\nu)=g(x,b,a,-\nu)\,,  
\end{equation}
for every $(x,a,b,\nu)\in\R^d\times\M\times \M\times \mathbb{S}^{d-1}$.

We next introduce the random setting considered in the notion of an admissible random surface integrand.
\begin{definition}[Admissible surface tensions]\label{defadmissible} 
We say that a function $g:\Omega\times\R^d\times\M\times\M\times\mathbb{S}^{d-1}\to [0,+\infty)$ is an admissible random surface tension, if it is jointly measurable and if for almost every $\omega\in\Omega$ the function $g(\omega)\colon\R^d\times\M\times\M\times\Sd\to[0,+\infty)$, $(x,a,b,\nu)\mapsto g(\omega,x,a,b,\nu)$ belongs to $\mathcal{A}_c$.
It is said to be $\Z^d$-stationary if there exists a discrete, measure-preserving, additive group action $\{\tau_{z}\}_{z\in\Z^d}$ such that for a.e. $\omega\in\Omega$ and for all $z\in\Z^d$ it holds that
\begin{equation*}
g(\tau_z\w,x,a,b,\nu)=g(\w,x+z,a,b,\nu)\quad\quad\forall\, (x,a,b,\nu)\in \R^d\times\M\times\M\times\mathbb{S}^{d-1}.
\end{equation*}
If, in addition, the group action $\{\tau_z\}_{z\in\Z^d}$ is ergodic, then $g$ is called ergodic.
\end{definition}
For any admissible random surface tension $g$, any $\omega\in\Omega$  with $g(\omega)\in\A_c$, any $\e>0$, and $D\in\mathscr{A}$ we consider the random functionals  $E_\e(\omega)(\cdot,D):L^1(D;\R^m)\to[0,+\infty]$ given by 
	\begin{equation}\label{def:energy}
	E_\e(\omega)(u,D)\defas
	\begin{cases}
	\displaystyle \int_{D\cap S_u} g(\omega,\tfrac{x}{\e},u^+,u^-,\nu_u)\dHd &\text{if }u\in BV(D;\M),\\
	+\infty &\text{otherwise on }L^1(D;\R^m).
	\end{cases}
	\end{equation}
We also consider constrained versions of $E_\e(\omega)$ by prescribing Dirichlet boundary conditions on part of the boundary. 
To be more precise, fix a relatively open subset $\Gamma\subset\partial D$ such that its relative boundary $\partial_{\rm rel}\Gamma$ in $\partial D$ is $\mathcal{H}^{d-1}$-negligible and a function $u_0\in BV_{\rm loc}(\R^d;\M)$ such that $\mathcal{H}^{d-1}(S_{u_0}\cap\partial D)=0$. In order to study the Dirichlet problem with constraint $u=u_0$ on $\Gamma$, we introduce the space
\begin{equation*}
	BV_{\Gamma,u_0}(D;\M)=\{u\in BV(D;\M):\,u|_{\partial D}=u_0|_{\partial D}\;\mathcal{H}^{d-1}\text{-a.e. on }\Gamma\},
\end{equation*}
where the symbol $|_{\partial D}$ denotes the trace on $\partial D$. We further define the constrained functional $E_{\e,\Gamma,u_0}(\w)$ by
\begin{equation*}
	E_{\e,\Gamma,u_0}(\w)(u,D)=\begin{cases}
		\displaystyle \int_{D\cap S_u}g(\w,\tfrac{x}{\e},u^+,u^-,\nu_u)\,\mathrm{d}\mathcal{H}^{d-1} &\mbox{if $u\in BV_{\Gamma,u_0}(D;\M)$,}
		\\
		+\infty &\mbox{otherwise on $L^1(D;\R^m)$.}
	\end{cases}
\end{equation*}

Throughout this note, $\e>0$ will vary in a strictly decreasing family of positive parameters converging to zero. 
%
\subsection{Statement of the main results}
Below we state the main results of this note.
\begin{theorem}\label{thm:main}
Let $g$ be an admissible random surface tension in the sense of Definition~\ref{defadmissible}, let $E_\e$ be as in~\eqref{def:energy}, and suppose that $g$ is $\Z^d$-stationary. There exists an event $\Omega'\subset\Omega$ with $\P(\Omega')=1$ such that for every $\omega\in\Omega'$ and every $D\in\mathscr{A}$ the functionals $E_\e(\omega)(\cdot,D)$ $\Gamma$-converge with respect to the strong convergence in  $L^1(D;\R^m)$ to the functional $E_{\rm hom}(\omega)(\cdot,D):L^1(D;\R^m)\to [0,+\infty]$ defined as
	\begin{equation}\label{intro:Ehom}
	E_{\rm hom}(\omega)(u,D)\defas
	\begin{cases}
	\displaystyle\int_{D\cap S_u}g_{\rm hom}(\w,u^+,u^-,\nu_u)\,\mathrm{d}\mathcal{H}^{d-1} &\text{if}\ u\in BV(D;\M),\\
	+\infty &\text{otherwise in}\ L^1(D;\R^m),
	\end{cases}
	\end{equation}
where $g_{\rm hom}(\omega,\cdot,\cdot,\cdot):\M\times\M\times\Sd\to[0,+\infty)$ is given by the multi-cell formula 
	\begin{equation}\label{def:ghom}
	g_{\rm hom}(\omega,a,b,\nu)\defas\lim_{t\to+\infty}\frac{1}{t^{d-1}}\inf\big\{E_1(\omega)(u,Q_t^\nu)\colon u\in BV(Q_t^\nu;\M),\ u=u_0^{a,b,\nu}\ \text{near}\ \partial Q_t^\nu\big\}\,.
	\end{equation}
In particular, the limit in~\eqref{def:ghom} exists for every $\omega\in\Omega'$ and every $(a,b,\nu)\in\M\times\M\times\Sd$. Moreover, for every $\omega\in\Omega'$ the mapping $(a,b,\nu)\mapsto g_{\rm hom}(\omega,a,b,\nu)$ is continuous.
Eventually, if $g$ is ergodic, then $g_{\rm hom}$ is independent of $\omega$.
\end{theorem}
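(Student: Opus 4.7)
The plan is to separate the argument into a probabilistic step establishing the almost sure existence of the limit $g_{\rm hom}$ in \eqref{def:ghom}, and a purely deterministic $\Gamma$-convergence step that takes this limit as input. The probabilistic step is where I expect the main difficulty to lie.

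For the probabilistic part I fix $(a,b,\nu)\in\M\times\M\times\Sd$ and encode the infimum in \eqref{def:ghom} as a subadditive stochastic process $\mu(\cdot,\omega)\colon\I\to\R$ in the sense of Definition~\ref{sub_proc}: for $I=[p,q)\in\I$ I let $\mu(I,\omega)$ be the infimum of $E_1(\omega)$ on the prism $O_\nu(I\times(-L,L))$ among competitors equal to $u^{a,b,\nu}_0$ near the lateral boundary, where $L$ is chosen large enough to decouple the construction from the thickness (the bounds on $g$ guarantee that the resulting process is insensitive to $L$ up to additive errors that do not affect the limit). Stationarity with respect to the induced $\Z^{d-1}$-action (on a sublattice of $H^\nu\cap\Z^d$, for rational $\nu$) follows from the $\Z^d$-stationarity of $g$, subadditivity from the usual cut-and-paste on partitions, and boundedness from \eqref{cond:Ac}. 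The delicate axiom is measurability of $\omega\mapsto\mu(I,\omega)$: I would first reduce to the case $\M=\{e_1,\dots,e_K\}\subset\R^K$ by an arbitrary bijective relabeling, so that the traces $u^\pm$ are recovered uniquely from $u^+-u^-$ on the jump set, and then invoke \cite[Proposition A.1]{CDMSZ19}. Theorem~\ref{thm:subadditive} then yields a full-measure event $\Omega_{a,b,\nu}$ on which the limit exists; intersecting over a countable dense set of rational directions and values gives $\Omega'$, and the extension of $g_{\rm hom}$ to all $(a,b,\nu)\in\M\times\M\times\Sd$ follows from continuity obtained by comparing cell problems for close parameters through a thin transition layer whose cost vanishes. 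A separate argument combining a translation of admissible competitors with Birkhoff's theorem (Theorem~\ref{thm:Birkhoff}) shows that the limit in \eqref{intro:limitx0}, with cubes centered at $tx_0$, coincides with $g_{\rm hom}(\omega,a,b,\nu)$ for every $x_0\in\R^d$.

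With $g_{\rm hom}$ and the identity \eqref{intro:limitx0} at hand, the $\Gamma$-convergence proceeds deterministically. For the $\liminf$-inequality I apply the blow-up method: for $u_\e\to u$ with uniformly bounded energy I localize around $\Hd$-a.e.\ $x_0\in S_u$ via small cubes $Q_\rho^{\nu_u(x_0)}(x_0)$, use the fundamental estimate \cite[Lemma 4.4]{AmBrI} to replace $u_\e$ near the lateral boundary by the pure-jump profile $u_{x_0}^{u^+(x_0),u^-(x_0),\nu_u(x_0)}$, and then invoke \eqref{intro:limitx0} to bound the localized energy from below by $g_{\rm hom}(\omega,u^+(x_0),u^-(x_0),\nu_u(x_0))$. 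For the $\limsup$-inequality I use the density of polyhedral partitions \cite{BCG} to reduce to targets with flat interfaces, construct a recovery sequence face-by-face from almost minimizers of \eqref{def:ghom}, and glue the local constructions together via the fundamental estimate to avoid spurious energy at the seams. When the group action is ergodic, every $\tau_z$-invariant $\sigma$-algebra is trivial, so the $\F$-measurable limit produced by Theorem~\ref{thm:subadditive} is almost surely constant and $g_{\rm hom}$ does not depend on $\omega$.

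The main obstacle is the probabilistic step: building a subadditive process adapted to cubes oriented by an arbitrary $\nu\in\Sd$ and coping with irrational directions (which have to be handled by approximation from rational ones rather than directly by $\Z^{d-1}$-actions), proving measurability of the infimum in $\omega$ without the simplifying assumption that $g$ depends only on $u^+-u^-$ (the relabeling trick above is the key device), and ensuring that the exceptional null-set from Theorem~\ref{thm:subadditive} can be chosen independent of $(a,b,\nu)$ and $x_0$ by exploiting the continuity of the cell formula in its parameters.
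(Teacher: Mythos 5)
Your architecture mirrors the paper's: establish almost sure existence of $g_{\rm hom}$ via the subadditive ergodic theorem, extend to translated cubes via Birkhoff, secure measurability by relabelling $\M$ as $\{e_1,\dots,e_K\}$, and then run a deterministic blow-up argument (with the fundamental estimate) for the $\liminf$-inequality and a polyhedral density argument for the $\limsup$-inequality. The serious gap is in your encoding of the cell problem as a subadditive process. You propose prisms $O_\nu(I\times(-L,L))$ of \emph{fixed} thickness $L$ with the datum $u_0^{a,b,\nu}$ prescribed only near the \emph{lateral} boundary, and assert that the resulting limit is ``insensitive to $L$ up to additive errors that do not affect the limit.'' Both halves of this are problematic. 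With only lateral data the process does not compute $g_{\rm hom}$ at all: already for $g\equiv 1$, $d=2$, $\nu=e_2$, the competitor which equals $a$ on $(0,\delta)\times(0,L)\cup(t-\delta,t)\times(0,L)$ and $b$ elsewhere satisfies the lateral datum, yet its interior jump set has length $2(L+\delta)=O(1)$ independently of $t$, so the scaled slab infimum tends to $0$ while $g_{\rm hom}=1$. Even if one prescribes the datum on \emph{all} faces of the slab, a fixed $L$ confines competitors' interfaces to $\{|x\cdot\nu|<L\}$ and so forbids them from exploring favourable regions of the medium at larger heights; a $\Z^d$-periodic $g$ with cheap layers at half-integer heights and $L<\tfrac14$ already produces a strict gap between slab and cube cell values. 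Turning ``insensitive to $L$'' into a theorem would require a plane-like/localisation lemma for almost-minimizers that you neither state nor prove, and which is in fact unnecessary: the paper defines $I^\nu$ with thickness proportional to the maximal side length $s_{\rm max}(I)$ of $I$, so that for $I=t[0,1)^{d-1}$ the set $I^\nu$ is exactly an oriented cube of side $m_\nu t$ and the process literally computes the multi-cell quantity in \eqref{def:ghom}. Subadditivity still holds because the sub-prisms $I_i^\nu$ are contained in $I^\nu$ and $u_0^{a,b,\nu}$ carries no jump outside $H^\nu$.

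A smaller point on the $\limsup$-inequality: you plan to glue local recovery sequences with the fundamental estimate, whereas the paper avoids any gluing error by choosing each local competitor in $\Adm\big(u^{a_i,a_j,\nu_k},Q_\rho^{\nu_k}(\cdot)\big)$ so that the pasted function already matches the polyhedral target near cube boundaries and creates no spurious interface there; the genuinely delicate part of that step, which your sketch does not touch, is the bookkeeping near the $(d-2)$-dimensional edges where faces of the polyhedral jump set meet and cubes of different orientations overlap (handled in the paper via the iterated choice of the families $\ZZ_\rho^k$ and the geometric Lemma~\ref{l.closeto_d-2}).
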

We have the following $\Gamma$-limit of the constrained functionals:
\begin{theorem}\label{thm:boundary}
	Under the same assumptions as in Theorem \ref{thm:main} and the above assumptions on $\Gamma$ and $u_0$, for $\w\in\Omega'$ the functionals $E_{\e,\Gamma,u_0}(\w)(\cdot,D)$ $\Gamma$-converge with respect to the strong convergence in  $L^1(D;\R^m)$ to the functional $E_{\rm hom,\Gamma,u_0}(\omega)(\cdot,D):L^1(D;\R^m)\to [0,+\infty]$ that is finite only on $BV(D;\M)$, where it is given by
	\begin{equation}
		E_{{\rm hom},\Gamma,u_0}(\omega)(u,D)\defas
		\int_{D\cap S_u}g_{\rm hom}(\w,u^+,u^-,\nu_u)\,\mathrm{d}\mathcal{H}^{d-1}+\int_{\Gamma}g_{\rm hom}(\w,u|_{\partial D},u_0|_{\partial D},\nu_{\Gamma})\dHd.
	\end{equation}
	where $g_{\rm hom}$ is given by Theorem \ref{thm:main} and $\nu_{\Gamma}$ is the inner normal vector to $\Gamma$.
\end{theorem}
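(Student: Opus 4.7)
The plan is to reduce Theorem~\ref{thm:boundary} to the unconstrained homogenization Theorem~\ref{thm:main} by extending functions across $\Gamma$ using the datum $u_0$. I would fix a Lipschitz set $\tilde D \in \mathscr{A}$ with $D \subset \tilde D$, obtained by attaching to $D$ a thin outer collar along $\Gamma$, so that (up to an $\mathcal{H}^{d-1}$-null set) $\partial D \setminus \Gamma \subset \partial\tilde D$ and $\Gamma \subset \tilde D$; this is possible by locally straightening $\partial D$ and using $\mathcal{H}^{d-1}(\partial_{\rm rel}\Gamma) = 0$. For $v \in BV(D;\M)$ let $\tilde v \in BV(\tilde D;\M)$ denote its extension by $u_0$; since $\mathcal{H}^{d-1}(S_{u_0}\cap \partial D) = 0$, the jump set of $\tilde v$ in $\tilde D$ decomposes modulo $\mathcal{H}^{d-1}$-null sets as $(D \cap S_v) \cup \{x \in \Gamma : v|_{\partial D}(x) \neq u_0|_{\partial D}(x)\} \cup ((\tilde D \setminus \overline D) \cap S_{u_0})$, yielding
\begin{equation*}
E_\e(\omega)(\tilde v, \tilde D) = E_\e(\omega)(v, D) + J_\e(\omega, v) + E_\e(\omega)(u_0, \tilde D \setminus \overline D),
\end{equation*}
where $J_\e(\omega, v) \defas \int_{\{v|_{\partial D}\neq u_0|_{\partial D}\}\cap\Gamma} g(\omega,\tfrac{x}{\e}, v|_{\partial D}, u_0|_{\partial D}, \nu_\Gamma) \dHd$ vanishes whenever $v \in BV_{\Gamma, u_0}(D;\M)$; an analogous decomposition holds for $E_{\rm hom}$ with $\Gamma$-term $\int_\Gamma g_{\rm hom}(\omega, v|_{\partial D}, u_0|_{\partial D}, \nu_\Gamma)\dHd$.

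For the liminf inequality, given $u_\e \in BV_{\Gamma, u_0}(D;\M)$ with $u_\e \to u$ in $L^1(D;\R^m)$, I would extend by $u_0$ to $\tilde u_\e \to \tilde u$ in $L^1(\tilde D;\R^m)$. The constraint forces $J_\e(\omega, u_\e) \equiv 0$, so Theorem~\ref{thm:main} applied on $\tilde D$ together with the above decomposition gives the desired lower bound up to the collar corrections $E_\e(\omega)(u_0, \tilde D\setminus \overline D)$ and $E_{\rm hom}(\omega)(u_0, \tilde D\setminus \overline D)$. Both are dominated by $c\,\mathcal{H}^{d-1}(S_{u_0} \cap (\tilde D \setminus \overline D))$, which vanishes as the collar shrinks (once more by $\mathcal{H}^{d-1}(S_{u_0} \cap \partial D) = 0$); a diagonal argument over a nested family of $\tilde D$'s closes the liminf.

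For the limsup inequality, I would apply Theorem~\ref{thm:main} on $\tilde D$ to the extension $\tilde u$ to obtain a recovery sequence $\tilde u_\e \to \tilde u$ in $L^1(\tilde D;\R^m)$ with $\lim_\e E_\e(\omega)(\tilde u_\e, \tilde D) = E_{\rm hom}(\omega)(\tilde u, \tilde D)$, and then modify $\tilde u_\e$ near $\Gamma$ to enforce the boundary constraint. The cleanest route is to first reduce via the density result~\cite{BCG} mentioned in the introduction to polyhedral $u$ satisfying $\mathcal{H}^{d-1}(S_u\cap\partial D)=0$; for such $u$, a tailored recovery sequence with $u_\e = u_0$ in a thin inner neighborhood of $\Gamma$ can be constructed cell by cell via the multi-cell formula~\eqref{def:ghom}, choosing near $\Gamma$ cells whose Dirichlet datum on the $\Gamma$-side is $u_0|_{\partial D}$ and on the inner side matches $u|_{\partial D}$, so that the cost produced is exactly $\int_\Gamma g_{\rm hom}(\omega, u|_{\partial D}, u_0|_{\partial D}, \nu_\Gamma)\dHd$. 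The pieces are joined via the fundamental estimate for partitions~\cite[Lemma 4.4]{AmBrI} in regions where the required $L^1$ proximity is available, and a diagonal argument in the various approximation parameters yields the recovery sequence.

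The main technical obstacle is this limsup construction: the modification near $\Gamma$ has to produce exactly the $\Gamma$-term $\int_\Gamma g_{\rm hom}(\omega, u|_{\partial D}, u_0|_{\partial D}, \nu_\Gamma)\dHd$ rather than the crude upper bound $c\,\mathcal{H}^{d-1}(\Gamma)$; achieving this sharp cost is precisely what the cell-by-cell construction for polyhedral $u$ together with the fundamental estimate are designed to yield, and is the delicate part of the argument.
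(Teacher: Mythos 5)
Your liminf argument is close in spirit to the paper's Step~2: there one extends $u_\e$ by $u_0$ to a neighbourhood $U_\eta$ of $\overline\Gamma$ built from finitely many balls (rather than a Lipschitz collar $\tilde D$) and applies the unconstrained lower bound on the open set $D\cup U_\eta$, invoking Remark~\ref{r.justopen}. Your variant would also work, but be aware that the existence of a Lipschitz collar $\tilde D\in\mathscr A$ with $\tilde D\cap\partial D=\Gamma$ up to $\Hd$-null sets is not automatic for an arbitrary relatively open $\Gamma$ with $\Hd(\partial_{\rm rel}\Gamma)=0$ (the relative boundary can be a Cantor-like null set); you would either need to approximate $\Gamma$ from inside by nicer sets, or drop the Lipschitz requirement and use the observation that the lower bound holds on arbitrary open sets, exactly as the paper does. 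The small control of the extra jump on $\partial D$ near $\partial_{\rm rel}\Gamma$ (paper's estimate via $\sum r_i^{d-1}\le\eta$) is also implicitly needed and not spelled out in your proposal.

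The limsup is where there is a genuine gap. You correctly identify that the hard part is producing exactly the cost $\int_\Gamma g_{\rm hom}(\w,u|_{\partial D},u_0|_{\partial D},\nu_\Gamma)\dHd$ rather than a crude $O(\Hd(\Gamma))$ bound, but your ``cell by cell along $\Gamma$'' construction is stated rather than proved and faces a concrete obstruction: the multi-cell formula~\eqref{def:ghom} only gives the asymptotic cost for a \emph{pure jump} $u_x^{a,b,\nu}$ with constant traces $a,b\in\M$, whereas the datum $u_0|_{\partial D}$ is a general $BV$-trace, not piecewise constant. You would need an intermediate approximation of the traces, and then glue the boundary cells to the interior recovery sequence of $u$ whose behaviour near $\Gamma$ is uncontrolled; this double gluing is not covered by a straightforward application of the fundamental estimate. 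The paper sidesteps all of this by inserting a separate relaxation step (Step~3): for every $u\in BV(D;\M)$ one builds $u_n\to u$ in $L^1$ with $u_n=u_0$ in a neighbourhood of $\Gamma$ and $E_{{\rm hom},\Gamma,u_0}(\w)(u_n,D)\to E_{{\rm hom},\Gamma,u_0}(\w)(u,D)$, using the covering function $v_\eta(u)$, the perimeter-continuity argument of \cite[Lemma~B.1]{BCR}, the continuity result \cite[Theorem~3.1]{R18ACV}, and the $BV$-ellipticity of $g_{\rm hom}$. For such $u_n$ the $\Gamma$-term vanishes, so the limsup reduces to taking the unconstrained recovery sequence from Proposition~\ref{prop:limsup} and joining it to $v_{\eta_n}(u)$ near $\partial D$ via the fundamental estimate; since $u_n$ and $v_{\eta_n}(u)$ coincide on $D\setminus\overline{D_1}$, the gluing error vanishes. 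This relaxation step is the key missing ingredient in your proposal, and without an analogue of it your cell-by-cell construction does not close.
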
 
\begin{remark}
	The fact that we take the inner normal vector comes from the convention to take $u^+$ has the trace on the side to which the normal vector points to (see Section \ref{s.BV}). The additional part of the energy thus measures the energy of the discrepancy between $u$ and the boundary datum $u_0$ on $\Gamma$. Note that due to the assumption that $\mathcal{H}^{d-1}(S_{u_0}\cap\partial D)=0$ it does not matter if we take the inner or outer trace of $u_0$ on $\partial D$. Moreover, given any function $\bar{u}$ in the trace-space of $BV(D;\M)$, we can use \cite[Lemma 2.7]{BCG} to construct a function $u_0\in BV_{\rm loc}(\R^d;\M)$ such that $u_0|_{\partial D}=\bar{u}$ and $\mathcal{H}^{d-1}(S_{u_0}\cap\partial D)=0$.
\end{remark}
Applying the above theorem to $D=Q^{\nu}$, $\Gamma$ the union of the two faces of $Q^\nu$ orthogonal to $\nu$, and $u_0=u^{a,b,\nu}$, we obtain as a corollary the following convergence result for minimization problems, which in particular shows that isotropic homogenized integrands can be approximated in terms of minimization problems which only require the prescription of boundary conditions on those two faces. More precisely, for every $t>0$ and $(a,b,\nu)\in\M\times\M\times\Sd$ we consider the quantity
\begin{equation}\label{eq:alternativ_process}
	\widetilde{\m}(\omega)(u^{a,b,\nu},Q_t^\nu)\defas\inf\{E_1(\w)(u,Q^{\nu}_t):\,u=u^{a,b,\nu}\text{ on }\{| x\cdot\nu|=\tfrac{t}{2}\}\text{ in the sense of traces}\}.
\end{equation}
Then the following holds true.
\begin{corollary}\label{cor:limit-alternativ-process}
Let $\Omega'$ be as in Theorem~\ref{thm:main} and let $\widetilde{\m}$ be as in~\eqref{eq:alternativ_process}. For every $\omega\in\Omega'$  and every $(a,b,\nu)\in\M\times\M\times\Sd$ the scaled quantities $\frac{1}{t^{d-1}}\widetilde{\m}(\omega)(u^{a,b,\nu},Q_t^\nu)$ converge to
\begin{equation}\label{eq:min-alternative}
	\min_{u\in BV(Q^\nu;\M)}\left\{\int_{Q^{\nu}\cap S_u}g_{\rm hom}(\w,u^+,u^-,\nu_u)\,\mathrm{d}\mathcal{H}^{d-1}+\int_{\{|x \cdot\nu|=\tfrac{1}{2}\}}g_{\rm hom}(\w,u|_{\partial Q^\nu},u^{a,b,\nu},\pm\nu)\dHd\right\}
\end{equation}
as $t\to +\infty$. Moreover, if $g_{\rm hom}$ is isotropic, i.e., independent of $\nu$, then there holds
	\begin{equation}\label{eq:approx-alternative}
	g_{\rm hom}(\omega,a,b)=\lim_{t\to+\infty}\frac{1}{t^{d-1}}\widetilde{\m}(\omega)(u^{a,b,\nu},Q_t^\nu)
	\end{equation}
for every $\omega\in\Omega'$ and every $(a,b,\nu)\in\M\times\M\times\Sd$.
\end{corollary}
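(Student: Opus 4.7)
The first assertion — convergence of $t^{1-d}\widetilde{\m}(\omega)(u^{a,b,\nu},Q_t^\nu)$ to the minimum in~\eqref{eq:min-alternative} — I will reduce to Theorem~\ref{thm:boundary} by rescaling. The change of variables $v(y)=u(ty)$ identifies $t^{1-d}\widetilde{\m}(\omega)(u^{a,b,\nu},Q_t^\nu)$ with the infimum of $E_{1/t}(\omega)(\cdot,Q^\nu)$ over $BV_{\Gamma,u_0}(Q^\nu;\M)$, where $\Gamma=\{|x\cdot\nu|=\tfrac{1}{2}\}$ and $u_0=u^{a,b,\nu}$. Theorem~\ref{thm:boundary} yields $\Gamma$-convergence of these constrained functionals to $E_{{\rm hom},\Gamma,u_0}(\omega)(\cdot,Q^\nu)$ as $1/t\to 0$; combined with the equicoercivity provided by the lower bound $g\geq 1/c$ (which gives uniform $BV$-bounds on energy-bounded sequences, hence $L^1$-compactness and a limit with the correct trace on $\Gamma$), standard $\Gamma$-convergence theory delivers the convergence of minima.

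For the isotropic case, the upper bound $\leq g_{\rm hom}(\omega,a,b)$ is immediate by testing \eqref{eq:min-alternative} with $u=u^{a,b,\nu}$: the boundary term vanishes and the interior contribution equals $g_{\rm hom}(\omega,a,b)\cdot\mathcal{H}^{d-1}(\{x\cdot\nu=0\}\cap Q^\nu)=g_{\rm hom}(\omega,a,b)$. The matching lower bound I plan to derive from the \emph{subadditivity} $g_{\rm hom}(\omega,a,c)\leq g_{\rm hom}(\omega,a,b)+g_{\rm hom}(\omega,b,c)$ for every $a,b,c\in\M$, which is the step I expect to be the main obstacle. To prove it I will test the lower semicontinuity of the $\Gamma$-limit $E_{\rm hom}(\omega)(\cdot,Q^\nu)$ on the three-layer functions
\[
v_n(x)=a\,\mathds{1}_{\{x\cdot\nu>1/n\}}+b\,\mathds{1}_{\{|x\cdot\nu|\leq 1/n\}}+c\,\mathds{1}_{\{x\cdot\nu<-1/n\}},
\]
which converge to $u^{a,c,\nu}$ in $L^1(Q^\nu;\R^m)$ and, by isotropy, satisfy $E_{\rm hom}(\omega)(v_n,Q^\nu)=g_{\rm hom}(\omega,a,b)+g_{\rm hom}(\omega,b,c)$; lower semicontinuity of $E_{\rm hom}(\omega)$ (automatic, being a $\Gamma$-limit) then yields the inequality.

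Once subadditivity is available, I conclude by slicing in direction $\nu$. For arbitrary $u\in BV(Q^\nu;\M)$ and $\mathcal{H}^{d-1}$-a.e.\ $y\in H^\nu\cap Q^\nu$, the slice $u_y\in BV((-\tfrac{1}{2},\tfrac{1}{2});\M)$ has well-defined endpoint traces $u_y(\pm\tfrac{1}{2})$ matching $\mathcal{H}^{d-1}$-a.e.\ the surface traces of $u$ on $\{x\cdot\nu=\pm\tfrac{1}{2}\}$. Writing out the finite path of values of $u_y$ and prepending/appending $b$ and $a$, iterated subadditivity yields the pointwise (in $y$) bound
\[
\sum_{t\in (S_u)_y^\nu}g_{\rm hom}(\omega,u_y^+(t),u_y^-(t))+g_{\rm hom}(\omega,u_y(-\tfrac{1}{2}),b)+g_{\rm hom}(\omega,u_y(\tfrac{1}{2}),a)\geq g_{\rm hom}(\omega,a,b).
\]
Integrating over $y\in H^\nu\cap Q^\nu$ and using the standard slicing identity
\[
\int_{Q^\nu\cap S_u}g_{\rm hom}(\omega,u^+,u^-)\,|\nu_u\cdot\nu|\dHd=\int_{H^\nu\cap Q^\nu}\sum_{t\in (S_u)_y^\nu}g_{\rm hom}(\omega,u_y^+(t),u_y^-(t))\,dy
\]
together with $|\nu_u\cdot\nu|\leq 1$ and the identification of $u_y(\pm\tfrac{1}{2})$ with $u|_{\partial Q^\nu}$ on $\{x\cdot\nu=\pm\tfrac{1}{2}\}$, I obtain
\[
\int_{Q^\nu\cap S_u}g_{\rm hom}(\omega,u^+,u^-)\dHd+\int_{\{|x\cdot\nu|=1/2\}}g_{\rm hom}(\omega,u|_{\partial Q^\nu},u^{a,b,\nu})\dHd\geq g_{\rm hom}(\omega,a,b),
\]
which matches the upper bound. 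After the subadditivity step, the remaining slicing/trace identification is routine and completes the proof.
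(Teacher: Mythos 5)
Your proof is correct and follows the paper's proof very closely for the first assertion (rescaling, Theorem~\ref{thm:boundary}, equicoercivity, $\Gamma$-convergence of minima) and for the upper bound in the isotropic case. The one place where you diverge is the lower bound in the isotropic case, and the comparison is worth noting.

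For the triangle inequality $g_{\rm hom}(\omega,a,c)\leq g_{\rm hom}(\omega,a,b)+g_{\rm hom}(\omega,b,c)$, the paper invokes that $E_{\rm hom}(\w)(\cdot,Q^\nu)$ is $L^1$-lower semicontinuous as a $\Gamma$-limit and then cites \cite[Theorem 2.1]{AmBrII} (lower semicontinuity implies $BV$-ellipticity) and \cite[Example 2.4]{AmBrII} (which encodes exactly the three-layer construction). You reprove this directly by testing lower semicontinuity on the three-layer functions $v_n$, which is self-contained but has the same mathematical content. More interesting is your slicing step: the paper first reduces, via the density result quoted from Step~3 of the proof of Theorem~\ref{thm:boundary}, to competitors $u$ that attain $u^{a,b,\nu}$ in a neighborhood of the top and bottom facets, so that the boundary integral in~\eqref{eq:isotropic} is zero and each slice $u_y^\nu$ starts at $b$ and ends at $a$; the telescopic bound then gives~\eqref{est:isotropic} directly. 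You instead keep an arbitrary competitor and retain the boundary integral, identifying the one-sided endpoint limits $u_y(\pm\tfrac12)$ with the trace $u|_{\partial Q^\nu}$ on the two facets (a standard but nontrivial trace-slicing fact), prepend $b$ and append $a$, and integrate. This trades the paper's appeal to the density-in-energy argument for a trace-identification argument; both are legitimate, and the outcome is the same inequality. In short: same strategy, with a reproof of the $BV$-ellipticity triangle inequality and a variant of the slicing step that avoids the density reduction at the cost of handling the boundary traces explicitly.
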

\begin{remark}\label{rem:alternative}
Taking $u^{a,b,\nu}$ as a test function in~\eqref{eq:min-alternative} with $u^{a,b,\nu}$, we see that the minimum value in~\eqref{eq:min-alternative} and hence the limit of the scaled quantities $\frac{1}{t^{d-1}}\widetilde{\m}(\omega)(u^{a,b,\nu},Q_t^\nu)$ is bounded from above by $g_{\rm hom}(\omega,a,b,\nu)$. Moreover, in the anisotropic case $g_{\rm hom}(\omega,a,b,\nu)$ is in general strictly larger (see Example~\ref{ex:failure_top_bottom}), while in the isotropic case the above corollary implies that the quantities coincide. Note that isotropy of $g_{\rm hom}$ requires in general strong structural properties of $g$ (e.g., $g$ does not depend on $\nu$ and the distribution of $g$ is statistically isotropic).
\end{remark}
\begin{proof}[Proof of Corollary~\ref{cor:limit-alternativ-process}]
By choosing $\e=\frac{1}{t}$ and applying a change of variables (see, e.g., Remark~\ref{rem:rescaling}) the convergence of $\frac{1}{t^{d-1}}\widetilde{\m}(\omega)(u^{a,b,\nu},Q_t^\nu)$ to~\eqref{eq:min-alternative} follows from Theorem~\ref{thm:boundary} together with the fundamental property of $\Gamma$-convergence and the $L^1$-equi-coercivity of the functionals $E_{\e}(\w)$ whenever $g(\w)\in\A_c$ . 

\smallskip
Let us now assume that $g_{\rm hom}$ is isotropic and let $\omega\in\Omega'$ and $(a,b)\in\M\times\M$ be fixed. It suffices to show that 
\begin{equation}\label{eq:isotropic}
		g_{\rm hom}(\w,a,b)=\min_{u\in BV(Q^\nu;\M)}\left\{\int_{Q^{\nu}\cap S_u}g_{\rm hom}(\w,u^+,u^-)\,\mathrm{d}\mathcal{H}^{d-1}+\int_{\{|x\cdot\nu|=\tfrac{1}{2}\}}g_{\rm hom}(\w,u|_{\partial Q^\nu},u^{a,b,\nu})\dHd\right\}
	\end{equation}
for every $\nu\in\Sd$. In view of Remark~\ref{rem:alternative} it suffices to bound $g_{\rm hom}(\omega,a,b)$ from above by the right-hand side of~\eqref{eq:isotropic}. To this end, it is enough to show that
	\begin{equation}\label{est:isotropic}
		g_{\rm hom}(\w,a,b)\leq \int_{Q^{\nu}\cap S_u}g_{\rm hom}(\w,u^+,u^-)\dHd
	\end{equation}
	for all functions $u\in BV(Q^\nu;\M)$ that attain the boundary values in a neighborhood of the top and the bottom facet as those functions are dense in energy (see Step 3 of the proof of Theorem~\ref{thm:boundary}).
	
	 Since the mapping $x\mapsto g_{\rm hom}(\omega,u^+(x),u^-(x))\mathds{1}_{S_u}$ is Borel measurable, we can apply the coarea formula in~\cite[Theorem 4.1 (a)]{BraidesFD} to deduce that
	\begin{align*}
		\int_{Q^{\nu}\cap S_u}g_{\rm hom}(\w,u^+,u^-)\dHd&\geq \int_{Q^{\nu}\cap S_u}|\nu_u\cdot\nu|g_{\rm hom}(\w,u^+(x),u^-(x))\dHd(x)
		\\
		&=\int_{Q^{\nu}\cap H^{\nu}}\sum_{t\in S_{u_y^{\nu}}}g_{\rm hom}(\w,(u_y^{\nu})^+,(u_y^{\nu})^-)\dHd(y),
	\end{align*}
	where the function $u_y^{\nu}\in BV((-1/2,1/2);\M)$ is the one-dimensional restriction of $u$ on the slice $(Q^{\nu})_y^{\nu}=\{t\in\R:\,y+t\nu\in Q^{\nu}\}=(-1/2,1/2)$ (cf. Section \ref{s.BV}). Being a $\Gamma$-limit, the functional $E_{\rm hom}(\w)(\cdot,Q^\nu)$ is $L^1$-lower semicontinuous, hence~\cite[Theorem 2.1]{AmBrII} implies that $g_{\rm hom}(\omega,\cdot,\cdot)$ is $BV$-elliptic. Together with~\cite[Example 2.4]{AmBrII} this in turn yields the triangle inequality
	\begin{equation}\label{est:triangle}
		g_{\rm hom}(\w,a,b)\leq g_{\rm hom}(\w,a,c)+g_{\rm hom}(\w,c,b)\quad\text{ for all }a,b,c\in \M.
	\end{equation}  
	Since $u=u^{a,b,\nu}$ in a neighborhood of $\big\{|x\cdot\nu|=\frac{1}{2}\big\}$, we know that $u_y^\nu(t)=a$ for $t> 1/2-\delta$ and $u_y^\nu(t)=b$ for $t<-1/2+\delta$ for some $\delta>0$ and $\Hd$-a.e. $y\in H^\nu$. Thus, applying~\eqref{est:triangle} for $\Hd$-a.e. $y\in H^\nu$ we obtain the telescopic estimate
	\begin{equation*}
		g_{\rm hom}(\w,a,b)\leq\sum_{t\in S_{u_y^{\nu}}}g_{\rm hom}(\w,(u_y^{\nu})^+,(u_y^{\nu})^-).
	\end{equation*}
	Integrating this inequality over $H^{\nu}\cap Q^{\nu}$ yields~\eqref{est:isotropic}. 
\end{proof}
\begin{example}\label{ex:failure_top_bottom}
	Let $\lambda:\Omega\times\R^d\to [1,2]$ be a stationary map and define the integrand $g(\w,x,\nu)=\lambda(\w,x)\sqrt{8|\nu\cdot e_1|^2+|\nu|^2}$. Note that the term $\psi(\nu)=\sqrt{8|\nu\cdot e_1|^2+|\nu|^2}$ defines a norm on $\R^d$ and thus is convex. Fix now the unit vector $\nu=\frac{1}{\sqrt{2}}e_1+\frac{1}{\sqrt{2}}e_2$. Using the lower bound $\lambda\geq 1$ and the BV-ellipticity of the map $\nu\mapsto \psi(\nu)$ (see \cite[Example 2.8]{AmBrII}), we obtain that
	\begin{align*}
	\frac{1}{t^{d-1}}\inf\big\{E_1(\omega)(u,Q_t^\nu) &\colon u\in BV(Q_t^\nu;\M),\ u=u_0^{a,b,\nu}\ \text{near}\ \partial Q_t^\nu\big\}\\
	&\geq\frac{1}{t^{d-1}}\inf\bigg\{\int_{Q_t^\nu\cap S_u}\psi(\nu_u)\dHd\colon u\in BV(Q_t^\nu;\M),\ u=u_0^{a,b,\nu}\ \text{near}\ \partial Q_t^\nu\bigg\}\\
		&\geq\frac{1}{t^{d-1}}\int_{Q^{\nu}_t\cap  S_{u_0^{a,b,\nu}}}\psi(\nu_{u_0^{a,b,\nu}})\dHd=\sqrt{8|\nu_1|^2+|\nu|^2}=\sqrt{5},
	\end{align*}
	so that $g_{\rm hom}(\w,a,b,\nu)\geq \sqrt{5}>2$. On the other hand, the function $u_0^{a,b,e_2}$ satisfies $u_0^{a,b,e_2}=u_0^{a,b,\nu}$ on $\{|x\cdot\nu|=\tfrac{1}{2}\}$ and using that $\lambda\leq 2$ its energy can be bounded by $2$. Note that this effect is only due to anisotropy. Indeed, the same example works for the case that $\lambda\equiv 1$.
\end{example}
\section{Proof of the main results}
\subsection{Existence of the multi-cell formula}
As a first step towards the proof of Theorem~\ref{thm:main} we show that the limit defining $g_{\rm hom}$ in~\eqref{def:ghom} exists almost surely. To this end, let us introduce for every $\e>0$, every $A\in\mathscr{A}$, and every $v\in BV_{\rm loc}(\R^d;\M)$ the quantity
	\begin{equation}\label{def:min-problem}
	\m_\e(\omega)(v,A)\defas\inf\big\{E_\e(\omega)(u,A)\colon u\in\Adm(v,A)\big\}. 
	\end{equation}
with
	\begin{equation}
	\Adm(v,A)\defas\big\{u\in BV(A;\M)\colon u=v\ \text{near}\ \partial A\}
	\end{equation}
The following holds true.
\begin{proposition}\label{prop:existence-limit}
Let $g:\Omega\times\R^d\times\M\times\M\times\Sd\to[0,+\infty)$ be an admissible random surface tension which is $\Z^d$-stationary. There exists $\Omega'\subset\Omega$ with $\P(\Omega')=1$ such that for every $\omega\in\Omega'$ and every $(x,a,b,\nu)\in\R^d\times\M\times\M\times\Sd$ the limit
	\begin{equation}\label{eq:existence-limit}
	\begin{split}
	g_{\rm hom}(\omega,a,b,\nu)=
	\lim_{t\to+\infty}\frac{\m_1(\omega)(u_{t x}^{a,b,\nu},Q_t^\nu(tx))}{t^{d-1}}
	=\lim_{t\to+\infty}\frac{\m_1(\omega)(u_{0}^{a,b,\nu},Q_t^\nu)}{t^{d-1}}
	\end{split}
	\end{equation}
exists and is independent of $x$. 
Moreover, the function $(a,b,\nu)\mapsto g_{\rm hom}(\w,a,b,\nu)$ is continuous. Eventually, if $g$ is ergodic then $g_{\rm hom}$ is independent of $\omega$ and
	\begin{equation}\label{eq:ghom-ergodic}
	g_{\rm hom}(a,b,\nu)=\lim_{t\to+\infty}\frac{\mathbb{E}\big[\m_1(\omega)(\uzn,Q_t^\nu)\big]}{t^{d-1}}.
	\end{equation}
\end{proposition}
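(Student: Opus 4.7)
The plan is to show existence of the limit via the subadditive ergodic theorem applied on the $(d-1)$-dimensional subspace $H^\nu$, exploiting the one-dimensional rigidity of the interface in the $\nu$-direction. Fix $(a,b,\nu)\in\M\times\M\times\Sd$. For each half-open interval $I=[p,q)\in\I$ I define the rectangular region $R_I^\nu\defas O_\nu(I\times(-h,h))$ for some fixed height $h>0$ large enough to contain all the "bulk" part of competitors, and set
\begin{equation*}
\mu^{a,b,\nu}(I,\omega)\defas \m_1(\omega)(u_0^{a,b,\nu},R_I^\nu).
\end{equation*}
Before applying Theorem \ref{thm:subadditive} I must verify the four properties of Definition \ref{sub_proc}. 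Boundedness is immediate upon testing with $u_0^{a,b,\nu}$ itself and using $g\le c$, while the lower bound $0$ is trivial. Subadditivity follows from the standard gluing argument: given a partition of $I$, one takes near-optimal competitors on each $R_{I_i}^\nu$, which already coincide with $u_0^{a,b,\nu}$ near the lateral boundaries and can therefore be concatenated into a single admissible function on $R_I^\nu$ (the interfaces at the glued faces carry no $(d-1)$-dimensional measure on $S_u$). Stationarity uses $\Z^d$-stationarity of $g$: a shift $I\mapsto I+z$ with $z\in\Z^{d-1}$ corresponds to the $\R^d$-shift by $O_\nu(z,0)$, and one invokes the group action $\tau_{\lfloor O_\nu(z,0)\rfloor}$ after a harmless $O(1)$ correction using again boundedness and subadditivity.

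Measurability is the delicate point, and this is where the introductory comment on the reduction trick enters: by an affine bijection between $\M$ and $\{e_1,\dots,e_K\}\subset\R^K$, I identify $u^+-u^-$ with the pair $(u^+,u^-)$, so that $g$ becomes a function of the difference $u^+-u^-$ alone in a larger ambient dimension. In this reformulation, the $\F$-measurability of $\omega\mapsto\mu^{a,b,\nu}(I,\omega)$ follows from \cite[Proposition A.1]{CDMSZ19}. Once all four properties are in place, Theorem \ref{thm:subadditive} provides an $\F$-measurable $\phi^{a,b,\nu}\colon\Omega\to\R$ and a full-measure event $\Omega_{a,b,\nu}$ on which
$t^{1-d}\mu^{a,b,\nu}(tQ',\omega)\to\phi^{a,b,\nu}(\omega)\L^{d-1}(Q')$ for every $(d-1)$-cube $Q'$. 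Choosing $Q'=(-\tfrac12,\tfrac12)^{d-1}$, and using the fact that competitors can be trivially extended to cubes of any height $\ge h$ in the $\nu$-direction without changing the infimum (a cylindrical extension by $u_0^{a,b,\nu}$ adds no jump interface), this yields the first limit in \eqref{eq:existence-limit} for $x=0$. The independence from the centering $tx$ follows from the same theorem, which guarantees the limit is the same for any sequence of cubes whose side length tends to infinity, regardless of their position in $H^\nu$; the cube $Q_t^\nu(tx)$ corresponds to a cube in $H^\nu$ shifted by $t\,p_\nu(x)$, and the standard inner/outer approximation by cubes on a rational grid closes the gap (cf.\ the remark after Theorem \ref{thm:subadditive}).

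To pass from fixed $(a,b,\nu)$ to a single full-measure event valid for all parameters, I intersect the events $\Omega_{a,b,\nu}$ over the finite set $(a,b)\in\M\times\M$ and a countable dense set $\ZZ\subset\Sd$, giving $\Omega'\defas\bigcap_{a,b,\nu\in\M\times\M\times\ZZ}\Omega_{a,b,\nu}$ of full probability. To extend to arbitrary $\nu\in\Sd$ I prove that on $\Omega'$ the map $\nu\mapsto g_{\rm hom}(\omega,a,b,\nu)$ (defined so far on $\ZZ$) extends continuously to $\Sd$. The key estimate is a uniform modulus of continuity obtained by comparing competitors on $Q_t^\nu$ and $Q_t^{\nu'}$ for $\nu'$ close to $\nu$: a near-optimal competitor on $Q_t^\nu$ can be modified on a thin tubular neighborhood of $\partial Q_t^\nu$ of width $O(t|\nu-\nu'|)$ so as to satisfy the boundary condition $u_0^{a,b,\nu'}$ on a slightly smaller cube $Q_{t(1-\delta)}^{\nu'}$; the upper bound $g\le c$ bounds the extra interfacial cost by $Ct^{d-1}|\nu-\nu'|$. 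Continuity in $(a,b)$ is automatic since $\M$ is finite and discrete.

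Finally, in the ergodic case the limit $\phi^{a,b,\nu}$ provided by the subadditive ergodic theorem is $\P$-a.s.\ constant, so $g_{\rm hom}$ is deterministic; the expectation formula \eqref{eq:ghom-ergodic} then follows from dominated convergence, since the scaled quantities $t^{1-d}\m_1(\omega)(u_0^{a,b,\nu},Q_t^\nu)$ are uniformly bounded by a constant depending only on $c$ and $d$. The main obstacle I anticipate is the continuity in $\nu$, since the gluing between $u_0^{a,b,\nu}$ and $u_0^{a,b,\nu'}$ in a thin boundary layer must be carried out carefully to avoid creating uncontrolled triple junctions or interfaces that scale worse than linearly in $|\nu-\nu'|$; this is where the multi-phase nature of $\M$ (as opposed to two phases) requires a genuinely $d$-dimensional construction rather than a one-dimensional interpolation.
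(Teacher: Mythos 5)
Your overall architecture matches the paper's (a subadditive process on $(d-1)$-dimensional rectangles, measurability via the $\{e_1,\dots,e_K\}$ identification, restriction to a countable set of directions plus continuity, and dominated convergence in the ergodic case). However, there are three genuine gaps that would need to be closed.

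\textbf{Fixed height vs.\ scaling height.} You define $\mu^{a,b,\nu}(I,\omega)$ via slabs $R_I^\nu=O_\nu(I\times(-h,h))$ of \emph{fixed} height $h$, whereas the paper uses cuboids $I^\nu=m_\nu O_\nu({\rm int}\,I\times s_{\rm max}(I)(-\tfrac12,\tfrac12))$ whose height \emph{scales} with $s_{\rm max}(I)$, so that $(t[-\tfrac12,\tfrac12)^{d-1})^\nu$ is precisely the cube $Q_{m_\nu t}^\nu$. With your definition the subadditive ergodic theorem gives a limit for the slab infima, but you then claim the slab and cube infima coincide ``without changing the infimum.'' The cylindrical extension by $u_0^{a,b,\nu}$ only gives the inequality $\m_1(\omega)(u_0^{a,b,\nu},Q_t^\nu)\leq \m_1(\omega)(u_0^{a,b,\nu},R_{[t]}^\nu)$; for the converse one would need to show that any near-minimizer on $Q_t^\nu$ can be cut down to the slab at $o(t^{d-1})$ cost. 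A good-slice argument via Fubini bounds $\Hd(S_u\cap\{x\cdot\nu=s\})$ on a good level $s\in(h/2,h)$ by $Ct^{d-1}/h$, and after the relative isoperimetric inequality the set you would have to fill in has measure bounded only by $(Ct^{d-1}/h)^{(d-1)/(d-2)}$, which is \emph{not} $o(t^{d-1})$ for fixed $h$. So you have only a $\limsup$-bound, not the limit claimed.

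\textbf{Stationarity for irrational $\nu$.} You propose to use $\tau_{\lfloor O_\nu(z,0)\rfloor}$ ``after a harmless $O(1)$ correction using boundedness and subadditivity.'' The Akcoglu--Krengel theorem as stated requires \emph{exact} stationarity $\mu(I+z,\omega)=\mu(I,\tau_z^\nu\omega)$; an additive $O(1)$ defect per unit shift does not fall under its hypotheses, and the accumulated error is not negligible at the scale of the theorem. The correct route (which the paper takes and which you could have taken given that you already introduce a countable dense set of directions) is to restrict the ergodic-theorem step to rational $\nu\in\Sd\cap\Q^d$, where $O_\nu\in\Q^{d\times d}$ and a suitable integer multiple $m_\nu O_\nu(z,0)\in\Z^d$ gives \emph{exact} stationarity, and only then extend by continuity.

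\textbf{Independence from $x$.} You assert that the same ergodic theorem ``guarantees the limit is the same for any sequence of cubes whose side length tends to infinity, regardless of their position in $H^\nu$,'' and then identify $Q_t^\nu(tx)$ with a cube in $H^\nu$ shifted by $t p_\nu(x)$. This only handles $x\in H^\nu$; for a general $x\in\R^d$ the center $tx$ leaves $H^\nu$, the boundary datum $u_{tx}^{a,b,\nu}$ jumps across the \emph{shifted} hyperplane $H^\nu(tx)$, and $Q_t^\nu(tx)$ does not arise as $tQ'$ for any $(d-1)$-cube $Q'\in\I$. This is precisely what the paper handles in a separate probabilistic step, combining Birkhoff's theorem (to produce, for a positive density of integers $i$, centers $i x$ at which the centered limit has already almost stabilized) with cube comparisons to control the passage from $i_k x$ to $k x$. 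Your proposal omits this entirely.

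The continuity argument in $\nu$ and the ergodic-case conclusion you sketch are fine, modulo the observation that the matrix map $\nu\mapsto O_\nu$ in \eqref{eq:matrix} is discontinuous at $-e_d$, so an additional argument (as in the paper's Step~5, showing the limit is independent of the orientation of the cube within $H^\nu$) is needed to obtain continuity of $g_{\rm hom}(\omega,a,b,\cdot)$ on all of $\Sd$ rather than just $\Sd\setminus\{-e_d\}$.
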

\begin{remark}\label{rem:rescaling}
Let $\Omega'$ be as in Proposition~\ref{prop:existence-limit} and let $\omega\in\Omega'$ and $(a,b,\nu)\in\M\times\M\times\Sd$ be arbitrary. Then~\eqref{eq:existence-limit} together with a change of variables implies that
	\begin{equation}\label{eq:limit-rescaled}
	g_{\rm hom}(\omega,a,b,\nu)=\frac{1}{\rho^{d-1}}\lim_{\e\to 0}\m_\e(\omega)(u_{x}^{a,b,\nu},Q_\rho^\nu(x))
	\end{equation}
for every $x\in\R^d$ and $\rho>0$. Indeed, for every $u\in BV (Q_\rho^\nu(x);\M)$ and every $\e>0$ let $u_\e\in BV\big(Q_{\frac{\rho}{\e}}^\nu\big(\frac{x}{\e}\big);\M\big)$ be given by $u_\e(y)\defas u(\e y)$. Then $u\in\Adm(u_{x}^{a,b,\nu};Q_\rho^\nu(x))$ if and only if $u_\e\in\Adm\big(u_{\frac{x}{\e}}^{a,b,\nu},Q_{\frac{\rho}{\e}}^\nu\big(\frac{x}{\e}\big)\big)$ and $S_{u_\e}=\frac{1}{\e}S_u$. Moreover, the change of variables $z=\frac{y}{\e}$ yields $E_\e(\omega)(u,Q_\rho^\nu(x))=E_1(\omega)(u_\e, Q_{\frac{\rho}{\e}}^\nu(\frac{x}{\e}))$. Setting $t_\e\defas\frac{\rho}{\e}$, $x^\rho\defas \rho x$ and passing to the infimum in $u$ we thus obtain 
	\begin{equation*}
	\m_\e(\omega)\big(u_{x}^{a,b,\nu},Q_\rho^\nu(x)\big)=\Big(\frac{\rho}{t_\e}\Big)^{d-1}\m_1(\omega)\big(u_{t_\e x^\rho}^{a,b,\nu},Q_{t_\e}^\nu(t_\e(x^\rho)\big).
	\end{equation*}
Hence~\eqref{eq:limit-rescaled} follows by dividing the above inequality by $\rho^{d-1}$ and applying Proposition~\ref{prop:existence-limit} with $t=t_\e$.
\end{remark}
We will prove Proposition~\ref{prop:existence-limit} first in the case $x=0$ by associating a suitable subadditive stochastic process to the minimization problem $\m_1(\omega)(\uzn,Q_t^\nu)$ and applying Theorem~\ref{thm:subadditive}.
To this end, for fixed $\nu\in\Sph^{d-1}\cap\Q^{d}$ we let $O_\nu$ be the orthogonal matrix induced by~\eqref{eq:matrix}. 
Since $\nu\in\mathbb{S}^{d-1}\cap\mathbb{Q}^d$ is a rational direction it follows that $O_\nu\in\mathbb{Q}^{d\times d}$, so that there exists an integer $m_\nu$ such that $m_\nu O_\nu(z,0)\in\Z^d$ for every $z\in\Z^{d-1}$.
For every $I=[p_1,q_1)\times\cdots\times[p_{d-1},q_{d-1})\in\mathcal{I}$ we denote by $s_{\rm max}(I)\defas\max_{i}|q_i-p_i|$ its maximal side length and define the open set $I^\nu\subset\R^d$ as
\begin{equation}\label{def:ndimintervals}
I^\nu\defas m_\nu O_\nu\Big({\rm int}\, I\times s_{\rm max}(I)(-1/2,1/2)\Big),
\end{equation}
where ${\rm int}$ denotes the $(d-1)$-dimensional interior. 
Then we define a function $\mu^{a,b,\nu}:\mathcal{I}\times\Omega\to\R$ by setting
\begin{equation}\label{def:subadprocess}
\mu^{a,b,\nu}(I,\omega)\defas\frac{1}{m_\nu^{d -1}}\inf\{E_1(\omega) (u,I^\nu)\colon u\in\Adm(u_0^{a,b,\nu},I^\nu)\}.
\end{equation}
\begin{lemma}\label{lem:subadditive-process}
Let $g:\Omega\times\R^d\times\M\times\M\times\Sph^{d-1}\to[0,+\infty)$ be an admissible random surface tension which is $\Z^d$-stationary. For every $(a,b,\nu)\in\M\times\M\times(\Sd\cap\Q^d)$ let $\mu^{a,b,\nu}(I,\omega)$ be as in~\eqref{def:subadprocess}. Then there exists a measure-preserving group action $\{\tau_z^\nu\}_{z\in\Z^{d-1}}$ such that $\mu^{a,b,\nu}$ is a subadditive process with respect to $\{\tau_{z}^\nu\}_{z\in\Z^{d-1}}$ satisfying
\begin{equation}\label{eq:l_inftybound}
0\leq\mu^{a,b,\nu}(I,\omega)\leq c  \mathcal{L}^{d-1}(I)
\end{equation}
for a.e. $\omega\in\Omega$ and every $I\in\I$.
\end{lemma}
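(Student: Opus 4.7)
The plan is to lift the given $\Z^d$-action to a $\Z^{d-1}$-action adapted to the hyperplane $H^\nu$, and then to verify the four conditions in Definition \ref{sub_proc} in turn, with subadditivity being the only substantive point. Concretely, I will set $\tau_z^\nu \defas \tau_{m_\nu O_\nu(z,0)}$ for $z\in\Z^{d-1}$. Since $\nu\in\Sd\cap\Q^d$ and $m_\nu$ was chosen so that $m_\nu O_\nu(z,0)\in\Z^d$, this is well-defined, and measurability, measure-preservation, and the group property all transfer directly from $\{\tau_z\}_{z\in\Z^d}$.

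For measurability of $\omega\mapsto\mu^{a,b,\nu}(I,\omega)$ I would invoke joint measurability of $g$, arguing as in standard references on random integral functionals. For the pointwise bound, I will test with $u_0^{a,b,\nu}$ itself in~\eqref{def:subadprocess}, which lies in $\Adm(u_0^{a,b,\nu},I^\nu)$. Since $S_{u_0^{a,b,\nu}}\cap I^\nu=H^\nu\cap I^\nu$ and this hyperplane section has $\Hd$-measure $m_\nu^{d-1}\L^{d-1}(I)$, the upper bound $g\leq c$ yields $\mu^{a,b,\nu}(I,\omega)\leq c\,\L^{d-1}(I)$, giving \eqref{eq:l_inftybound}.

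Stationarity reduces to a translation argument. Since $s_{\max}$ is translation invariant, $(I+z)^\nu=I^\nu+m_\nu O_\nu(z,0)$. Setting $y_z\defas m_\nu O_\nu(z,0)$ and noting $y_z\cdot\nu=0$, for every competitor $u\in\Adm(u_0^{a,b,\nu},(I+z)^\nu)$ the shifted map $\tilde u(\cdot)=u(\cdot+y_z)$ lies in $\Adm(u_0^{a,b,\nu},I^\nu)$ because $u_0^{a,b,\nu}$ is invariant under translations parallel to $H^\nu$. The $\Z^d$-stationarity of $g$ then gives $E_1(\omega)(u,(I+z)^\nu)=E_1(\tau_z^\nu\omega)(\tilde u,I^\nu)$, and passing to the infimum yields the stationarity identity.

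The main obstacle is subadditivity, because the heights of the boxes $I_i^\nu$ differ from that of $I^\nu$ when $s_{\max}(I_i)<s_{\max}(I)$, so a naive gluing does not produce a function defined on all of $I^\nu$. Given a finite partition $I=\bigcup_i I_i$ and competitors $u_i\in\Adm(u_0^{a,b,\nu},I_i^\nu)$, my plan is to define $u\defas u_i$ on $I_i^\nu$ and $u\defas u_0^{a,b,\nu}$ on $I^\nu\setminus\bigcup_i I_i^\nu$. The key geometric observation is that the column above and below $I_i$ inside $I^\nu$, namely $m_\nu O_\nu(I_i\times s_{\max}(I)(-1/2,1/2))$, meets $H^\nu$ only in the base $m_\nu O_\nu(I_i\times\{0\})\subset I_i^\nu$, so in the portion of this column lying outside $I_i^\nu$ the padding $u_0^{a,b,\nu}$ is constant (equal to $a$ above $H^\nu$, to $b$ below) and creates no jump. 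The lateral matching $u_i=u_0^{a,b,\nu}$ near $\partial I_i^\nu$ ensures that no interface is created between adjacent columns and that $u\in\Adm(u_0^{a,b,\nu},I^\nu)$. Hence $E_1(\omega)(u,I^\nu)=\sum_i E_1(\omega)(u_i,I_i^\nu)$, and passing to the infimum in each $u_i$ and dividing by $m_\nu^{d-1}$ yields subadditivity.
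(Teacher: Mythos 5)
Your proposal is correct and follows essentially the same route as the paper: the group action $\tau_z^\nu=\tau_{m_\nu O_\nu(z,0)}$, the test function $u_0^{a,b,\nu}$ for the upper bound, the translation argument for stationarity, and the gluing construction for subadditivity all match, and in the subadditivity step you in fact spell out the geometry of the "columns" more explicitly than the paper does. The one place where you are appreciably thinner than the paper is measurability. You treat it as a citation to "standard references on random integral functionals," but the relevant standard result, \cite[Proposition A.1]{CDMSZ19}, is proved under the assumption that $g$ depends on the traces only through the difference $u^+-u^-$; here $g$ depends on $(u^+,u^-)$ separately, which is exactly the additional generality the paper advertises. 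The paper therefore proves a dedicated statement (Proposition~\ref{prop:measurable}) that first reduces to $\M=\{e_1,\ldots,e_k\}$ so that the map $u^+-u^-\mapsto(u^+,u^-)$ becomes well defined, then re-expresses the energy through a function $h$ of $u^+-u^-$ and invokes \cite[Lemma A.9]{CDMSZ22} together with the projection theorem. So measurability is not a routine citation in this setting; it is a genuine (if modest) step that the blind proof glosses over.
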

\begin{proof}
Let $(a,b,\nu)\in\M\times\M\times(\Sd\cap\Q^d)$ be fixed; the fact that for every $I\in\I$ the map $\omega\mapsto\mu^{a,b,\nu}(I,\omega)$ is $\F$-measurable, follows by applying Proposition~\ref{prop:measurable} with $A=I^\nu$ and $v=u_0^{a,b,\nu}$. 
Moreover, since $g(\omega)\in\A_c$ for a.e. $\omega\in\Omega$, we obtain~\eqref{eq:l_inftybound} by testing the minimization problem with the function $u_0^{a,b,\nu}$ itself.
It remains to show that $\mu^{a,b,\nu}:\I\times\Omega\to [0,+\infty)$ is stationary and subadditive. 

\smallskip 
To prove the stationarity of the process, we define for every $z\in\Z^{d-1}$ the vector $z^\nu:=m_\nu O_\nu(z,0)\in\Z^d$. Setting  $\tau_z^\nu:=\tau_{z^\nu}$ we obtain a discrete measure-preserving group action $\{\tau_z^\nu\}_{z\in\Z^{d-1}}$. Note that for every $I\in\mathcal{I}$ and every $z\in\R^{d-1}$ we have $(I+z)^\nu=I^\nu+z^\nu$. For every $u\in BV((I+z)^\nu;\M)$ let us set $u_z\defas u(\cdot\,+z^\nu)$. It is immediate to check that $u\in \Adm(u_0^{a,b,\nu},(I+z)^\nu)$ if and only if $u_z\in\Adm(u_0^{a,b,\nu},I^\nu)$. 
Moreover, the stationarity of $g$ with respect to $\{\tau_z\}_{z\in\Z^d}$ together with a change of variables yields $E_1(\omega) (u,(I+z)^\nu)=E_1(\tau_z^\nu\omega)(u_z,I^\nu)$.
Thus, we conclude by minimization that $\mu^{a,b,\nu}(I+z,\omega)=\mu^{a,b,\nu}(I,\tau_z^\nu\omega)$, which implies the stationarity of the process with respect to the lower-dimensional group action $\{\tau_z^\nu\}_{z\in\mathbb{Z}^{d-1}}$. 

\smallskip
To show that $\mu^{a,b,\nu}$ is subadditive, let $I\in\mathcal{I}$ be arbitrary and let $(I_i)_{i=1}^k\subset\mathcal{I}$ be pairwise disjoint and such that $I=\bigcup_{i=1}^k I^i$. Fix $\eta>0$ and for any $i\in\{1,\ldots, k\}$ let $u_i\in \Adm(\uzn,I_i^\nu)$ be such that 
\begin{align}\label{subad:almost optimal}
\Egw (u_i, I_i^\nu)\leq \mu^{a,b,\nu}(I_i,\omega)+k^{-1}\eta.
\end{align}
Since also the $d$-dimensional cuboids $I_i^\nu$ are pairwise disjoint, we can define a function $u\in BV(I^\nu;\M)$ by setting 
\begin{equation*}
u(x):=
\begin{cases}
u_i(x) &\text{if}\ x\in I_i^\nu\; \text{ for some}\ 1\leq i\leq k,\\
\uzn(x) &\text{otherwise}.
\end{cases}
\end{equation*}
Since $s_{\rm max}(I_i)\leq s_{\rm max}(I)$ for all $1\leq i\leq k$, all cuboids $I_i^\nu$ are contained in $I^\nu$, so that the function $u$ belongs to $\Adm(\uzn,I^\nu)$. Moreover, since $S_{\uzn}=H^\nu$, thanks to the boundary conditions satisfied by each $u_i$ and the equality $I=\bigcup_{i=1}^kI_i$ we have $S_u\cap I^\nu=S_u\cap\big(\bigcup_{i=1}^k\overline{I_i^\nu}\big)=\bigcup_{i=1}^kS_{u_i}$. Thus, using the additivity of $\Egw$ as a set function, from~\eqref{subad:almost optimal} we infer
\begin{align*}
\mu^{a,b,\nu}(I,\omega)\leq \Egw (u,I^\nu)=\sum_{i=1}^k \Egw (u_i,I_i^\nu)\leq\mu^{a,b,\nu}(I_i,\omega)+\eta
\end{align*}
and we conclude by the arbitrariness of $\eta>0$.
\end{proof}
We are now in a position to prove Proposition~\ref{prop:existence-limit}.
\begin{proof}[Proof of Proposition~\ref{prop:existence-limit}]
The proof is divided in several steps. We start by establishing the existence of the limit for $x=0$ and any fixed rational direction $\nu\in\Sd\cap\Q^d$.

\smallskip
\textbf{Step 1.} We show that there exists $\widehat{\Omega}\subset\Omega$ with $\P(\widehat\Omega)=1$ and such that for every $\omega\in\widehat{\Omega}$ we have
	\begin{equation}\label{eq:existence-rational}
	g_{\rm hom}(\omega,a,b,\nu)=\lim_{t\to+\infty}\frac{1}{t^{d-1}}\m_1(\omega)(\uzn,Q_t^\nu)\;\text{ for every}\ (a,b,\nu)\in\M\times\M\times\Sd\cap\Q^d.
	\end{equation}
Thanks to Lemma~\ref{lem:subadditive-process}, for fixed $\nu\in\Sd\cap\Q^d$ and $(a,b)\in\M\times\M$ we can apply Theorem~\ref{thm:subadditive}
to deduce the existence of a set $\Omega^{a,b,\nu}\subset\Omega$ with $\P(\Omega^{a,b,\nu})=1$ and of an $\F$-measurable function $g_{\rm hom}(\cdot\,,a,b,\nu):\Omega\to[0,+\infty)$ such that~\eqref{eq:existence-rational} holds true for every $\omega\in\Omega^{a,b,\nu}$. The set $\widehat{\Omega}$ is then obtained by taking the countable intersection over all $\Omega^{a,b,\nu}$ with $(a,b)\in\M\times\M$ and $\nu\in\Sd\cap\Q^d$. 

\smallskip
\textbf{Step 2.} We show that~\eqref{eq:existence-rational} still holds for $\nu\in\Sd\setminus\Q^d$ and $\omega\in\widehat{\Omega}$. To this end, for every $\nu\in\Sd$ and fixed $(a,b)\in\M\times\M$ we set
	\begin{equation}\label{def:lim-sup-inf}
	\overline{g}(\omega,\nu)\defas\limsup_{t\to+\infty}\frac{\m_1(\omega)(\uzn,Q_t^\nu)}{t^{d-1}}\quad\text{and}\quad\underline{g}(\omega,\nu)\defas\liminf_{t\to+\infty}\frac{\m_1(\omega)(\uzn,Q_t^\nu)}{t^{d-1}}.
	\end{equation}
Note that $\overline{g}(\omega,\nu)=\underline{g}(\omega,\nu)$ for every $\omega\in\widehat{\Omega}$ and every $\nu\in\Sd\cap\Q^d$. In view of Step 1 it therefore suffices to show that for every $\omega\in\widehat{\Omega}$ the restriction of $\overline{g}(\omega,\cdot)$ and $\underline{g}(\omega,\cdot)$ to $\Sd_+\defas\Sd\sm\{-e_d\}$ is continuous. Then we can argue by density of $\Sd_+\cap\Q^d$ in $\Sd_+$. Moreover, as a consequence we obtain the required continuity of $g_{\rm hom}$ at least on $\Sd_+$. 
Let $\nu\in\Sd_+$ and let $(\nu_j)\subset\Sd_+$ such that $\nu_j\to\nu$ as $j\to+\infty$ and let $\alpha\in (0,\frac{1}{2})$ be arbitrary. By the continuity of the map $\nu\mapsto O_\nu$ on ${\Sph}^{d-1}_+$ we have that
	\begin{equation}\label{inclusion:cubes_0}
	Q_{(1-\alpha)t}^{\nu_j}\wcont Q_t^\nu\wcont Q_{(1+\alpha)t}^{\nu_j}
	\end{equation}
for every $t>0$ and for $j$ sufficiently large (depending on $\alpha$). Using the second inclusion in~\eqref{inclusion:cubes_0} we can extend any competitor $u\in\Adm(\uzn, Q_t^\nu)$ to a competitor $u\in\Adm(u_0^{a,b,\nu_j},Q_{(1+\alpha)t}^{\nu_j})$ by setting $u\defas u_0^{a,b,\nu_j}$ on $Q_{(1+\alpha)t}^{\nu_j}\setminus Q_t^\nu$. Since $g(\omega)\in\A_c$, we have
	\begin{equation}\label{est:extension-competitor}
	E_1(\omega)(u,Q_{(1+\alpha)t}^{\nu_j})\leq E_1(\omega)(u,Q_t^\nu)+c\Big(\Hd\big(H^\nu\cap (Q_{(1+\alpha)t}^{\nu_j}\setminus Q_t^\nu)\big)+\Hd\big(\partial Q_t^\nu\cap\{u_0^{a,b,\nu}\neq u_{0}^{a,b,\nu_j}\}\big)\Big).
	\end{equation}
Note that $\Hd\big(H^\nu\cap (Q_{(1+\alpha)t}^{\nu_j}\setminus Q_t^\nu)\big)\leq c_\alpha t^{d-1}$ and $\Hd\big(\partial Q_t^\nu\cap\{u_0^{a,b,\nu}\neq u_{0}^{a,b,\nu_j}\}\big)\leq c_j  t^{d-1}$ for every $t>0$ with $c_\alpha\to 0$ as $\alpha\to 0$ and $c_j\to 0$ as $j\to +\infty$. Passing to the infimum over $u$, dividing by  $((1+\alpha)t)^{d-1}$  and passing to the limsup as $t\to+\infty$ we thus deduce from~\eqref{est:extension-competitor} that
	\begin{equation*}
	\overline{g}(\omega,\nu_j)\leq \overline{g}(\omega,\nu)+c_\alpha+c_j.
	\end{equation*}
Letting first $j\to+\infty$ and then $\alpha\to 0$ this implies that $\limsup_{j}\overline{g}(\omega,\nu_j)\leq\overline{g}(\omega,\nu)$ A similar argument now using the first inclusion in~\eqref{inclusion:cubes_0} yields $\overline{g}(\omega,\nu)\leq\liminf_j\overline{g}(\omega,\nu_j)$, which yields the continuity of $\overline{g}(\omega,\cdot)$ on ${\Sph}^{d-1}_+$. The argument for $\underline{g}(\w,\cdot)$ is analogous.

\smallskip
\textbf{Step 3.} As an intermediate step we prove the invariance of $\widehat{\Omega}$ and $g_{\rm  hom}$ under $\{\tau_z\}_{z\in\Z^d}$, that is, we show that $\tau_z(\widehat{\Omega})=\widehat{\Omega}$ and
	\begin{equation}\label{eq:invariance-ghom}
	g_{\rm hom}(\tau_z\omega,a,b,\nu)=g_{\rm hom}(\omega,a,b,\nu)
	\end{equation}
for every $\omega\in\widehat{\Omega}$ and every $(a,b,\nu)\in\M\times\M\times\Sd$.

For $(a,b,\nu)\in\M\times\M\times\Sd$ fixed let $\overline{g},\underline{g}$ be as in~\eqref{def:lim-sup-inf} and let us show that
	\begin{equation}\label{eq:ghom-shift}
	\overline{g}(\tau_z\omega,\nu)\leq g_{\rm hom}(\omega,a,b,\nu)\leq \underline{g}(\tau_z\omega,\nu) \; \text{ for every $ \omega\in\widehat{\Omega}$ and $z\in\Z^d$}.
	\end{equation}	 
For any $z\in\Z^d$ and any $u\in BV(Q_t^\nu;\M)$ set $u_z(x)\defas u(x+z)$ for every $x\in Q_t^\nu(-z)$. Then $u\in\Adm(u_0^{a,b,\nu},Q_t^\nu)$ if and only if $u_z\in\Adm(u_{-z}^{a,b,\nu},Q_t^\nu(-z))$. Moreover, a change of variables together with the stationarity of $g$ gives
	\begin{equation}\label{eq:shift}
	E_1(\omega)(u,Q_t^\nu)=E_1(\tau_z\omega)(u_z,Q_t^\nu(-z)).
	\end{equation}
We now choose $\tilde{t}>t+2|z|$, so that $Q_t^\nu(-z)\wcont Q_{\tilde{t}}^\nu$ and we extend $u_z$ to $Q_{\tilde{t}}^\nu$ by setting $u_z\defas \uzn$ in $Q_{\tilde{t}}^\nu\setminus Q_t^\nu(-z)$. Then $u_z\in\Adm(\uzn,Q_{\tilde{t}}^\nu)$ and the boundary conditions satisfied by $u_z$ on $\partial Q_t^\nu(-z)$ together with the bounds on $g$ ensure that 
	\begin{equation}\label{est:extension-shift}
	\begin{split}
	E_1(\tau_z\omega)(u_z,Q_{\tilde{t}}^\nu) &\leq E_1(\tau_z\omega)(u_z,Q_t^\nu(-z))\\
	&\hspace*{1em}+c\Big(\Hd\big( H^\nu \cap(Q_{\tilde{t}}^\nu\setminus Q_t^\nu(-z))\big)+\Hd\big(\partial Q_t^\nu(-z)\cap\{u_{-z}^{a,b,\nu}\neq u_0^{a,b,\nu}\}\big)\Big)\\
	&\leq E_1(\tau_z\omega)(u_z,Q_t^\nu(-z))+C(|z|+|\tilde{t}-t|+1)\tilde{t}^{d-2}.
	\end{split}
	\end{equation}
Combining~\eqref{eq:shift} and~\eqref{est:extension-shift}, passing to the infimum in $u$, dividing by $\tilde{t}^{d-1}$, and passing to the limsup in $\tilde{t}$ and the limit in $t$ we deduce that $\overline{g}(\tau_z\omega,\nu)\leq g_{\rm hom}(\omega,a,b,\nu)$. By exchanging the roles of $\omega$ and $\tau_z\omega$ in~\eqref{eq:shift} and replacing $z$ by $-z$ we obtain in a similar way that $g_{\rm hom}(\omega,a,b,\nu)\leq\underline{g}(\tau_z\omega,\nu)$, hence~\eqref{eq:ghom-shift} is proved. From~\eqref{eq:ghom-shift} we deduce that $\tau_z\widehat{\Omega}\subset\widehat{\Omega}$ for every $z\in\Z^d$, while the opposite inclusion follows from the group property. Moreover,~\eqref{eq:ghom-shift} implies~\eqref{eq:invariance-ghom}.

\smallskip
If $\{\tau_z\}_{z\in\Z^d}$ is ergodic, then~\eqref{eq:invariance-ghom} implies that $g_{\rm hom}$ is independent of $\omega$. Then~\eqref{eq:ghom-ergodic} follows thanks to~\eqref{eq:l_inftybound} by applying the dominated convergence theorem.

\smallskip
\textbf{Step 4.} Next, we consider the case $x\neq 0$. Consider for the moment a cube $Q_\rho^{\nu}(x)$ with rational direction $\nu\in\mathbb{S}^{d-1}\cap\Q^d$, $x\in\mathbb{Z}^d\backslash\{0\}$ and $\rho\in\mathbb{Q}$. We show that there exists $\Omega_{\rho,x}^\nu\subset\widehat{\Omega}$ with $\P(\Omega_{\rho,x}^\nu)=1$ and such that
	\begin{equation}\label{existence:limit-x-prelim}
	\lim_{k\to+\infty}\frac{\m_1(\omega)(u_{kx}^{a,b,\nu},Q_{k\rho}(kx))}{(k\rho)^{d-1}}=g_{\rm hom}(\omega,a,b,\nu)\;\text{ for every}\ \omega\in\Omega_{\rho,x}^\nu.
	\end{equation}
To this end, given $\delta>0$ and $K\in\N$, we define the events
\begin{equation*}
	\mathcal{Q}_{K,\delta}:=\left\{\w\in\Omega:\;\sup_{t\geq\frac{K}{2}}\left|(t\rho)^{1-d}\m_1(\w)(u_{0}^{a,b,\nu},Q^{\nu}_{t\rho})-g_{\rm hom}(\w,a,b,\nu)\right|\leq\delta\right\}.
\end{equation*}
From Steps 1+2 we know that $\mathds{1}_{\mathcal{Q}_{K,\delta}}(\omega)\to 1$ for every $\omega\in\widehat\Omega$ when $K\to +\infty$. Let us denote by $\mathcal{J}_x$ the $\sigma$-algebra of invariant sets for the measure preserving map $\tau_x$. The monotone convergence theorem for the conditional expectation yields
\begin{equation*}
	1=\mathbb{E}[1|\mathcal{J}_x]=\lim_{K\to +\infty}\mathbb{E}[\mathds{1}_{\mathcal{Q}_{K,\delta}}|\mathcal{J}_x],
\end{equation*} 
so that almost surely we find $K_0=K_0(\w,\delta)$ such that $1-\delta\leq\mathbb{E}[\mathds{1}_{\mathcal{Q}_{K_0,\delta}}|\mathcal{J}_x](\w)\leq 1$.

Next, due to Birkhoff's ergodic theorem, almost surely, there exists $k_0=k_0(\w,\delta)$ such that, for any $k\geq \frac{k_0}{2}$,
\begin{equation}\label{est:Birckhoff}
	\left|\frac{1}{k}\sum_{i=1}^k\mathds{1}_{\mathcal{Q}_{K_0,\delta}}(\tau_{i x}\w)-\mathbb{E}[\mathds{1}_{\mathcal{Q}_{K_0,\delta}}|\mathcal{J}_x](\w)\right|\leq\delta.
\end{equation}
More precisely, we find $\Omega_{x,\rho}^{\nu,\delta}$ such that~\eqref{est:Birckhoff} holds for every $\omega\in\Omega_{x,\rho}^{\nu,\delta}$.

For fixed $k\geq\max\{k_0,K_0\}$ we denote by $N_k$ the maximal integer such that for all $i=k+1,\dots,k+N_k$ we have $\tau_{ix}(\w)\notin \mathcal{Q}_{K_0,\delta}$. In order to bound $N_k$ let $\tilde{k}$ be the number of non-zero terms in the sequence $\{\mathds{1}_{\mathcal{Q}_{K_0,\delta}}(\tau_{ix}(\w))\}_{i=1}^{k}$. The definition of $N_k$ implies that
\begin{equation*}
	\delta\geq\left|\frac{\tilde{k}}{k+N_k}-\mathbb{E}[\mathds{1}_{\mathcal{Q}_{K_0,\delta}}|\mathcal{J}_x](\w)\right|=\left|1-\mathbb{E}[\mathds{1}_{\mathcal{Q}_{K_0,\delta}}|\mathcal{J}_x](\w)-\frac{k+N_k-\tilde k}{k+N_k}\right|\geq \frac{k+N_k-\tilde{k}}{k+N_k}-\delta.
\end{equation*}
Since $k-\tilde{k}\geq 0$ and without loss of generality $\delta<\frac{1}{4}$, we obtain the upper bound $N_k< 4k\delta$. 
Hence for any $k\geq\max\{k_0,K_0\}$ and $\tilde{N}_k=\lceil 4k\delta\rceil$ there exists $i_k=i_k(\delta)\in \{k+1,\ldots,k+\tilde{N}_k\}$ such that $\tau_{i_kx}(\w)\in \mathcal{Q}_{K_0,\delta}$. Together with the stationarity of $g$ and~\eqref{eq:invariance-ghom} in Step 3 this implies that
\begin{equation}\label{eq:epsestimate}
	\left|(t\rho)^{1-d}\m_1(\w)(u_{i_kx}^{a,b,\nu},Q^{\nu}_{t\rho}(i_kx))-g_{\rm hom}(\w,a,b,\nu)\right|\leq\delta\;\text{ for all $t\geq \frac{K_0}{2}$.}
\end{equation}
Recall that $|i_k-k|\leq \lceil 4k\delta\rceil$. In what follows, we estimate the error when we replace $i_k$ by $k$ in the above estimate. Define $\alpha_k\defas k-2\rho^{-1}|x|(i_k-k)$ and $\beta_k\defas k+2\rho^{-1}|x|(i_k-k)$, so that 
	\begin{equation}\label{inclusion:cubes}
	Q_{\alpha_k\rho}^\nu(i_kx)\wcont Q_{k\rho}^\nu(kx)\wcont Q_{\beta_k\rho}^\nu(i_kx).
	\end{equation}
Extending any competitor $u\in\Adm(u_{kx}^{a,b,\nu},Q^{\nu}_{k\rho}(kx))$ onto $Q_{\beta_k\rho}^{\nu}(i_kx)$ by $u_{i_kx}^{a,b,\nu}$, we find that
\begin{align}\label{eq:almost_kx}
	\m_1(\w)(u_{i_kx}^{a,b,\nu},Q_{\beta_k\rho}^{\nu}(i_kx))\leq \m_1(\w)(u_{kx}^{a,b,\nu},Q^{\nu}_{k\rho}(kx))&+C |(i_kx-kx)\cdot\nu|(k\rho)^{d-2}\nonumber
	\\
	&+C\mathcal{H}^{d-1}(H^{\nu}(i_kx)\cap Q_{\beta_k\rho}^{\nu}(i_kx)\setminus  Q^{\nu}_{k\rho}(kx)),
\end{align} 
where the first error term stems from the newly created discontinuity set on $\partial Q^{\nu}_{k\rho}(kx)$. To control the second error term, note that thanks to~\eqref{inclusion:cubes} we have
	\begin{align*}
	\mathcal{H}^{d-1}(H^{\nu}(i_kx)\cap Q_{\beta_k\rho}^{\nu}(i_kx)\setminus  Q^{\nu}_{k\rho}(kx)) &\leq\Hd(H^\nu(i_kx)\cap Q_{\beta_k\rho}^\nu(i_kx)\sm Q_{\alpha_k\rho}^\nu(i_kx))\\
	&\leq C\rho^{d-1}(\beta_k^{d-1}-\alpha_k^{d-1})\leq C(k\rho)^{d-2}|x|(i_k-k),
	\end{align*}
provided $k$ is sufficiently large such that $k\rho\geq |x|(i_k-k)$. Inserting also the bound $i_k-k\leq \lceil 4k\delta\rceil$ and $\beta_k\geq k$, and dividing by $(\beta_k\rho)^{d-1}$ we continue the estimate \eqref{eq:almost_kx} to obtain
\begin{equation}\label{eq:firstbound}
	\frac{\m_1(\w)(u_{i_kx}^{a,b,\nu},Q_{\beta_k\rho}^{\nu}(i_kx))}{(\beta_k\rho)^{d-1}}\leq \frac{\m_1(\w)(u_{kx}^{a,b,\nu},Q^{\nu}_{k\rho}(kx))}{(k\rho)^{d-1}}+C\frac{\lceil 4k\delta\rceil}{\rho k}|x|.
\end{equation}
Using instead the first inclusion in~\eqref{inclusion:cubes} the same argument as above leads to
\begin{equation}\label{eq:otherbound}
	\frac{\m_1(\w)(u_{kx}^{a,b,\nu},Q^{\nu}_{k\rho}(kx))}{(k\rho)^{d-1}}\leq\frac{\m_1(\w)(u_{i_kx}^{a,b,\nu},Q^{\nu}_{\alpha_k\rho}(i_kx))}{(\alpha_k\rho)^{d-1}}+C\frac{\lceil 4k\delta\rceil}{\rho k}|x|. 
\end{equation}
Now if $\delta$ is small enough (depending only on $x$ and $\rho$) we have $\beta_k\geq \alpha_k\geq\frac{k}{2}\geq\frac{K_0}{2}$, so that \eqref{eq:epsestimate} holds for the choices $t=\alpha_k$ and $t=\beta_k$. Combining \eqref{eq:epsestimate} with \eqref{eq:firstbound}, \eqref{eq:otherbound} and the triangle inequality, we infer
\begin{equation*}
	\limsup_{k\to +\infty}\left|\frac{\m_1(\w)(u_{kx}^{a,b,\nu},Q^{\nu}_{k\rho}(kx))}{(k\rho)^{d-1}}-g_{\rm hom}(\w,a,b,\nu)\right|\leq \frac{4C\delta}{\rho}|x|\;\text{ for every}\ \omega\in\Omega_{x,\rho}^{\nu,\delta}.
\end{equation*}
Choosing $\Omega_{x,\rho}^\nu$ to be the intersection over all $\Omega_{x,\rho}^{\nu,\delta}$ with $\delta\in\Q_+$ we thus obtain~\eqref{existence:limit-x-prelim}. 

\smallskip
We finally define $\Omega'$ to be the countable intersection over all $\Omega_{x,\rho}^\nu$ with $x\in\Z^d\sm\{0\}$, $\rho\in\Q_+$ and $\nu\in\Sd\cap\Q^d$, so that $\omega\in\Omega'$ satisfy~\eqref{existence:limit-x-prelim} simultaneously for all such $x,\rho,\nu$. For fixed $x\in\Z^d\sm\{0\}$, $\rho\in\Q_+$, and $\nu\in\Sd\cap\Q^d$ we extend~\eqref{existence:limit-x-prelim} to arbitrary sequences $t_k\to+\infty$ as follows: for any $\delta\in(0,1)\cap\Q$ and $t_k$ sufficiently large such that $1-\delta<(\tfrac{t_k}{t_k+1}-\frac{2|x|}{\rho (t_k+1)})$ and $1+\frac{2|x|}{\rho t_k}<1+\delta$ we have 
$$Q^{\nu}_{(1-\delta)\rho\lceil t_k\rceil}(\lceil t_k\rceil x)\wcont Q_{\rho t_k}^\nu(t_kx)\wcont Q_{(1+\delta)\rho\lceil t_k\rceil}^\nu(\lceil t_k\rceil x).$$ 
Thus, a similar comparison argument as in~\eqref{eq:firstbound}-\eqref{eq:otherbound} gives 
	\begin{align*}
	\m_1(\omega)\big(u_{\lceil t_k\rceil x}^{a,b,\nu},Q_{(1+\delta)\rho\lceil t_k\rceil}^\nu(\lceil t_k\rceil x)\big)-C\delta (\rho t_k)^{d-1}|x| &\leq\m_1\big(u_{t_kx}^{a,b,\nu},Q_{\rho t_k}(t_kx)\big)\\
	&\leq\m_1(\omega)\big(u_{\lfloor t_k\rfloor x}^{a,b,\nu},Q^{\nu}_{(1-\delta)\rho\lceil t_k\rceil}(\lceil t_k\rceil x)\big)+C\delta (\rho t_k)^{d-1}|x|.
	\end{align*}
Applying~\eqref{existence:limit-x-prelim} and letting $\delta\to 0$ thus gives the desired convergence along the sequence $(t_k)$. This further allows us to extend the convergence to rational centers $x$ be writing $x=qz$ with $z\in\Z^d$ and $q\in\Q_+$, replacing $t_k$ by $qt_k$ and $\rho$ by $\frac{\rho}{q}\in\Q$ in~\eqref{existence:limit-x-prelim}. The convergence to non-rational directions $\nu$ follows by a similar argument as in Step 2, while the convergence for non-rational centers $x$ follows by a similar perturbation argument as in~\eqref{eq:firstbound} and~\eqref{eq:otherbound}.

\smallskip
\textbf{Step 5} Finally, we show the full continuity of the map $\nu\mapsto g_{\rm hom}(\w,a,b,\nu)$. The goal is to show that the limit $g_{\rm hom}$ does not depend on the orientation of the cube $Q^{\nu}_{t}$ inside the hyperplane $H^{\nu}$. This implies that we can equivalently use a orthogonal matrix-valued function $\nu\mapsto \widetilde{O}_{\nu}$ with $\widetilde{O}_{\nu}(-e_d)=\nu$ that is discontinuous at $\nu=e_d$ and by repeating Step 2 we obtain another version of $g_{\rm hom}(\w,a,b,\nu)$ that is continuous on $\mathbb{S}^{d-1}\setminus\{e_d\}$ and coincides with $g_{\rm hom}(\w,a,b,\nu)$. As we already know from Step 2 that $\nu\mapsto g_{\rm hom}(\w,a,b,\nu)$ is continuous on $\mathbb{S}^{d-1}\setminus\{-e_d\}$, this yields the continuity at all directions $\nu$.

So let us fix another orthogonal matrix $\widetilde{O}_{\nu}$ such that $\widetilde{O}_{\nu}(-e_d)=\nu$ that generates another family of cubes $\widetilde{Q}^{\nu}_t(x)$ and a possibly different function $\widetilde{g}_{\rm hom}(\w,a,b,\nu)$ satisfying \eqref{eq:existence-limit}. Given $k\in\N$, we define the family of cubes $\widetilde{\mathcal{Q}}_k=\{\widetilde{Q}^{\nu}_k(kz):\,z\in\Z^{d-1}\}$ and for $R\gg 1$ the set
\begin{equation*}
Q_{R,k}=\overline{\bigcup\{Q\in\widetilde{\mathcal{Q}}_k:\,Q\subset\subset Q^{\nu}_{Rk}\}}.	
\end{equation*}
Repeating the argument for subadditivity used in the proof of Lemma \ref{lem:subadditive-process} we deduce that
\begin{equation*}
	\m_1(\w)(u_0^{a,b,\nu},Q^{\nu}_{Rk})\leq \sum_{Q\in\widetilde{\mathcal{Q}}_k:\,Q\subset\subset Q^{\nu}_{Rk}}\m_1(\w)(u_0^{a,b,\nu},Q)+C\underbrace{\mathcal{H}^{d-1}(H^{\nu}\cap Q^{\nu}_{Rk}\setminus Q_{R,k})}_{\leq C(Rk)^{d-2}k}.
\end{equation*}
Dividing the above estimate by $(Rk)^{d-1}$ and noting that in the above sum we have at most $R^{d-1}$ terms, letting $k\to +\infty$ it follows that
\begin{equation*}
	g_{\rm hom}(\w,a,b,\nu)\leq \widetilde{g}_{\rm hom}(\w,a,b,\nu)+CR^{-1}.
\end{equation*}
Since $R$ can be made arbitrarily large, we conclude that $g_{\rm hom}\leq \widetilde{g}_{\rm hom}$. Reversing the roles of the cubes, also the reverse inequality holds true, which concludes the proof of continuity of $g_{\rm hom}$.
\end{proof}

\subsection{Proof of the lower bound}
In this section we establish the $\Gamma$-liminf inequality for Theorem \ref{thm:main}, which is formulated explicitly in Proposition \ref{prop.liminf} below. First we recall the fundamental estimate proven in \cite[Lemma 4.4]{AmBrI}.
\begin{lemma}[Fundamental estimate]\label{l.fundamental_est}
Let $K\subset\subset B\subset\subset A$ be open subsets of $D$ and $g(\w)\in \A_c$. Then for some $\Lambda>0$ and all $u,v\in BV(D;\M)$ there exists $w\in BV(D;\M)$ satisfying $w=u$ on $K$, $w=v$ on $A\setminus B$, $w(x)\in\{u(x),v(x)\}$ a.e. in $A$ and 
\begin{equation*}
	E_{\e}(\w)(w,A)\leq E_{\e}(\w)(u,B)+E_{\e}(\w)(v,A\setminus \overline{K})+\Lambda c|B\setminus\overline{K}\cap\{u\neq v\}|.
\end{equation*}
\end{lemma}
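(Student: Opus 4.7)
The plan is to glue $u$ and $v$ across a judiciously chosen level set of a cut-off function, so that only a small additional amount of interface is created inside $B\setminus\overline{K}$. Concretely, I would fix $\varphi\in C^\infty_c(B;[0,1])$ with $\varphi\equiv 1$ on a neighborhood of $\overline{K}$, and set $\Lambda:=\|\nabla\varphi\|_\infty$, which depends only on the geometry of $K\subset\subset B$. For $t\in(0,1)$ consider the open super-level set $E_t:=\{\varphi>t\}$, which by construction satisfies $K\subset E_t\subset\subset B$; by Sard's theorem combined with the coarea theory of $BV$, $E_t$ is a set of finite perimeter for a.e. $t\in(0,1)$, with $\rb E_t\subset\{\varphi=t\}$.

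For such $t$ I would define
\[
w_t(x):=\begin{cases} u(x) & \text{if } x\in E_t,\\ v(x) & \text{if } x\in D\setminus E_t.\end{cases}
\]
Then $w_t\in BV(D;\M)$, $w_t=u$ on $K$, $w_t=v$ on $A\setminus B$, and $w_t(x)\in\{u(x),v(x)\}$ a.e., so the three pointwise conclusions of the lemma hold. By standard $BV$-gluing (see, e.g., \cite[Section 3.7]{AFP}), the jump set of $w_t$ inside $A$ is contained, up to an $\Hd$-negligible set, in
\[
(S_u\cap E_t)\cup(S_v\cap(A\setminus\overline{E_t}))\cup(\rb E_t\cap\{u\neq v\}\cap A).
\]
Using $g(\w)\leq c$ on the third piece together with the inclusions $K\subset E_t\subset B$, the contributions from the first two pieces are dominated by $E_\e(\w)(u,B)$ and $E_\e(\w)(v,A\setminus\overline K)$ respectively, yielding
\[
E_\e(\w)(w_t,A)\leq E_\e(\w)(u,B)+E_\e(\w)(v,A\setminus\overline K)+c\,\Hd(\rb E_t\cap\{u\neq v\}).
\]

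To select $t^*$ I would average over $t\in(0,1)$ using the coarea formula applied to the Lipschitz function $\varphi$ (which has $\nabla\varphi\equiv 0$ off $B\setminus\overline{K}$):
\[
\int_0^1 \Hd(\rb E_t\cap\{u\neq v\})\,\mathrm{d}t=\int_{\{u\neq v\}\cap(B\setminus\overline K)}|\nabla\varphi|\,\mathrm{d}x\leq\Lambda\,|(B\setminus\overline K)\cap\{u\neq v\}|.
\]
A mean-value argument then produces $t^*\in(0,1)$ for which the integrand is no larger than the average, and $w:=w_{t^*}$ satisfies all the claims.

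The main technical point I would expect is justifying the decomposition of $S_{w_t}$ for a.e. $t$: one must exclude values of $t$ where $E_t$ fails to have finite perimeter and where the inner/outer traces of $u$ and $v$ on $\rb E_t$ are not well defined, so that the newly created interface is controlled by $\rb E_t\cap\{u\neq v\}$ rather than the full $\rb E_t$. Standard Fubini/Sard arguments for $BV$-functions handle this, so the averaging step then selects a single $t^*$ for which all these regularity properties hold simultaneously with the desired quantitative bound.
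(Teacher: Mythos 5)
The paper does not prove this lemma but cites it directly from \cite[Lemma 4.4]{AmBrI}; your reconstruction via a cut-off function $\varphi$, the coarea/averaging selection of a good level set $E_{t^*}$, and the $BV$-gluing bound on the extra interface $\rb E_{t^*}\cap\{u\neq v\}$ is precisely the classical Ambrosio--Braides argument behind that reference, and it is correct (including the technical step of discarding the null set of $t$'s for which $\rb E_t$ charges $S_u\cup S_v$ or on which the Lebesgue representative of $\{u\neq v\}$ misbehaves). Your choice $\Lambda=\|\nabla\varphi\|_\infty$ is also consistent with Remark~\ref{r.constant}, which records that $\Lambda$ is the Lipschitz constant of such a cut-off.
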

\begin{remark}\label{r.constant}
As shown in the proof in \cite{AmBrI}, the constant $\Lambda$ can be taken as the Lipschitz constant of a cut-off function between the sets $K$ and $D\setminus B$ and therefore it can be bounded by $\Lambda\leq 2\,\dist(K,\partial B)$.
\end{remark}
\begin{proposition}\label{prop.liminf}
Let $\w\in\Omega'$ with $\Omega'$ the set of full probability given by Proposition \ref{prop:existence-limit}. Then for any $D\in\A$ and any sequence $(u_{\e})\subset L^1(D;\R^m)$ such that $u_{\e}\to u$ in $L^1(D;\R^m)$ we have
\begin{equation*}
	\liminf_{\e\to 0}E_{\e}(\w)(u_{\e},D)\geq \begin{cases}
		\displaystyle\int_{D\cap S_u}g_{\rm hom}(\w,u^+,u^-,\nu_u)\,\mathrm{d}\mathcal{H}^{d-1} &\text{if}\ u\in BV(D;\M),\\
		+\infty &\text{otherwise in}\ L^1(D;\R^m).
	\end{cases}
\end{equation*}

\end{proposition}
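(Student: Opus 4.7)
The plan is to combine the blow-up method of Fonseca--M\"uller with the fundamental estimate (Lemma \ref{l.fundamental_est}) and the rescaled cell formula from Remark \ref{rem:rescaling}. First I would dispose of the case $u\notin BV(D;\M)$: whenever the liminf is finite, passing to a subsequence gives $\sup_\e E_\e(\w)(u_\e,D)<\infty$, and the lower bound $g\geq 1/c$ yields a uniform bound on $\Hd(S_{u_\e}\cap D)$, hence on $|Du_\e|(D)$ (since $u_\e$ takes values in the finite set $\M\subset\R^m$). The $L^1$-lower semicontinuity of the total variation then forces $u\in BV(D;\M)$. So I may assume $u\in BV(D;\M)$ and, after extraction, that $E_\e(\w)(u_\e,D)$ converges to its (finite) liminf. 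Define the positive Radon measures
\begin{equation*}
\mu_\e\defas g(\w,\tfrac{\cdot}{\e},u_\e^+,u_\e^-,\nu_{u_\e})\,\Hd\llcorner S_{u_\e},
\end{equation*}
so that $\mu_\e(D)=E_\e(\w)(u_\e,D)$ and $\sup_\e\mu_\e(D)<\infty$. Up to a further subsequence, $\mu_\e\rightharpoonup^*\mu$ weakly-$*$ as Radon measures on $D$. Since $\liminf_\e\mu_\e(D)\geq\mu(D)$ and any non-negative singular part of $\mu$ only helps, it suffices to establish $\mu\geq g_{\rm hom}(\w,u^+,u^-,\nu_u)\,\Hd\llcorner S_u$.

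Next I would invoke Besicovitch differentiation together with the rectifiability of $S_u$: writing $f$ for the Radon--Nikodym density of $\mu$ with respect to $\Hd\llcorner S_u$, for $\Hd$-a.e.\ $x_0\in S_u$ one has
\begin{equation*}
f(x_0)=\lim_{\rho\to 0}\frac{\mu(\Qnxr)}{\rho^{d-1}},
\end{equation*}
where the cube is oriented using $\nu=\nu_u(x_0)$. Selecting radii with $\mu(\partial\Qnxr)=0$ (only countably many are excluded), weak-$*$ convergence gives $\mu(\Qnxr)\geq\liminf_{\e\to 0}\mu_\e(\Qnxr)$, and the proposition reduces to showing
\begin{equation}\label{eq:plan-key}
\liminf_{\rho\to 0}\liminf_{\e\to 0}\frac{\mu_\e(\Qnxr)}{\rho^{d-1}}\geq g_{\rm hom}(\w,u^+(x_0),u^-(x_0),\nu_u(x_0))
\end{equation}
for $\Hd$-a.e.\ $x_0\in S_u$.

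To establish \eqref{eq:plan-key}, fix such $x_0$, set $(a,b,\nu)\defas(u^+(x_0),u^-(x_0),\nu_u(x_0))$, and exploit the jump-point property $\rho^{-d}|\Qnxr\cap\{u\neq\uxzn\}|\to 0$ as $\rho\to 0$. For $\eta\in(0,\tfrac12)$ I would apply Lemma \ref{l.fundamental_est} with $K=Q_{(1-2\eta)\rho}^\nu(x_0)\wcont B=Q_{(1-\eta)\rho}^\nu(x_0)\wcont A=\Qnxr$, $u=u_\e$ and $v=\uxzn$, producing $w_\e\in BV(\Qnxr;\M)$ with $w_\e=\uxzn$ on $A\sm B$ (so $w_\e\in\Adm(\uxzn,\Qnxr)$) and
\begin{equation*}
E_\e(\w)(w_\e,\Qnxr)\leq E_\e(\w)(u_\e,\Qnxr)+c\,\Hd(\Hnx\cap(A\sm\overline K))+\Lambda c\,|\Qnxr\cap\{u_\e\neq\uxzn\}|,
\end{equation*}
with $\Lambda$ controlled by the reciprocal of $\dist(K,\partial B)\sim\eta\rho$ (Remark \ref{r.constant}). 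The surface term is bounded by $Cc\eta\rho^{d-1}$, and $|\Qnxr\cap\{u_\e\neq\uxzn\}|\to|\Qnxr\cap\{u\neq\uxzn\}|$ as $\e\to 0$ because $\M$ is finite and $u_\e\to u$ in $L^1$. By the rescaled cell formula (Remark \ref{rem:rescaling}), $\rho^{1-d}\m_\e(\w)(\uxzn,\Qnxr)\to g_{\rm hom}(\w,a,b,\nu)$ as $\e\to 0$. Dividing the above bound by $\rho^{d-1}$ and sending successively $\e\to 0$, then $\rho\to 0$, and finally $\eta\to 0$ yields \eqref{eq:plan-key}; integrating $f\geq g_{\rm hom}(\w,u^+,u^-,\nu_u)$ over $D\cap S_u$ completes the proof.

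The main technical obstacle is the interplay of the three limits: the gluing cost carries a factor $\Lambda\sim(\eta\rho)^{-1}$, so the last summand divided by $\rho^{d-1}$ behaves like $\eta^{-1}\,\rho^{-d}|\Qnxr\cap\{u\neq\uxzn\}|$. The only thing that kills this term is the jump-point estimate $\rho^{-d}|\Qnxr\cap\{u\neq\uxzn\}|\to 0$, which forces the order of the limits: $\e\to 0$ first to produce $g_{\rm hom}$ on the left, $\rho\to 0$ second to exploit the blow-up, and only then $\eta\to 0$ to absorb the surface contribution.
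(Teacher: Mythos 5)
Your proposal is correct and follows essentially the same route as the paper: the same blow-up/Besicovitch reduction, the same application of the fundamental estimate to glue $u_\e$ to $u_{x_0}^{a,b,\nu}$ near $\partial Q_\rho^\nu(x_0)$, the same invocation of the rescaled cell formula, and the same order of limits ($\e\to 0$, then $\rho\to 0$, then the gluing parameter $\to 0$) for exactly the reason you identify. The only cosmetic difference is that the paper bounds the gluing term by $\frac{C}{\delta\rho}\int_{Q_\rho^\nu(x_0)}|u_\e-u_{x_0}^{a,b,\nu}|\dx$ and passes to $u$ via $L^1$-convergence, whereas you track $|Q_\rho^\nu(x_0)\cap\{u_\e\neq u_{x_0}^{a,b,\nu}\}|$ directly; both are killed by the approximate jump-point property.
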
 
\begin{remark}\label{r.justopen}
	It will be clear from the proof that Proposition \ref{prop.liminf} also holds when $D$ is an arbitrary open set. We need the Lipschitz regularity in the proof of the upper bound.
\end{remark}
\begin{proof}
It suffices to consider the case when $\liminf_{\e}E_{\e}(\w)(u_{\e},D)<+\infty$. Passing to a subsequence, we can further assume that the $\liminf$ is actually a limit and hence
\begin{equation}\label{eq:energybounded}
	\lim_{\e\to 0}E_{\e}(\w)(u_{\e},D)=C<+\infty.
\end{equation}
In particular, $(u_\e)\subset BV(D;\M)$ and since $g(\omega)\in\A_c$ we deduce that $\sup_\e\|u_\e\|_{BV(D)}<+\infty$, hence $u\in BV(D;\M)$ by the compactness theorem~\cite[Theorem 3.23]{AFP}. We will use a classical blow-up argument. To this end, consider the ($\w$-dependent) Radon-measures defined by their action on a Borel set $B\subset D$ via
\begin{equation*}
	\lambda_{\e}(B)=\int_{B\cap S_{u_{\e}}}g(\w,\tfrac{x}{\e},u_{\e}^+,u_{\e}^-,\nu_{u_{\e}})\,\mathrm{d}\mathcal{H}^{d-1}.
\end{equation*}
Since $g$ is non-negative, the bound \eqref{eq:energybounded} implies that $|\lambda_{\e}|(D)$ is equi-bounded, so that up to a subsequence (not relabeled) we can assume that $\lambda_{\e}\overset{*}{\rightharpoonup}\lambda$ for some non-negative finite Radon measure $\lambda$ on $D$. Since $D$ is open, the weak$^*$-convergence together with~\cite[Proposition 1.62]{AFP} implies that
\begin{equation*}
	\lim_{\e\to 0}E_{\e}(\w)(u_{\e},D)=\lim_{\e\to 0}\lambda_{\e}(D)\geq\lambda(D)\geq\int_{D\cap S_u}\frac{{\rm d}\lambda_{\rm ac}}{\mathrm{d}\mathcal{H}^{d-1}\LL S_u}(x)\,\mathrm{d}\mathcal{H}^{d-1},
\end{equation*}
where in the last inequality we used the Besicovitch differentiation theorem (see \cite[Theorem 1.153 and Remark 1.154 (ii)]{FoLe}) to decompose $\lambda=\lambda_{\rm ac}+\lambda_{\rm s}$, where $\lambda_{\rm ac},\lambda_{\rm s}$ are non-negative finite Radon measure such that $\lambda_{\rm ac}$ is absolutely continuous with respect to the finite Radon measure $\mathcal{H}^{d-1}\LL S_u$ and  $\lambda_s\perp\mathcal{H}^{d-1}\LL S_u$. Thus it suffices to show that 
\begin{equation*}
	\frac{{\rm d}\lambda_{\rm ac}}{\mathrm{d}\mathcal{H}^{d-1}\LL S_u}(x)\geq g_{\rm hom}(\w,u^+(x),u^-(x),\nu_u(x))\qquad\text{ for }\mathcal{H}^{d-1}\LL S_u\text{-a.e. }x\in D.
\end{equation*}
Still due to the Besicovitch differentiation theorem we know that for $\mathcal{H}^{d-1}\LL S_u$-a.e. $x\in D$ we have
\begin{equation}\label{eq:besicovitch}
	\frac{{\rm d}\lambda_{\rm ac}}{\mathrm{d}\mathcal{H}^{d-1}\LL S_u}(x)=\lim_{\rho\to 0}\frac{\lambda(x+\rho C)}{\mathcal{H}^{d-1}((x+\rho C)\cap S_u)}
\end{equation}
for every bounded, convex, open set containing the origin. We now fix $x_0\in S_u$ satisfying~\eqref{eq:besicovitch} and let $\nu=\nu_u(x_0)$, $a=u^+(x_0)$ and $b=u^-(x_0)$. By well-known properties of $BV$-functions (see \cite[Theorems 2.83 and 3.78]{AFP}), we can further assume that $x_0$ satisfies
\begin{equation}\label{eq:approx-tangent}
	\lim_{\rho\to 0}\frac{\mathcal{H}^{d-1}(Q^{\nu}_{\rho}(x_0)\cap S_u)}{\rho^{d-1}}=1.
\end{equation} 
Moreover, since $\lambda$ is a finite Radon measure, we have $\lambda(\partial Q_\rho^\nu(x_0))=0$ except for countably many $\rho$.
Hence, taking $C=Q^{\nu}$, combining~\eqref{eq:besicovitch}-\eqref{eq:approx-tangent} with~\cite[Proposition 1.62]{AFP} yields that
\begin{equation*}
	\frac{{\rm d}\lambda_{\rm ac}}{\mathrm{d}\mathcal{H}^{d-1}\LL S_u}(x_0)=\lim_{\rho\to 0}\frac{\lambda(Q_{\rho}^{\nu}(x_0))}{\rho^{d-1}}=\lim_{\rho\to 0}\frac{\lambda(\overline{Q}_{\rho}^{\nu}(x_0))}{\rho^{d-1}}\geq \liminf_{\rho\to 0}\liminf_{\e\to 0}\frac{\lambda_{\e}(Q_{\rho}^{\nu}(x_0))}{\rho^{d-1}},
\end{equation*}
Consequently, it suffices to show that
\begin{equation}\label{eq:localized_liminf}
\liminf_{\rho\to 0}\liminf_{\e\to 0}\frac{1}{\rho^{d-1}}\int_{Q_{\rho}^{\nu}(x_0)\cap S_{u_{\e}}}g(\w,\tfrac{x}{\e},u_{\e}^+,u_{\e}^-,\nu_{u_{\e}})\,\mathrm{d}\mathcal{H}^{d-1}	\geq g_{\rm hom}(\w,a,b,\nu).
\end{equation}
To this end, we will modify $u_{\e}$ to be the function $u_{x_0}^{a,b,\nu}$ near $\partial Q_{\rho}^{\nu}(x_0)$, which then allows us to compare the energy of this new function with the stochastic process defining $g_{\rm hom}$. 
In order to control the energetic error, we let $\delta\in (0,1/2)$ and apply the fundamental estimate Lemma~\ref{l.fundamental_est} with the sets $K=Q^{\nu}_{(1-2\delta)\rho}(x_0)$, $B=Q^{\nu}_{(1-\delta)\rho}(x_0)$ and $A=Q_{\rho}^{\nu}(x_0)$, and the functions $u=u_{\e}$ and $v=u_{x_0}^{a,b,\nu}$. With the bound in Remark \ref{r.constant} we infer that there exists a function $\tilde{u}_{\e}\in BV(D;\M)$ with $\tilde{u}_\e=u_{x_0}^{a,b,\nu}$ in a neighborhood of $\partial Q_{\rho}^{\nu}(x_0)$ and such that
\begin{align*}
E_{\e}(\w)(\tilde{u}_{\e},A)&\leq E_{\e}(\w)(u_{\e},B)	+ E_{\e}(\w)(u_{x_0}^{a,b,\nu},A\setminus \overline{K})+\frac{C}{\delta\rho}|A\setminus \overline{K}\cap\{u_{\e}\neq u_{x_0}^{a,b,\nu}\}|
\\
&\leq E_{\e}(\w)(u_{\e},Q_{\rho}^{\nu}(x_0))+C \mathcal{H}^{d-1}(Q_{\rho}^{\nu}\setminus Q_{(1-2\delta)\rho}^{\nu}\cap H^{\nu})+\frac{C}{\delta\rho}\int_{Q_{\rho}^{\nu}(x_0)}|u_{\e}-u_{x_0}^{a,b,\nu}|\dx,
\end{align*}
where we used that $g(\w)\in \A_c$ and that $\M$ is finite. Since $\tilde{u}_\e \in\Adm(u_{x_0}^{a,b,\nu},Q_\rho^\nu(x_0))$, the left-hand side term can be bounded from below by $\m_\e(\omega)(u_{x_0}^{a,b,\nu},Q_\rho^\nu(x_0))$. Dividing the whole inequality by $\rho^{d-1}$, from Proposition~\ref{prop:existence-limit} and Remark~\ref{rem:rescaling} we deduce that
\begin{align*}
	g_{\rm hom}(\w,a,b,\nu)\leq \liminf_{\e\to 0}\frac{1}{\rho^{d-1}}E_{\e}(\w)(u,Q_{\rho}^{\nu}(x_0))+C (1-(1-2\delta)^{d-1})+\frac{C}{\delta\rho^{d}}\int_{Q_{\rho}^{\nu}(x_0)}|u-u_{x_0}^{a,b,\nu}|\dx.
\end{align*}
By our choice of $a=u^+(x_0)$, $b=u^-(x_0)$ and $\nu=\nu_u(x_0)$, the last averaged integral vanishes when $\rho\to 0$ by the definition of approximate jump point (see~\eqref{def:jump-point}). The claimed estimate \eqref{eq:localized_liminf} then follows by the arbitrariness of $\delta\in (0,1/2)$.
\end{proof}

\subsection{Proof of the upper bound}
In this section we prove the limsup-inequality for Theorem~\ref{thm:main}. It is stated in Proposition~\ref{prop:limsup} below. We shall need the following elementary geometric lemma.
\begin{lemma}\label{l.closeto_d-2}
Let $H^{\nu_1}(x_1)$ and $H^{\nu_2}(x_2)$ be two hyperplanes that are not parallel, i.e., $\nu_1$ and $\nu_2$ are linearly independent and let $x\in\R^d$ be a point such that $\dist(x,H^{\nu_1}(x_1))\leq \rho$ and $\dist(x,H^{\nu_2}(x_2))\leq\rho$. Then
\begin{equation*}
	\dist(x,H^{\nu_1}(x_1)\cap H^{\nu_2}(x_2))\leq c(\nu_1,\nu_2)\rho.
\end{equation*}
with $c(\nu_1,\nu_2)\defas \sqrt{1+\frac{4}{1-(\nu_1\cdot\nu_2)^2}}$.
\end{lemma}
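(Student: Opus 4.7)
The plan is to decompose the distance by first orthogonally projecting $x$ onto $H^{\nu_1}(x_1)$ and then moving within that hyperplane to reach the intersection, using Pythagoras to combine the two perpendicular errors.

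First, I would let $y$ denote the orthogonal projection of $x$ onto $H^{\nu_1}(x_1)$, so that $x-y$ is parallel to $\nu_1$ with $|x-y| = \dist(x, H^{\nu_1}(x_1)) \le \rho$. The triangle inequality immediately gives $\dist(y, H^{\nu_2}(x_2)) \le |x-y| + \dist(x, H^{\nu_2}(x_2)) \le 2\rho$.

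Next, I would identify the unit normal direction within $H^{\nu_1}(x_1)$ to the affine $(d-2)$-plane $H^{\nu_1}(x_1) \cap H^{\nu_2}(x_2)$. Since $\nu_1,\nu_2$ are linearly independent, the vector $\tilde{\nu}_2 := (\nu_2 - (\nu_1\cdot\nu_2)\nu_1)/\sqrt{1-(\nu_1\cdot\nu_2)^2}$ is well-defined, lies in $H^{\nu_1}(0)$, and has unit length. Letting $z$ be the orthogonal projection of $y$ inside $H^{\nu_1}(x_1)$ onto this intersection, we have $y - z = t\tilde{\nu}_2$ for some $t \in \R$. Pairing with $\nu_2$ and using $(z-x_2)\cdot\nu_2 = 0$ gives $(y-x_2)\cdot\nu_2 = t\sqrt{1-(\nu_1\cdot\nu_2)^2}$, whence
\[
|y-z| = |t| = \frac{\dist(y, H^{\nu_2}(x_2))}{\sqrt{1-(\nu_1\cdot\nu_2)^2}} \le \frac{2\rho}{\sqrt{1-(\nu_1\cdot\nu_2)^2}}.
\]

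Finally, since $x-y$ is parallel to $\nu_1$ while $y-z \in H^{\nu_1}(0)$, the two vectors are orthogonal, so Pythagoras yields
\[
|x-z|^2 = |x-y|^2 + |y-z|^2 \le \rho^2\left(1 + \frac{4}{1-(\nu_1\cdot\nu_2)^2}\right) = c(\nu_1,\nu_2)^2\,\rho^2,
\]
and since $z \in H^{\nu_1}(x_1)\cap H^{\nu_2}(x_2)$ this gives the claim. The argument is entirely elementary; the only mild subtlety is to notice that by routing through $y$ one obtains two mutually orthogonal error vectors, which is precisely what allows Pythagoras (and therefore the stated constant) rather than a cruder additive estimate via the triangle inequality.
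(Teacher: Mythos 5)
Your proof is correct and follows essentially the same strategy as the paper: both hinge on the Gram--Schmidt vector $\tilde\nu_2$, which together with $\nu_1$ forms an orthonormal basis for the orthogonal complement of $H^{\nu_1}(x_1)\cap H^{\nu_2}(x_2)$. The paper translates the intersection to the origin and writes $\dist(x,\cdot)^2\le (x\cdot\nu_1)^2+(x\cdot\tilde\nu_2)^2$, bounding the second term by $4\rho^2/(1-(\nu_1\cdot\nu_2)^2)$, whereas you route through the intermediate projection $y$ and invoke Pythagoras; the two computations are identical in substance and yield the same constant.
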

\begin{proof}
It is well-known that for two non-parallel hyperplanes the set $H^{\nu}(x_1)\cap H^{\nu_2}(x_2)$ is a $d-2$-dimensional affine space. Up to a translation, we can assume that the intersection contains the origin and hence we can assume that $x_1=x_2=0$. Then, by orthogonal projection we have that
\begin{equation}\label{eq:closeness}
\max\{|x\cdot\nu_1|,|x\cdot\nu_2|\}\leq\rho.
\end{equation}
Consider the vector $\widetilde{\nu}_2$ constructed by a step of the Gram-Schmidt algorithm, i.e., 
\begin{equation*}
	\widetilde{\nu}_2=\frac{\nu_2-(\nu_2\cdot\nu_1) \nu_1}{|\nu_2-( \nu_2\cdot\nu_1) \nu_1|}=\frac{\nu_2-(\nu_2\cdot\nu_1) \nu_1}{\sqrt{1-(\nu_1\cdot\nu_2)^2}}.
\end{equation*}
Since $\nu_1,\widetilde{\nu}_2$ is an orthonormal basis for the orthogonal complement $(H^{\nu_1}\cap H^{\nu_2})^{\perp}$, we deduce from \eqref{eq:closeness} that
\begin{equation*}
\dist(x,H^{\nu_1}\cap H^{\nu_2})^2\leq ( x\cdot\nu_1)^2+( x\cdot\widetilde{\nu}_2)^2\leq \rho^2+\frac{4\rho^2}{1-(\nu_1\cdot\nu_2)^2}.
\end{equation*}
Taking square-roots yields the claim.
\end{proof}
\begin{proposition}\label{prop:limsup}
Let $\Omega'\subset\Omega$ be the set of full probability as in Proposition~\ref{prop:existence-limit}. For every $\omega\in\Omega'$ and every $u\in BV(D;\M)$ there exists a sequence $(u_\e)\subset BV(D;\M)$ (depending on $\omega$) such that $u_\e\to u$ in $L^1 (D;\R^m)$ and satisfying
	\begin{equation}\label{est:limsup}
	\limsup_{\e\to 0} E_\e(\omega)(u,D)\leq \int_{D\cap S_u}g_{\rm hom}(\omega, u^+, u^- ,\nu_u)\dHd.
	\end{equation}
\end{proposition}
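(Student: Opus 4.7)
The plan is to follow the classical blow-up-and-paste strategy. The first step is a \emph{reduction to polyhedral partitions}: by the density-in-energy result \cite[Theorem 3.1]{BCG}, one can approximate an arbitrary $u\in BV(D;\M)$ in $L^1$ and in jump-energy by partitions whose jump set, up to an $\Hd$-null set, is a finite disjoint union of open $(d-1)$-dimensional polyhedral facets $\Sigma_1,\ldots,\Sigma_N$, each lying in a hyperplane $H^{\nu_k}(x_k)$ and across which $u$ jumps from a fixed value $a_k\in\M$ to $b_k\in\M$. The continuity of $g_{\rm hom}$ proved in Proposition~\ref{prop:existence-limit}, together with a standard diagonal argument, then reduces the construction of the recovery sequence to the polyhedral case.

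For such a polyhedral $u$ I would proceed by a \emph{covering and local replacement}. Fix parameters $\eta\gg\rho>0$ and set $\Sigma_k^\eta:=\{x\in\Sigma_k:\dist(x,\partial\Sigma_k)>\eta\}$. For each $k$ cover $\Sigma_k^\eta$ by a finite family of pairwise disjoint cubes $Q_\rho^{\nu_k}(x_j^k)\subset D$ centered at points $x_j^k\in\Sigma_k$. On each such cube the function $u$ coincides with the pure jump $u_{x_j^k}^{a_k,b_k,\nu_k}$, so the rescaled cell formula of Remark~\ref{rem:rescaling} supplies near-optimal competitors $u_\e^{j,k}\in\Adm(u_{x_j^k}^{a_k,b_k,\nu_k},Q_\rho^{\nu_k}(x_j^k))$ satisfying
\begin{equation*}
\limsup_{\e\to 0}\frac{E_\e(\omega)(u_\e^{j,k},Q_\rho^{\nu_k}(x_j^k))}{\rho^{d-1}}\leq g_{\rm hom}(\omega,a_k,b_k,\nu_k)+\eta.
\end{equation*}
Pasting $u_\e:=u_\e^{j,k}$ inside each cube and $u_\e:=u$ elsewhere yields a well-defined function in $BV(D;\M)$: indeed each $u_\e^{j,k}$ agrees with $u_{x_j^k}^{a_k,b_k,\nu_k}=u|_{Q_\rho^{\nu_k}(x_j^k)}$ in a neighborhood of $\partial Q_\rho^{\nu_k}(x_j^k)$, so no spurious interior jumps are created along the cube boundaries.

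The main obstacle is the codimension-two skeleton where distinct facets $\Sigma_k$ and $\Sigma_{k'}$ meet, since cubes oriented with different normals $\nu_k\neq\nu_{k'}$ cannot be fitted together. This is precisely where Lemma~\ref{l.closeto_d-2} is used: any cube $Q_\rho^{\nu_k}(x_j^k)$ whose interior intersects a second facet $\Sigma_{k'}$ must be centered within $c(\nu_k,\nu_{k'})\rho$ of the edge $H^{\nu_k}(x_k)\cap H^{\nu_{k'}}(x_{k'})$, so discarding all such cubes costs at most $\Hd$-mass of order $\rho$ on the $\eta$-neighborhood of $\partial\Sigma_k$ in $S_u$. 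On the leftover uncovered part of $S_u$ we keep $u_\e=u$ and pay at most $c\,\Hd\bigl(S_u\setminus\bigcup_k\Sigma_k^\eta\bigr)$, which vanishes as $\eta\to 0$. Summing the cube contributions gives a Riemann-type sum converging, as $\rho\to 0$, to $\sum_k g_{\rm hom}(\omega,a_k,b_k,\nu_k)\Hd(\Sigma_k^\eta)$. The convergence $u_\e\to u$ in $L^1(D;\R^m)$ is immediate because the modifications live in a $\rho$-tube around $S_u$, whose Lebesgue measure tends to $0$. A final diagonal extraction in the three parameters $(\e,\rho,\eta)$ then produces a single recovery sequence achieving~\eqref{est:limsup}.
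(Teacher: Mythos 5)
Your proposal follows the same overall strategy as the paper: reduce to polyhedral partitions by density and continuity of $g_{\rm hom}$, cover the flat pieces of the jump set by small cubes, insert near-optimal competitors from the rescaled cell formula of Remark~\ref{rem:rescaling}, paste, and dispose of the codimension-two skeleton via Lemma~\ref{l.closeto_d-2}. The organization is, however, different in a meaningful way: you work facet-by-facet, trimming each $\Sigma_k$ to $\Sigma_k^\eta$ so that each cube sees a constant trace pair, whereas the paper covers whole hyperplanes $H^{\nu_k}(x_k)$ with $\rho$-lattice cubes, imposes a priority ordering on the hyperplanes to make the collection disjoint, and then classifies cubes by the observed trace pair $(i,j)$ into $\ZZ_\rho^k(i,j)$ and a ``corner'' class $\ZZ_\rho^{k,\rm co}$, estimating the latter by the relative isoperimetric inequality plus Lemma~\ref{l.closeto_d-2}. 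Your two-parameter $(\eta,\rho)$ trimming is conceptually cleaner and avoids the trace-classification step; the paper's one-parameter scheme avoids the extra limit in $\eta$ but is more technical. Two small points you should tighten. First, Lemma~\ref{l.closeto_d-2} only applies when $\nu_k$ and $\nu_{k'}$ are linearly independent; for \emph{parallel} facets lying on distinct hyperplanes you must instead require $\rho$ smaller than the minimal distance between such hyperplanes (the paper imposes this explicitly), after which no cube centered on one can meet the other. Second, you keep all cubes inside $D$, which leaves a $\rho$-strip of $S_u$ near $\partial D$ untreated and requires $\Hd(S_u\cap\partial D)=0$ to conclude; the paper circumvents this by first extending $u$ to a larger Lipschitz set $D'\supset\supset D$ with $|Du|(\partial D)=0$ via \cite[Lemma 2.7]{BCG} and covering $S_u\cap D'$, then shrinking $D'\searrow D$ at the end. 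With these adjustments your argument goes through and is a valid alternative to the paper's.
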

\begin{proof}
Let $u\in BV(D;\M)$ and let $D'\in\mathscr{A}$ be such that $D\wcont D'$. 
Since $D$ has Lipschitz boundary, by \cite[Lemma 2.7]{BCG} we can extend $u$ to $D'$ in such a way that $u\in BV(D';\M)$ and $|Du|(\partial  D)=0$. 
Given $\omega\in\Omega'$, below we construct a sequence $(u_\e)\subset L^1(D')$ such that $u_\e\to u$ in $L^1(D')$ and 
	\begin{equation}\label{est:limsup-extension}
	\limsup_{\e\to 0} E_\e(\omega)(u,D)\leq \int_{D'\cap S_u}g_{\rm hom}(\omega, u^+, u^- ,\nu_u)\dHd.
	\end{equation}
If~\eqref{est:limsup-extension} holds true, using that $\Hd(S_u\cap\partial D)\leq C|Du|(\partial D)=0$, 
we deduce~\eqref{est:limsup} from~\eqref{est:limsup-extension} by letting $D'\searrow D$.
Eventually, since $g_{\rm hom}(\omega,\cdot,\cdot,\cdot)$ is continuous, it suffices to prove~\eqref{est:limsup-extension} for $u\in BV(D';\M)$ with polyhedral jumpset $S_u$. The general case follows thanks to the density result~\cite[Theorem 2.1 and Corollary 2.4]{BCG} and a standard lower semicontinuity argument (see \cite[Remark 1.29]{GCB}). Following the definition of polyhedral jumpset given in~\cite{BCG}, below we assume that there exist finitely many $(d-1)$-dimensional simplices $T_1,\ldots, T_N$ such that up to a $\mathcal{H}^{d-1}$-negligible set $D'\cap S_u=D'\cap\bigcup_{k=1}^N T_k$. 

\smallskip
\textbf{Step 1.} To illustrate ideas, we first treat the case $N=1$. In this case we have $D'\cap S_u=D'\cap H^\nu(x_0)$ with $x_0\in T_1$ and $\nu=\nu_u$. We can thus assume that $u=u_{x_0}^{a,b,\nu}$ on $D'$ with $a,b\in\M$, $a\neq b$. 
Let now $\rho>0$ with $\sqrt{d}\rho<\dist(D;\partial D')$ be arbitrary and let
	\begin{equation}\label{def:Irho}
	\ZZ_\rho\defas\Big\{z\in\Z^{d-1}\colon Q_\rho^\nu\big(x_0+ O_\nu(\rho z,0)\big)\cap D\neq\emptyset\Big\}.
	\end{equation}
To shorten notation set $x_\rho^z\defas x_0+O_\nu(\rho z,0)$ for every $z\in\ZZ_\rho$.
Since $(x_{\rho}^z-x_0)\cdot\nu=0$, we clearly have $u_{x_0}^{a,b,\nu}=u_{x_{\rho}^{z}}^{a,b,\nu}$. In view of~\eqref{eq:limit-rescaled} there exists $\e_\rho>0$ sufficiently small such that for every $\e\in(0,\e_\rho)$ and every $z\in\ZZ_\rho$ we find a function $u_{\e,\rho}^{z}\in\Adm(u_{x_0}^{a,b,\nu},Q_\rho^\nu(x_{\rho}^z))$ with
	\begin{equation}\label{est:limsup-almost-optimal}
	\frac{1}{\rho^{d-1}}E_\e(\omega)\big(u_{\e,\rho}^{z},Q_\rho^\nu(x_{\rho}^z)\big)\leq g_{\rm hom}(\w,a,b,\nu)+\rho.
	\end{equation}
Note that $\bigcup_{z\in\ZZ_\rho}Q_\rho^\nu(x_\rho^z)\subset\{\dist(\cdot,D)<\sqrt{d}\rho\}\subset\subset D'$. We define $u_{\e,\rho}\in BV(D';\M)$ setting $u_{\e,\rho}(x)=u_{\e,\rho}^z(x)$ if $x\in Q_\rho^\nu(x_\rho^z)$ for some $z\in\ZZ_\rho$ and by extending $u_{\e,\rho}$ via $u=u_{x_0}^{a,b,\nu}$ on $D'\sm\bigcup_{z\in\ZZ_\rho}Q_\rho^\nu(x_\rho^z)$. By construction of the cubes $Q_{\rho}^\nu(x_{\rho}^z)$ we deduce that
	\begin{equation}\label{est:limsup-L1norm}
	\|u_{\e,\rho}-u\|_{L^1(D';\R^m)}\leq C\big|\big\{x\in D'\colon\dist(x,H^\nu(x_0))< \sqrt{d}\rho\big\}\big|
	\leq C {\rm diam}(D')^{d-1}\rho.
	\end{equation}
for every $\e\in(0,\e_\rho)$.

We next estimate the energy of $u_{\e,\rho}$. Since $\Hd\big(S_{u_{\e,\rho}}\cap\partial Q_\rho^\nu(x_\rho^z)\big)=0$ for every $z\in\ZZ_\rho$ due to the boundary conditions satisfied by every $u_{\e,\rho}^z$, in view of~\eqref{est:limsup-almost-optimal} we get
	\begin{equation}\label{est:limsup-energy1}
	E_\e(\omega)(u_{\e,\rho},D) \leq\sum_{z\in\ZZ_\rho} E_\e(\omega)(u_{\e,\rho},Q_\rho^\nu(x_\rho^z))\\
	\leq \#\ZZ_\rho\big(\rho^{d-1}g_{\rm hom}(\omega,a,b,\nu)+\rho^{d}\big)
	\end{equation}
for every $\e<\e_{\rho}$. The cardinality of $\ZZ_\rho$ can be estimated via
	\begin{equation}\label{est:cardinality-Ir}
	\begin{split}
	\rho^{d-1}\#\ZZ_\rho &=\sum_{z\in\I_\rho}\Hd\big(Q_\rho^\nu(x_\rho^z)\cap H^\nu(x_0)\big)\leq \mathcal{H}^{d-1}(D'\cap H^{\nu}(x_0))=\mathcal{H}^{d-1}(D'\cap S_u).
	\end{split}
	\end{equation}
Combining \eqref{est:limsup-energy1} and~\eqref{est:cardinality-Ir}, we infer that
	\begin{equation}\label{est:limsup-energy2}
	\limsup_{\rho\to 0}\limsup_{\e\to 0}E_\e(\omega)(u_{\e,\rho},D)\leq \int_{D'\cap S_u}g_{\rm hom}(\omega,a,b,\nu)\dH^{d-1}.
	\end{equation}
Thanks to~\eqref{est:limsup-L1norm} and~\eqref{est:limsup-energy2} a diagonal argument provides us with a sequence $(u_\e)=(u_{\e,\rho(\e)})$ with $\|u_\e-u\|_{L^1(D';\R^m)}\to 0$ and such that \eqref{est:limsup-extension} holds.

\smallskip
\textbf{Step 2.} We now treat the case $N\geq2$. 
Upon relabeling the simplices $T_1,\ldots, T_N$ we can choose $x_k\in T_k$ and $\nu_k\in\Sd$ for $k\in\{1,\ldots,M\}$, $M\leq N$, such that each simplex is contained in a hyperplane $H^{\nu_k}(x_k)$ and we have $H^{\nu_k}(x_k)\neq H^{\nu_m}(x_m)$ for $k\neq m$. By the assumption on $u$ this implies that
	\begin{equation}\label{covering:jumpset-hyperplane}
	\Hd\bigg(D'\cap S_u\sm\bigcup_{k=1}^MH^{\nu_k}(x_k)\bigg)=0.
	\end{equation}	 
Let now $\rho>0$ be sufficiently small such that $\sqrt{d}\rho<\dist(D,\partial D')$ and such that $\rho$ is smaller than the minimal distance between two parallel hyperplanes $H^{\nu_k}(x_k)$, $H^{\nu_m}(x_m)$ with $\nu_k=\nu_m$. For such $\rho$, up to varying $x_k\in T_k$ we can further assume that
\begin{equation}\label{eq:nojumponcubeboundary}
	\mathcal{H}^{d-1}(\partial Q_{\rho}^{\nu_k}(\underbrace{x_k+O_{\nu_k}(\rho z,0)}_{=:x_{\rho}^{k,z}})\cap D'\cap S_u)=0\qquad\text{ for all }z\in\Z^{d-1}.
\end{equation}

\smallskip
In view of~\eqref{covering:jumpset-hyperplane} the family of cubes $\{Q_\rho^{\nu_k}(x_\rho^{k,z})\}_{k,z}$ covers $S_u\cap D'$ up to an $\Hd$-negligible set, but in contrast to Step 1 the cover is not disjoint, since cubes with different orientation might overlap. To avoid this, for any $k=1,\ldots,M$ we set
	\begin{equation}\label{def:Zrho-1}
	\ZZ_\rho^1 \defas\big\{z\in\Z^{d-1}\colon Q_\rho^\nu\big(x_{\rho}^{1,z})\cap D\neq\emptyset\big\}
	\end{equation}
and for $k\in\{2,\ldots,M\}$ we iteratively define $\ZZ_\rho^k$ via 
	\begin{equation}\label{def:Zrho-k}
	\ZZ_\rho^k \defas\bigg\{z\in\Z^{d-1}\colon Q_\rho^\nu\big(x_{\rho}^{k,z})\cap D\neq\emptyset\; \text{ and }\; Q_\rho^{\nu_k}(x_{\rho}^{k,z})\cap \bigcup_{m=1}^{k-1}\bigcup_{z'\in \ZZ_\rho^m}Q_\rho^{\nu_m}(x_\rho^{m,z'})=\emptyset\bigg\}.
	\end{equation} 
We further subdivide the classes $\ZZ_\rho^k$ according to the traces of $u$ on $Q_\rho^{\nu_k}(x_\rho^{k,z})\cap H^{\nu_k}(x_k)$. 
To this end, it is convenient to write $\M=\{a_1,\ldots, a_K\}$ with $K=\#\M$ and to set $A_i\defas\{u(x)=a_i\}$ for $i\in\{1,\ldots,K\}$. For every $k\in\{1,\ldots,M\}$ and every pair $(i,j)\in\{1,\ldots,K\}^2$ with $i\neq j$ we let
	\begin{equation}\label{def:Zrho-ijk}
	\ZZ_{\rho}^k(i,j)\defas\big\{z\in\ZZ_\rho^k\colon \big|Q_\rho^{\nu_k,+}(x_\rho^{k,z})\sm A_i\big|=\big|Q_\rho^{\nu_k,-}(x_\rho^{k,z})\sm A_j\big|=0\big\},
	\end{equation}
where $Q_\rho^{\nu_k,\pm}(x_\rho^{k,z})$ is defined as in~\eqref{def:Qpm}. In this way, every $z\in\ZZ_\rho^k$ belongs to at most one family $\ZZ_\rho^k(i,j)$, $(i,j)\in\{1,\ldots,K\}^2$ $(i\neq j)$ and by the definition of approximate jump points in~\eqref{def:jump-point} we have that for $\Hd$-a.e.\ $x\in Q_\rho^{\nu_k}(x_\rho^{k,z})\cap H^{\nu_k}(x_k)$ the triplet $(u^+(x),u^-(x),\nu_u(x))$ coincides with $(a_i,a_j,\nu_k)$. In particular, we have
	\begin{equation}\label{eq:jumpset-Qrkz}
	Q_\rho^{\nu_k}(x_\rho^{k,z})\cap H^{\nu_k}(x_k)=Q_\rho^{\nu_k}(x_\rho^{k,z})\cap\rb A_i\cap \rb A_j
	\end{equation}
up to an $\Hd$-negligible set.
Let us also write 
	\begin{equation*}
	\ZZ_\rho^{k,{\rm co}}\defas\ZZ_\rho^k\sm\bigcup_{\begin{smallmatrix}(i,j)\in\{1,\ldots,K\}^2\\i\neq j\end{smallmatrix}}\ZZ_\rho^k(i,j)
	\end{equation*}
for the complement of all $\ZZ_\rho^k(i,j)$ in $\ZZ_\rho^k$.
For every $k\in\{1,\ldots,M\}$ and $z\in\ZZ_\rho^k$ we define a function $u_{\e,\rho}^{k,z}\in BV(Q_\rho^{\nu_k}(x_\rho^{k,z});\M)$ by distinguishing the following two exhaustive cases. If $z\in\ZZ_\rho^{k,{\rm co}}$, we set $u_{\e,\rho}^{k,z}\defas u$ on $Q_\rho^{\nu_k}(x_\rho^{k,z})$. If $z\not\in\ZZ_\rho^{k,{\rm co}}$, then $z\in\ZZ_\rho^k(i,j)$ for a unique pair $(i,j)\in\{1,\ldots,K\}^2$ with $i\neq j$ and for $\e$ sufficiently small (depending on $\rho$) we choose $u_{\e,\rho}^{k,z}\in\Adm(u_{x_k}^{a_i,a_j,\nu_k},Q_\rho^{\nu_k}(x_\rho^{k,z}))$ with
	\begin{equation}\label{est:limsup-minimiser}
	\frac{1}{\rho^{d-1}}E_\e(\omega)\big(u_{\e,\rho}^{k,z},Q_\rho^{\nu_k}(x_\rho^{k,z})\big)\leq g_{\rm hom}(\omega,a_i,a_j,\nu_k)+\rho. 
	\end{equation}
We eventually define $u_{\e,\rho}\in BV(D';\M)$ by setting
	\begin{equation}\label{def:recovery}
	u_{\e,\rho}(x)\defas
	\begin{cases}
	u_{\e,\rho}^{k,z}(x) &\text{if $x\in Q_\rho^{\nu_k}(x_\rho^{k,z})$ for some $k\in\{1,\ldots,M\}$ and $z\in\ZZ_\rho^k$,}\\
	u(x) &\text{otherwise in $D'$.}
	\end{cases}
	\end{equation}
As in~\eqref{est:limsup-L1norm} we find that $u_{\e,\rho}$ satisfies the $L^1$-estimate $\|u_{\e,\rho}-u\|_{L^1(D';\R^m)}\leq CN{\rm diam}(D')^{d-1}\rho$ and it remains to estimate its energy. By the definition of the sets $\mathcal{Z}_{\rho}^k(i,j)$ we know that on every possible cube $Q_{\rho}^{\nu_k}(x_{\rho}^{k,z})$ it holds that $u_{\e,\rho}=u$ in a neighborhood of $\partial Q_{\rho}^{\nu_k}(x_{\rho}^{k,z})$. Thus equation \eqref{eq:nojumponcubeboundary} ensures that $\Hd\big(S_{u_{\e,\rho}}\cap \partial Q_\rho^{\nu_k}(x_\rho^{k,z})\big)=0$ . Combined with \eqref{covering:jumpset-hyperplane} this implies that
	\begin{equation}\label{est:limsup-general1}
	E_\e(\omega)(u_{\e,\rho},D) \leq\sum_{k=1}^M\sum_{z\in\ZZ_\rho^k}E_\e(\omega)\big(u_{\e,\rho}^{k,z},Q_\rho^{\nu_k}(x_\rho^{k,z})\big)+c\sum_{k=2}^M\sum_{z\in\Z^{d-1}\sm\ZZ_\rho^k}\Hd\big(Q_\rho^{\nu_k}(x_\rho^{k,z})\cap D\cap S_u\big).
	\end{equation}
We start estimating the last term in~\eqref{est:limsup-general1}. For any $k\in\{2,\ldots,M\}$ and for any $z\in\Z^{d-1}\sm\ZZ_\rho^k$ with $Q_\rho^{\nu_k}(x_\rho^{k,z})\cap D\neq\emptyset$ there exist $m\in\{1,\ldots,k-1\}$ and $z'\in\ZZ_\rho^m$ such that $Q_\rho^{\nu_k}(x_\rho^{k,z})\cap Q_\rho^{\nu_m}(x_\rho^{m,z'})\neq\emptyset$. Since $\rho$ is smaller than the minimal distance between two parallel hyperplanes, we know that $H^{\nu_k}(x_k)$ and $H^{\nu_m}(x_m)$ are not parallel and we can apply Lemma \ref{l.closeto_d-2} . In fact, choosing $\hat x\in Q_\rho^{\nu_k}(x_\rho^{k,z})\cap Q_\rho^{\nu_m}(x_\rho^{m,z'})$ we have for every $x\in Q_\rho^{\nu_k}(x_\rho^{k,z})$
	\begin{equation}\label{est:dist-hyperplane}
	\dist(x,H^{\nu_m}(x_m))\leq |x-\hat{x}|+|\hat{x}-x_{\rho}^{m,z'}|\leq 2\sqrt{d}\rho\quad\text{and}\quad\dist(x,H^{\nu_k}(x_k))\leq |x-x_{\rho}^{k,z}|\leq\tfrac{\sqrt{d}}{2}\rho.
	\end{equation}
Since moreover $\sqrt{d}\rho<\dist(D,\partial D')$, we deduce from Lemma~\ref{l.closeto_d-2} that 
	\begin{equation}\label{inclusion:near-hyperplane}
	Q_\rho^{\nu_k}(x_\rho^{k,z})\subset\big\{x\in D'\colon\dist(x;H^{\nu_k}(x_k)\cap H^{\nu_m}(x_m)\leq 2\sqrt{d}c(\nu_k,\nu_m)\rho\big\}.
	\end{equation}
%
The collection of all such cubes is thus contained in a $\rho$-neighborhood of $H^{\nu_k}(x_k)\cap\bigcup_{m=1}^{k-1} H^{\nu_m}(x_m)\cap D'$. In addition, we have $\Hd\big(Q_\rho^{\nu_k}(x_\rho^{k,z})\cap S_u\big)\leq C\rho^{d-1}= C\frac{|Q_\rho^{\nu_k}(x_\rho^{k,z})|}{\rho}$, since $S_u$ is polyhedral. Thus, setting $\hat{c}\defas2\sqrt{d}\max\{c(\nu_k,\nu_m)\colon\nu_k,\nu_m\text{ linearly independent}\}<+\infty$ and recalling that the  collection of cubes is pairwise disjoint we obtain
	\begin{equation}\label{est:limsup-rho-term1}
	\begin{split}
	\sum_{z\in\Z^{d-1}\sm\ZZ_\rho^k}\Hd\big(Q_\rho^{\nu_k}(x_\rho^{k,z})\cap D\cap S_u\big)&\leq \frac{C}{\rho}\sum_{m=1}^{k-1}\big|\{x\in D'\colon\dist(x,H^{\nu_k}(x_k)\cap H^{\nu_m}(x_m))<\hat{c}\rho\}\big|\leq C\rho.
	\end{split}
	\end{equation}
It remains to estimate the first term on the right-hand side of~\eqref{est:limsup-general1}. For every $k\in\{1,\ldots,M\}$ we deduce from~\eqref{est:limsup-minimiser} that
	\begin{equation}\label{est:limsup-general2}
	\begin{split}
	\sum_{z\in\ZZ_\rho^k}E_\e(\omega)\big(u_{\e,\rho}^{k,z},Q_\rho^{\nu_k}(x_\rho^{k,z})\big) &\leq \dsum{(i,j)\in\{1,\ldots,K\}^2}{i\neq j}\#\ZZ_\rho^k(i,j)\big(\rho^{d-1}g_{\rm hom}(\omega,a_i,a_j,\nu_k)+\rho^d\big)\\
	&\hspace*{1em} +c\sum_{z\in\ZZ_\rho^{k,{\rm co}}}\Hd\big(S_u\cap Q_\rho^{\nu_k}(x_\rho^{k,z})\big).
	\end{split}
	\end{equation}
We claim that also the last term in~\eqref{est:limsup-general2} vanishes with $\rho$. This can be seen as follows: let $z\in\ZZ_\rho^{k,{\rm co}}$; if $|Q_\rho^{\nu_k}(x_\rho^{k,z})\sm A_i|=0$ for some $i\in\{1,\ldots,K\}$, then $\Hd\big(S_u\cap Q_\rho^{\nu_k}(x_\rho^{k,z})\big)=0$, hence $z$ does not contribute to~\eqref{est:limsup-general2}. Otherwise we know that either $|Q_\rho^{\nu_k,+}(x_\rho^{k,z})\sm A_i|>0$ for all $i\in\{1,\ldots,K\}$ or $|Q_\rho^{\nu_k,-}(x_\rho^{k,z})\sm A_i|>0$ for all $i\in\{1,\ldots,K\}$. We assume without loss of generality that the first situation holds and we choose $i\in\{1,\ldots K\}$ with $|Q_\rho^{\nu_k,+}(x_\rho^{k,z})\cap A_i|>0$. An application of the relative isoperimetric inequality then yields
	\begin{equation*}
	0<\min\Big\{\big|Q_\rho^{\nu_k,+}(x_\rho^{k,z})\cap A_i\big|,\big|Q_\rho^{\nu_k,+}(x_\rho^{k,z})\sm A_i\big| \Big\}^\frac{d-1}{d}\leq C\Hd\big(Q_\rho^{\nu_k,+}(x_\rho^{k,z})\cap\rb A_i\big).
	\end{equation*}
In view of~\eqref{covering:jumpset-hyperplane} this in turn implies that $\Hd\big(Q_\rho^{\nu_k,+}(x_\rho^{k,z})\cap H^{\nu_m}(x_m)\big)>0$ for some hyperplane $H^{\nu_m}(x_m)$ not parallel to $H^\nu_k(x_k)$. Arguing as in~\eqref{est:dist-hyperplane} and using Lemma~\ref{l.closeto_d-2} we see that $Q_\rho^{\nu_k}(x_\rho^{k,z})$ satisfies~\eqref{inclusion:near-hyperplane}. Thus, as in~\eqref{est:limsup-rho-term1} we conclude that
	\begin{equation}\label{est:limsup-rho-term2}
	\sum_{z\in\ZZ_\rho^{k,{\rm co}}}\Hd\big(S_u\cap Q_\rho^{\nu_k}(x_\rho^{k,z})\big)\leq C\rho.
	\end{equation}
Eventually, to estimate the first term in~\eqref{est:limsup-general2} we observe that
	\begin{equation}\label{est:cardinality-Zrkab}
	\begin{split}
	\rho^{d-1}\#\ZZ_\rho^k(i,j) &\leq \sum_{z\in\ZZ_\rho^k(i,j)}\Hd\big(Q_\rho^{\nu_k}(x_\rho^{k,z})\cap H^{\nu_k}(x_k)\big)\\
	&=\sum_{z\in\ZZ_\rho^k(i,j)}\Hd\big(Q_\rho^{\nu_k}(x_\rho^{k,z})\cap H^{\nu_k}(x_k)\cap\rb A_i\cap\rb A_j\big)\\
	&\leq\Hd\big(D'\cap H^{\nu_k}(x_k)\cap\rb A_i\cap\rb A_j\big),
	\end{split}
	\end{equation}
where to obtain the second equality we used~\eqref{eq:jumpset-Qrkz}, while the last estimate follows since $Q_\rho^{\nu_k}(x_\rho^{k,z})\subset D'$. Eventually, summing up~\eqref{est:cardinality-Zrkab} in~\eqref{est:limsup-general2} and combining it with~\eqref{est:limsup-general1} and~\eqref{est:limsup-rho-term1}-\eqref{est:limsup-rho-term2} we infer
	\begin{equation}\label{est:limsup-final}
	\begin{split}
	E_\e(\omega)(u_{\e,\rho},D) &\leq\sum_{k=1}^M\dsum{(i,j)\in\{1,\ldots,K\}^2}{i\neq j}\int_{D'\cap\rb A_i\cap\rb A_j\cap H^{\nu_k}(x_k)}g_{\rm hom}(\omega,a_i,a_j,\nu_k)\dHd+C\rho\\
	&=\sum_{k=1}^M\int_{D'\cap S_u\cap H^{\nu_k}(x_k)}g_{\rm hom}(\omega,u^+,u^-,\nu_u)\dHd+C\rho,
	\end{split}
	\end{equation}
where the last equality follows from~\cite[Proposition 5.9]{AFP} together with~\eqref{eq:jumpset-Qrkz}. Gathering~\eqref{covering:jumpset-hyperplane} and~\eqref{est:limsup-final} we conclude again by a diagonal argument.
\end{proof}

\subsection{Proof with boundary conditions}
\begin{proof}[Proof of Theorem \ref{thm:boundary}]
The proof of Theorem~\ref{thm:boundary} will be carried out in several steps.

\smallskip
\textbf{Step 1.} We construct a suitable neighborhood of $\Gamma$ and a suitable function attaining the boundary datum in this neighborhood. This preliminary step will be essential both for the lower and the upper bound.
We start constructing the neighborhood of $\Gamma$. Let $\eta>0$ be arbitrary. Since we assume that the relative boundary of $\Gamma$ is $\mathcal{H}^{d-1}$-negligible, also the spherical Hausdorff-measure of $\Gamma_{\rm rel}$ vanishes. Hence, for any $\delta>0$ there exist countably many balls $
B_{r_{i}}(z_i)$ with $r_i<\delta$ that cover $\partial_{\rm rel}\Gamma$ and such that
\begin{equation}\label{eq:goodcovering}
	\sum_{i=1}^{\infty}r_i^{d-1}\leq\eta.
\end{equation} 
In particular, choosing $\delta\leq\eta$ we can ensure that $r_i<\eta$. The Lipschitz regularity and compactness of $\partial D$ allow us to further assume that
\begin{equation}\label{eq:Lipschitzconcentration}
	\mathcal{H}^{d-1}(\partial D\cap B_{r_i}(z_i))<Cr_i^{d-1},\qquad \mathcal{H}^{d-1}(\partial D\cap\partial B_{r_i}(z_i))=0.
\end{equation}
for some constant $C$ only depending on $\partial D$.
For $x\in\Gamma$, we further choose balls $B_{r_x}(x)$ such that $r_x<\eta$ and $B_{r_x}(x)\cap\partial_{\rm rel}\Gamma=\emptyset$, which is possible since $\Gamma$ is relatively open in $\partial D$. Moreover, again due to the fact that $\mathcal{H}^{d-1}\LL\partial D$ is a finite measure we can ensure that $\mathcal{H}^{d-1}(\partial D\cap \partial B_{r_x}(x))=0$. Since $\overline{\Gamma}=\Gamma\cup\partial_{\rm rel}\Gamma$, we can use the balls constructed above and the compactness of $\overline{\Gamma}$ to find a finite subcover $\{B_{r_i}(z_i)\}_{i=1}^N\cup\{B_{r_{x_j}}(x_j)\}_{j=1}^N$ that still covers $\overline{\Gamma}$. Let $U_{\eta}$ be the union of these balls, so that $U_{\eta}$ is an open neighborhood of $\overline{\Gamma}$ and a set of finite perimeter. Moreover, since the boundary of all balls has negigible intersection with $\partial D$, it holds that
\begin{equation}\label{eq:partialZ0}
	\mathcal{H}^{d-1}(\partial U_{\eta}\cap\partial D)=0.
\end{equation}
Next, given $u\in BV(D;\M)$, the Lipschitz regularity of $D$ allows us to use \cite[Lemma 2.7]{BCG} to find an extension $\widetilde{u}\in BV_{\rm loc}(D;\M)$ of $u$ such that $|D\widetilde{u}|( \partial D)=0$. We finally define the $BV_{\rm loc}(\R^d;\M)$-function 
\begin{equation}\label{eq:defv_eta}
	v_{\eta}(u)=u_0\mathds{1}_{U_{\eta}}+\widetilde{u}(1-\mathds{1}_{U_{\eta}}).
\end{equation}
Since $U_\eta$ is a neighborhood of $\Gamma$, we have that $v_{\eta}(u)=u_0$ in a neighborhood of $\Gamma$. Moreover, by \cite[formula (3.81)]{AFP} and \eqref{eq:partialZ0} we have
\begin{equation}\label{eq:nomassonpartialD}
	|Dv_{\eta}(u)|(\partial D)\leq |Du_0|(\partial D)+|D\widetilde{u}|(\partial D)+C\mathcal{H}^{d-1}(\partial U_{\eta}\cap\partial D)=0.
\end{equation}
With this set $U_{\eta}$ and the construction of the function $v_{\eta}(u)$ at hand, we can now turn to the actual proof of Theorem \ref{thm:boundary}.

\smallskip
\textbf{Step 2.} We prove the $\liminf$-inequality. To this end it suffices to consider a sequence $u_{\e}\to u$ in $L^1(D;\R^m)$ such that $\liminf_{\e}E_{\e}(\w)(u_{\e},D)<+\infty$. Passing to a subsequence, we can further assume that the $\liminf$ is actually a limit and therefore $u_{\e}\in BV_{\Gamma,u_0}(D;\M)$. Since $g(\w)\in\A_c$, we deduce that $u\in BV(D;\M)$. We extend $u_{\e}$ to $U_{\eta}\setminus D$ by $u_0$ and call this extension $v_{\e}$. Then $v_{\e}$ converges in $L^1(D\cup U_{\eta};\R^m)$ to the function $u\mathds{1}_D+u_0\mathds{1}_{U_{\eta}\setminus D}$. Its energy can be bounded by
\begin{equation}\label{eq:split}
	E_{\e}(\w)(v_{\e},D\cup U_\eta)\leq E_{\e,\Gamma,u_0}(\w)(u_{\e},D)+E_{\e}(\w)(v_{\e},U_\eta)\leq E_{\e,\Gamma,u_0}(\w)(u_{\e},D)+C|Dv_{\e}|(U_\eta\setminus D).
\end{equation}
In order to estimate the term $|Dv_{\e}|(U_\eta\setminus D)$, we write $U_{\eta}\setminus D=(U_{\eta}\cap\partial D) \cup (U_{\eta}\setminus\overline{D})$. On the open set $U_{\eta}\setminus\overline{D}$, we have $v_{\e}=u_0$, so that 
\begin{equation*}
	|Dv_{\e}|(U_{\eta}\setminus D)= |Du_0|(U_{\eta}\setminus\overline{D})+|Dv_{\e}|(U_{\eta}\cap\partial D).
\end{equation*}
We decompose the set $U_{\eta}\cap\partial D$ into the intersection of $\partial D$ with the balls $B_{r_{x_j}}(x_j)$ and with the balls $B_{r_i}(z_i)$ (cf. Step 1).
Since the first family of balls does not intersect $\Gamma_{\rm rel}$, we know that $B_{r_{x_j}}(x_j)\cap\partial D\subset\Gamma$ and on $\Gamma$ the functions $u_{\e}$ and $u_0$ have the same trace. Therefore $|Dv_{\e}|$ does not have mass on those balls. For the remaining balls $B_{r_i}(z_i)$ we use the bounds in \eqref{eq:Lipschitzconcentration} and \eqref{eq:goodcovering} to find that
\begin{equation*}
	|Dv_{\e}|\left(\bigcup_{i=1}^N B_{r_i}(z_i)\cap \partial D\right)\leq C\sum_{i=1}^N\mathcal{H}^{d-1}(B_{r_i}(z_i)\cap\partial D)\leq C\sum_{i=1}^Nr_i^{d-1}\leq C\eta.
\end{equation*}
Hence we obtain the bound
\begin{equation*}
	|Dv_{\e}|(U_{\eta}\setminus D)\leq |Du_0|(U_{\eta}\setminus\overline{D})+C\eta.
\end{equation*}
Inserting this $\e$-independent estimate into \eqref{eq:split} and using Proposition \ref{prop.liminf} on the open set $D\cup U_{\eta}$ (cf. Remark \ref{r.justopen}), we find as $\e\to 0$ the inequality
\begin{align*}
	\int_{D\cap S_u}g_{\rm hom}(\w,u^+,u^-,\nu_u)\dHd+\int_{\Gamma}g_{\rm hom}(\w,u|_{\partial D},u_0|_{\partial D},\nu_{\Gamma})\dHd\leq &\liminf_{\e\to 0}E_{\e,\Gamma,u_0}(\w)(u_{\e},D)
	\\
	& \quad+C|Du_0|(U_{\eta}\setminus\overline{D})+C\eta,
\end{align*}
where on the left-hand side we neglected the non-negative energy contribution on $U_{\eta}\setminus (D\cup\Gamma)$. Since $U_{\eta}\subset\Gamma+B_{2\eta}(0)$ by the bound on the radii, the terms in the second line vanish as $\eta\to 0$ and we obtain the claimed lower bound.

\smallskip
\textbf{Step 3.} As an intermediate step towards the proof of the $\limsup$-inequality we construct for any $u\in BV(D;\M)$ a sequence $(u_n)\subset BV(D;\M)$ with $u_n\to u$ in $L^1(D;\R^m)$ and $u_n=u_0$ in a neighbourhood of $\Gamma$ and such that
\begin{equation}\label{eq:density}
	E_{{\rm hom},\Gamma,u_0}(\w)(u,D)=\lim_{n\to +\infty}E_{{\rm hom},\Gamma,u_0}(\w)(u_n,D)=\lim_{n\to+\infty}\int_{D\cap S_{u_n}}g_{\rm hom}(\omega,u_n^+,u_n^-,\nu_{u_n})\dHd.
\end{equation}
To this end, for $u\in BV(D;\M)$ let its extension $\widetilde{u}$ and the function $v_{\eta}(u)$ be as in Step 1. Moreover, let $D'$ be an open set with Lipschitz boundary such that $D\subset\subset D'$. Since $|Dv_{\eta}(u)|(\partial D)=0$ (see \eqref{eq:nomassonpartialD}), we can use \cite[Lemma B.1]{BCR} to find a sequence of sets $D_k\subset\subset D$ with finite perimeter such that
\begin{align*}
	u_{k,\eta}:=\mathds{1}_{D_k}\widetilde{u}+(1-\mathds{1}_{D_k})v_{\eta}(u)&\to u_{\eta}:= \mathds{1}_{D}\widetilde{u}+(1-\mathds{1}_{D})v_{\eta}(u)\quad \text{ in }L^1(D';\R^m),
	\\
	\mathcal{H}^{d-1}(S_{u_{k,\eta}}\cap D')&\to \mathcal{H}^{d-1}(S_{u_{\eta}}\cap D')
\end{align*}
as $k\to +\infty$. Due to \cite[Theorem 3.1]{R18ACV} these two limits imply that
\begin{equation*}
	\lim_{k\to +\infty}E_{\rm hom}(\w)(u_{k,\eta},D')=E_{\rm hom}(\w)(u_{\eta},D').
\end{equation*}
Since $E_{\rm hom}(\w)(\cdot,D)$ is $L^1(D)$-lower semicontinuous, the integrand $g_{\rm hom}(\w)$ is $BV$-elliptic and therefore $E_{\rm hom}(\w)(\cdot,U)$ is also $L^1(U)$-lower semicontinuous on all bounded, open sets $U\subset\R^d$ (cf. \cite[Theorem 2.1]{AmBrII}). Hence we deduce that
\begin{align*}
	\limsup_{k\to +\infty}E_{\rm hom}(\w)(u_{k,\eta},D)&\leq\lim_{k\to +\infty}E_{\rm hom}(\w)(u_{k,\eta},D')-\liminf_{k\to+\infty}E_{\rm hom}(\w)(u_{k,\eta},D'\setminus \overline{D})
	\\
	&\leq E_{\rm hom}(\w)(u_{\eta},D')-E_{\rm hom}(\w)(u_{\eta},D'\setminus \overline{D})=E_{\rm hom}(\w)(u_{\eta},\overline{D})
	\\
	&=\int_{S_u\cap D}g_{\rm hom}(\w,u^+,u^-,\nu_u)\dHd+\int_{\partial D}g_{\rm hom}(\w,\widetilde{u}^+,v_{\eta}(u)^-,\nu_{\partial D})\dHd.
\end{align*}
Let us rewrite the last integral. Since $\widetilde{u}$ extends $u$, we know that $\widetilde{u}^+=u|_{\partial D}$ on $\partial D$. From the definition of $v_{\eta}(u)$ in \eqref{eq:defv_eta} and the fact that $|D\widetilde{u}|(\partial D)=0$, we deduce that
\begin{align*}
	v_{\eta}(u)^-=\widetilde{u}^-=\widetilde{u}^+\qquad\mathcal{H}^{d-1}\text{-a.e. on }\partial D\setminus \overline{U_{\eta}},
	\\
	v_{\eta}(u)^-=u_0^-\qquad\mathcal{H}^{d-1}\text{-a.e. on }\partial D\cap U_{\eta}.
\end{align*}
Taking into account also \eqref{eq:partialZ0} and that $|Du_0|(\partial D)=0$, this implies that
\begin{equation*}
	\int_{\partial D}g_{\rm hom}(\w,\widetilde{u}^+,v_{\eta}(u)^-,\nu_{\partial D})\dHd=\int_{\partial D\cap U_{\eta}}g_{\rm hom}(\w,u|_{\partial D},u_0|_{\partial D},\nu_{\partial D})\dHd.
\end{equation*}
The sets $U_{\eta}$ shrink to $\overline{\Gamma}$ when $\eta$ tends to zero as it is the covering of $\overline{\Gamma}$ made of balls with radius less than $\eta$ and each containing a point from $\overline{\Gamma}$. Hence, letting $\eta\to 0$ and using that $\mathcal{H}^{d-1}(\partial_{\rm rel}\Gamma)=0$, the dominated convergence theorem yields that 
\begin{align*}
	&\limsup_{\eta\to 0}\limsup_{k\to +\infty}\int_{S_{u_{k,\eta}}\cap D}g_{\rm hom}(\w,u_{k,\eta}^+,u_{k,\eta}^-,\nu_{u_{k,\eta}})\dHd
	\\
	&\leq\int_{S_u\cap D}g_{\rm hom}(\w,u^+,u^-,\nu_u)\dHd+\int_{\Gamma}g_{\rm hom}(\w,u|_{\partial D},u_0|_{\partial D},\nu_{\Gamma})\dHd
	=E_{{\rm hom},\Gamma,u_0}(\w)(u,D).
\end{align*}
Now recall that $U_{\eta}$ is an open neighborhood of $\Gamma$ and that $u_{k,\eta}=v_{\eta}(u)$ in a neighborhood of $\partial D$. Recalling the definition of $v_{\eta}(u)$ in \eqref{eq:defv_eta}, it follows that $u_{k,\eta}=u_0$ in an open neighborhood of $\Gamma$. Hence the left-hand side of the above inequality equals $E_{{\rm hom},\Gamma,u_0}(\w)(u_{k,\eta},D)$. Moreover, each sequence $u_{k,\eta}$ converges to $u$ in $L^1(D;\R^m)$. Using a diagonal argument along $\eta\to 0$ and $k\to+\infty$, we thus find a sequences $(u_n)\subset BV_{\Gamma,u_0}(D;\M)$ and $\eta_n\to 0$ such that $u_n=v_{\eta_n}(u)$ in a neighborhood of $\partial D$, $u_n\to u$ in $L^1(D;\R^m)$ and such that~\eqref{eq:density} is satisfied.

\smallskip
\textbf{Step 4.} We establish the $\limsup$-inequality for each $u_n$, which then yields the claim due to the lower semicontinuity of the abstract $\Gamma$-$\limsup$. So fix $u_n$ and choose open sets $D_1\subset\subset D_2\subset\subset D$ such that $u_n=v_{\eta_n}(u)$ on $D\setminus \overline{D_1}$. Consider further a sequence $u_{\e,n}$ such that $u_{\e,n}\to u_n$ in $L^1(D;\R^m)$ and 
\begin{equation*}
	\limsup_{\e\to 0}E_{\e}(\w)(u_{\e,n},D)\leq E_{\rm hom}(\w)(u_n,D)=E_{{\rm hom},\Gamma,u_0}(\w)(u_n,D)
\end{equation*}
provided by Proposition~\ref{prop:limsup}.
Let us apply Lemma \ref{l.fundamental_est} with the sets $D_1,D_2$ and $D$ and the functions $u_{\e,n}$ and $v_{\eta_n}(u)$. We obtain a new sequence $w_{\e}\in BV(D;\M)$ such that $w_{\e,n}=u_{\e,n}$ on $D_1$, $w_{\e,n}=v_{\eta_n}(u)$ on $D\setminus D_2$, $w_{\e,n}(x)\in\{u_{\e,n}(x),v_{\eta_n}(u)(x)\}$ a.e. in $D$ and 
\begin{align*}
	E_{\e}(\w)(w_{\e,n},D)&\leq E_{\e}(\w)(u_{\e,n},D_2)+E_{\e}(\w)(v_{\eta_n}(D\setminus\overline{D_1})+C\dist(D_1,\partial D_2)\left|D_2\setminus\overline{D_1}\cap\{u_{\e,n}\neq v_{\eta_n}(u)\}\right|
	\\
	&\leq E_{\e}(\w)(u_{\e,n},D)+C |Dv_{\eta_n}|(D\setminus \overline{D_1}))+C\dist(D_1,\partial D_2)\int_{D_2\setminus \overline{D_1}}|u_{\e,n}-v_{\eta_n}(u)|\dx.
\end{align*} 
Since $u_n=v_{\eta_n}(u)$ on $D\setminus\overline{D_1}$, the convergence $u_{\e,n}\to u_n$ implies that $w_{\e,n}\to u_n$ in $L^1(D;\R^m)$ as well. Moreover, again by the choice of $D_1$ the last integral in the above estimate vanishes as $\e\to 0$. Finally, by construction we have that $w_{\e,n}\in BV_{\Gamma,u_0}(D;\M)$ and therefore the choice of $u_{\e,n}$ yields that
\begin{equation*}
	\limsup_{\e\to 0}E_{\e,\Gamma,u_0}(\w)(w_{\e,n},D)\leq E_{{\rm hom},\Gamma,u_0}(\w)(u_n,D)+C |Dv_{\eta_n}|(D\setminus \overline{D_1}).
\end{equation*}
Now letting $D_1\uparrow D$ the last term vanishes since $|Dv_{\eta_n}(u)|$ is a finite measure. This yields the $\limsup$-inequality for $u_n$ and we conclude the proof.
\end{proof}

\subsection*{Acknowledgments}
A.\ Bach has received funding from the European Union's Horizon research and innovation programme under the Marie Sk\l odowska-Curie grant agreement No 101065771.

\appendix

\section{Measurability of the stochastic process}
\begin{proposition}\label{prop:measurable}
	Let $g:\Omega\times\R^d\times\mathcal{M}\times\mathcal{M}\times\mathbb{S}^{d-1}\to [0,+\infty)$ be as in Definition \ref{defadmissible}, $A\subset\R^d$ be a bounded, open set with Lipschitz boundary and $v\in BV_{\rm loc}(\R^d;\M)$. Then the map $\w\mapsto\m_1(\w)(v,A)$ defined in \eqref{def:min-problem} is $\mathcal{F}$-measurable.
\end{proposition}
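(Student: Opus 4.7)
The plan is to reduce the measurability of $\omega\mapsto\m_1(\omega)(v,A)$ to \cite[Proposition A.1]{CDMSZ19}, which proves the analogous statement under the assumption that the surface integrand depends on the pair $(u^+,u^-)$ only through the jump vector $u^+-u^-$. I would carry out this reduction in two steps: first I would relabel $\M$ so that its elements become linearly independent, and then I would reparametrize the integrand accordingly.

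For the first step, since $\M=\{a_1,\dots,a_K\}$ is finite, I would fix any bijection $\phi\colon\M\to\{e_1,\dots,e_K\}\subset\R^K$ onto the standard basis and define a new integrand $g'(\omega,x,a,b,\nu)\defas g(\omega,x,\phi^{-1}(a),\phi^{-1}(b),\nu)$. The map $u\mapsto \phi\circ u$ is then a bijection between $BV(A;\M)$ and $BV(A;\phi(\M))$ that preserves the jump set, the normal, and the boundary condition defining $\Adm(v,A)$, and that transforms $(u^+,u^-)$ into $(\phi\circ u^+,\phi\circ u^-)$. Hence $\m_1(\omega)(v,A)$ equals the corresponding infimum for $g'$ and $\phi\circ v$, and I may henceforth assume $\M=\{e_1,\dots,e_K\}$.

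For the second step, the linear independence of $e_1,\dots,e_K$ ensures that the identity $e_i-e_j=e_k-e_\ell$ with $i\neq j$ and $k\neq\ell$ forces $(i,j)=(k,\ell)$. Therefore the map $(a,b)\mapsto a-b$ is a bijection from $\{(e_i,e_j)\colon i\neq j\}$ onto the finite set $\M''\defas\{e_i-e_j\colon i\neq j\}$, and I would define $\tilde g\colon\Omega\times\R^d\times\M''\times\Sd\to[0,+\infty)$ by setting $\tilde g(\omega,x,c,\nu)\defas g(\omega,x,a,b,\nu)$, where $(a,b)$ is the unique pair with $a-b=c$. Because $\M''$ is finite and $g$ is jointly measurable, $\tilde g$ inherits joint measurability, the bounds of $\A_c$ and the symmetry $\tilde g(\omega,x,c,\nu)=\tilde g(\omega,x,-c,-\nu)$. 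Crucially, for every $u\in BV(A;\M)$ the energy takes the form
$$E_1(\omega)(u,A)=\int_{A\cap S_u}\tilde g\big(\omega,x,u^+(x)-u^-(x),\nu_u(x)\big)\dHd(x),$$
which places $\m_1(\omega)(v,A)$ into the setting of \cite[Proposition A.1]{CDMSZ19}; applying that result to $\tilde g$ and $v$ would then conclude the proof.

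The genuine technical work — approximating the $BV$-infimum by a countable one despite the lack of continuity of $E_1(\omega)(\cdot,A)$ in $L^1$ — is already carried out in \cite{CDMSZ19}, so the only real obstacle here is conceptual rather than computational: without the basis relabeling one cannot in general recover $(u^+,u^-)$ from the single vector $u^+-u^-$, since the difference may coincide for different pairs of values in the original $\M$, and consequently $g$ cannot be expressed in the form required by \cite{CDMSZ19}. It is precisely to circumvent this ambiguity that I first pass to a target set whose elements are linearly independent, after which the rewriting is trivially measurable because it amounts to a relabeling of finitely many points.
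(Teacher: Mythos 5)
Your proposal is correct and follows the same reduction as the paper: both pass to $\M=\{e_1,\dots,e_K\}$ so that $(u^+,u^-)$ is uniquely determined by the jump $u^+-u^-$, allowing the integrand to be rewritten as a function of the difference, exactly as the paper's own introduction advertises. The only (minor) divergence is in the final step: you cite \cite[Proposition A.1]{CDMSZ19} directly, whereas the paper's appendix writes out a self-contained argument based on the joint-measurability statement \cite[Lemma A.9]{CDMSZ22} and the measurable projection theorem --- presumably because the former is formulated for $SBV$ rather than $BV(A;\M)$, so one would still need to note that the infimum can be restricted to a complete separable metric space of partitions with uniformly bounded total variation, a detail your sketch implicitly delegates to the cited result.
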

\begin{proof}
Without loss of generality we can assume that $\M=\M':=\{e_1,\ldots,e_k\}\subset\R^k$ for $k=\#\M$. Indeed, otherwise write $\M=\{m_1,\ldots,m_k\}$ and consider the bijection $T:\M\to\M'$ defined by $T(m_i)=e_i$ and the new integrand $\widetilde{g}(\w,x,e_i,e_j,\nu):=g(\w,x,T^{-1}(e_i),T^{-1}(e_j),\nu)$, which satisfies
\begin{equation*}
	\m_1(\w,v,A)=\inf\left\{\int_{A\cap S_u}\widetilde{g}(\w,x,u^+,u^-,\nu_u)\,\mathrm{d}\mathcal{H}^{d-1}:\,u\in BV(A;\mathcal{M}'),\,u=T\circ v\text{ near }\partial A\right\}.
\end{equation*}
Note that $\widetilde{g}$ satisfies all the properties in Definition \ref{defadmissible} and that $T\circ v\in BV_{\rm loc}(\R^d;\M')$. Hence from now on we assume that $\M=\M'$. Consider the set $(\M-\M)\setminus\{0\}=\{e_i-e_j:\,i\neq j,\,1\leq i,j\leq k\}$. Note that each element in $(\M-\M)\setminus\{0\}$ has exactly one representation as $e_i-e_j$ with $i\neq j$. Hence we can unambiguously define a map $h:\Omega\times\R^d\times\R^{k}\times\mathbb{S}^{d-1}\to [0,+\infty)$ setting
\begin{equation*}
	h(\w,x,u,\nu)=\begin{cases}
		g(\w,x,e_i,e_j,\nu) &\mbox{if $u=e_i-e_j\in(\M-\M)\setminus\{0\}$,}
		\\
		0 &\mbox{otherwise.}
	\end{cases}
\end{equation*}
Since $(\M-\M)\setminus\{0\}$ is finite, the joint measurability of $g$ implies that $h$ is $\mathcal{F}\times\mathcal{B}(\R^d)\times\mathcal{B}(\R^{k})\times \mathcal{B}(\mathbb{S}^{d-1})$-measurable. Moreover, the symmetry condition \eqref{cond:Ac} implies that $h(\w,x,-u,-\nu)=h(\w,x,u,\nu)$. Finally, since $\M$ is a finite set the uniform boundedness of $g$ implies that $|h(\w,x,u,\nu)|\leq C |u|$ for a finite constant $C>0$. Hence we can apply \cite[Lemma A.9]{CDMSZ22} to deduce that for every $R>0$ the map
\begin{equation*}
	(\w,u)\to \int_{A\cap S_u}h(\w,x,u^+-u^-,\nu_u)\,\mathrm{d}\mathcal{H}^{d-1}
\end{equation*}
is $\mathcal{F}\otimes \mathcal{B}(BV_{R,A}^k)$-measurable, where $\mathcal{B}(BV_{R,A}^k)$ denotes the Borel $\sigma$-algebra on the compact metric space $BV_{R,A}^m$ defined as
\begin{equation*}
	BV_{R,A}^m:=\{u\in BV(A;\R^m):\,\|u\|_{L^1(A)}\leq R,\,|Du|(A)\leq R\}
\end{equation*}
equipped with the metric $d_{R,A}^m(u,v)=\|u-v\|_{L^1(A)}+d_{R,A}^{k\times d}(Du,Dv)$, where $d_{R,A}^{k\times d}$ is a distance that metrizes the weak$^*$-topology on the set of $\R^{k\times d}$-valued Radon measures with total variation bounded by $R$. For $u\in BV(A,\M)$ we have by construction that
\begin{equation*}
	\int_{A\cap S_u}h(\w,x,u^+-u^-,\nu_u)\,\mathrm{d}\mathcal{H}^{d-1}=\int_{A\cap S_u}g(\w,x,u^+,u^-,\nu_u)\,\mathrm{d}\mathcal{H}^{d-1}.
\end{equation*}
Moreover, the set $BV_{R,A}(\M):=\{u\in BV_{R,A}^m:\,u\in BV(A,\M)\}$ is a closed subspace of $BV_{R,A}(\M)$, so in particular also compact. Moreover, the map
\begin{equation*}
	(\w,u)\to \int_{A\cap S_u}g(\w,x,u^+,u^-,\nu_u)\,\mathrm{d}\mathcal{H}^{d-1}
\end{equation*}
is $\mathcal{F}\times \mathcal{B}(BV_{R,A}(\M))$-measurable. Next we rewrite the infimum problem in a more tractable way. Take a sequence of sets $(A_j)_{j\in\N}\subset\mathscr{A}$ such that $A_j\subset\subset A_{j+1}$ and $A=\bigcup_jA_j$. Then for $R$ large enough (depending only on $c$ and $|Dv|(A)$) and all $\w\in\Omega$ such that $g(\w,\cdot)\in\A_c$ we have
\begin{align}\label{eq:limitrep}
	\m_1(\w)(v,A)&=\lim_{j\to +\infty}\inf\left\{\int_{A\cap S_u}g(\w,x,u^+,u^-,\nu_u)\,\mathrm{d}\mathcal{H}^{d-1}:\,u\in BV(A;\mathcal{M}),\,u=v\text{ on }A\setminus A_j\right\}\nonumber
	\\
	&=\lim_{j\to +\infty}\inf\left\{\int_{A\cap S_u}g(\w,x,u^+,u^-,\nu_u)\,\mathrm{d}\mathcal{H}^{d-1}:\,u\in BV_{R,A}(\mathcal{M}),\,u=v\text{ on }A\setminus A_j\right\}.
\end{align}
Here we used that for $u\in BV(A;\M)$ and $g\in \A_c$ the energy and the total variation are comparable up to multiplicative constant, which gives a bound on on the total variation of almost minimizers by the total variation of $v$ on $A$. Finally, the constraint $u=v$ on $A\setminus A_j$ is closed in $BV_{R,A}(\M)$, so that the penalty function $P_j:BV_{R,A}(\M)\to\{0,+\infty\}$ defined by
\begin{equation*}
	P_j(u)=\begin{cases} 
		0 &\mbox{if $u=v$ on $A\setminus A_j$,}\\
		+\infty &\mbox{otherwise}
		\end{cases}
\end{equation*}
is $\mathcal{B}(BV_{R,A}(\M))$-measurable. We deduce that for every $t>0$ 
\begin{equation*}
	\left\{(\w,u)\in \Omega\times BV_{R,A}(\M):\,\int_{A\cap S_u}g(\w,x,u^+,u^-,\nu_u)\,\mathrm{d}\mathcal{H}^{d-1}+P_j(u)<t\right\}\in\F\otimes\mathcal{B}(BV_{R,A}(\M)).
\end{equation*}
Now we can use the projection theorem. Indeed, we assume that our probability space is complete and we know that $BV_{R,A}(\M)$ is a compact metric space and therefore complete and separable. Thus \cite[Theorem 1.136]{FoLe} implies that the projections of the above sub-level sets onto $\Omega$ are measurable. Said projections are given by
\begin{equation*}
	\left\{\w\in \Omega:\,\inf_{u\in BV_{R,A}(\M)}\int_{A\cap S_u}g(\w,x,u^+,u^-,\nu_u)\,\mathrm{d}\mathcal{H}^{d-1}+P_j(u)<t\right\}\in\mathcal{F},
\end{equation*} 
which in regard to the definition of $P_j$ and \eqref{eq:limitrep} proves that $\m_1(\w)(v,A)$ is almost surely given by the limit of measurable functions and the completeness of the probability space yields that $\m_1(\cdot)(v,A)$ is itself $\mathcal{F}$-measurable.
\end{proof}

\end{document}